\numberwithin{equation}{section}
\newtheorem{thm}{Theorem}[section]
\newtheorem{lemma}[thm]{Lemma}
\newtheorem{sublemma}[thm]{Sublemma}
\newtheorem{prop}[thm]{Proposition}
\newtheorem{cor}[thm]{Corollary}
{\theorembodyfont{\rmfamily}

\newtheorem{rmk}[thm]{Remark}
}
\newcommand{\qed}{\hfill \mbox{\raggedright \rule{.07in}{.1in}}}
\newenvironment{proof}{\vspace{1ex}\noindent{\bf
Proof}\hspace{0.5em}}{\hfill\qed\vspace{1ex}}
\newenvironment{pfof}[1]{\vspace{1ex}\noindent{\bf Proof of
#1}\hspace{0.5em}}{\hfill\qed\vspace{1ex}}
\newcommand{\R}{{\mathbb R}}
\newcommand{\Z}{{\mathbb Z}}
\newcommand{\D}{{\mathbb D}}
\newcommand{\N}{{\mathbb N}}
\renewcommand{\P}{{\mathbb P}}
\newcommand{\B}{{\mathcal B}}
\newcommand{\E}{\mathbb{E}}
\newcommand{\eps}{\varepsilon}
\newcommand{\ie}{{\em i.e.,} }
\DeclarePairedDelimiter\floor{\lfloor}{\rfloor}
\title{Limit theorems for wobbly interval intermittent maps}
\author{Douglas Coates
\thanks{Department of Mathematics, University of Exeter,
North Park Road, Exeter EX4 4QF, UK.
Email address: {\it dc485@exeter.ac.uk}}
\and Mark Holland
\thanks{Department of Mathematics, University of Exeter,
North Park Road, Exeter EX4 4QF, UK. Email address:
{\it M.P.Holland@exeter.ac.uk}}
\and Dalia Terhesiu 
\thanks{Department of Mathematics, University of Leiden, Niels Bohrweg 1,
2333 CA Leiden, The Netherlands. Email address: {\it daliaterhesiu@gmail.com}}
}
\begin{document}
\baselineskip=17pt
\maketitle

\abstract{We consider perturbations of interval maps with indifferent fixed points, which we refer to as wobbly  interval intermittent maps, for which stable laws for general H{\"o}lder
observables fail. We obtain limit laws for such maps and   H{\"o}lder
observables. These limit laws are similar to the classical semistable laws previously established for random processes.
One of the considered examples is an interval map with a countable number of 
discontinuities, and to analyse it we need to construct a Markov/Young tower. }

\section{Introduction and main results}

\subsection{Formulating the problem and statement of the main result}
\label{sec-infst}
In this paper we study the map
\begin{equation}\label{eq:nMPM}
 f_M(x) = \begin{cases}
        x(1 + M(x)x^\alpha),& \text{ if } x \in [0,\frac12];\\
        2x-1, & \text{ if } x \in (1/2,1].
        \end{cases}
\end{equation}
with $\alpha \in (0,1)$, for oscillatory functions $M$ to be described further
bellow. We start by recalling that when $M(x)\equiv 2^{\alpha}$, $f_M$ is the well known
Liverani-Saussol-Vaienti (LSV) map~\cite{LSV99}, which preserves an absolutely continuous w.r.t.\ Lebesgue invariant measure $\mu$.
When $\alpha\in (0,1)$ the measure $\mu$ can be rescaled to give an invariant probability measure, while for $\alpha\ge 1$,
the measure $\mu$ is infinite, $\sigma$ finite. In last two decades, several statistical properties for the LSV map have been established
in both the probability setting, including~\cite{LSV99, Young, Sarig02, Gouezel04, Gouezel04b}
and the infinite measure setting, including~\cite{ThalerZweimuller06, MT12, Gouezel11}. This list of references is by far not exhaustive and
several important references can be found in the above mentioned works.

 As relevant for the present work, we recall that in the probability case of~\eqref{eq:nMPM} with $M(x)\equiv 2^{\alpha}$, $\alpha\in (0,1)$, the following limit theorems have been established:
\begin{itemize}
\item[a)] If $\alpha\in (0,1/2)$, then the classical central limit theorem holds for H{\"o}lder observables on $[0,1]$ 
(see \cite{LSV99} and Young~\cite{Young}). If  $\alpha=1/2$
and $v$ is H{\"o}lder then: i) if  $v(0)=0$, the classical central limit theorem holds; ii)  if $v(0)\ne 0$,  then $v\notin L^2$ and we have convergence to the Gaussian $\mathcal{N}(0,1)$ but with
non standard normalisation. We refer to Gou{\"e}zel~\cite[Section 1.1.2]{ Gouezel04b}
for details.
\item[ b)] If $\alpha\in (1/2,1)$, the following two possibilities occur for H{\"o}lder observables $v: [0,1]\to\R$  with $\int v\, d\mu=0$ (see~\cite{ Gouezel04b}):
\begin{itemize}
\item[i)] If $v$ is $\nu$-H{\"o}lder with $\nu>\alpha-1/2$ and $v(0)=0$, then the classical central limit theorem holds for such $v$.
\item[ii)]  If $v(0)\ne 0$, then  the Birkhoff sum $S_n^{f_M} v(x)=\sum_{k=0}^{n-1}v(f_M^j(x))$ converges in distribution to a stable  law of index $\beta=1/\alpha$. For background on stable laws we refer to
~\cite{Feller} (see also~\cite[Section 1.1.2]{ Gouezel04b} for a brief summary).
\end{itemize}
\end{itemize}

In this work we study the type of limit laws that occur when the function  
$M:[0,1]\to\R$ appearing in \eqref{eq:nMPM} with $\alpha\in (0,1)$ is an oscillatory function 
and refer to  these maps as \emph{wobbly intermittent map}s. As clarified in Subsection~\ref{sec-mainres}, when $\alpha\in (1/2, 1)$, for the choices of $M=M_1, M=M_2$ introduced below 
(in~\eqref{eq.intermittent2} and~\eqref{eq:def-M-sine}), 
general  H{\"o}lder observables do not give rise to stable laws.  The main purpose of this paper is to understand what type
of limit theorems can be enjoyed by  H{\"o}lder observables in the setup of wobbly intermittent maps $f_{M_1}, f_{M_2}$  introduced below.

We focus on the wobbly map $f_{M_1}:[0,1]\to[0,1]$ defined  as in \eqref{eq:nMPM}  with $M=M_1: [0,1]\to (0,\infty)$ given by
\begin{equation}\label{eq.intermittent2}
M_1(x)=\frac{C_0}{2^{\{c^{-1}_1\log x\}}}, \text{ where } c_1>0\text{ and }C_0\text{ so that }
f_{M_1}\left(\left(1/2\right)^{-}\right)=1.
\end{equation}
Here and throughout $\{\cdot\}$ denotes the fractional part. 
When $c^{-1}_1\log 2<1$, $C_0=2^{\alpha-\frac{1}{c_1}\log 2}$.
This choice 
corresponds to $M_1$ satisfying a \emph{log-periodic} condition with period $e^{c_1}$, namely
$M_1(e^{c_1}x)=M_1(x)$ for all $x>0$.  Here and throughout we define $\{x\}=\{-x\}$ for $x<0$.

As shown in  Section~\ref{sec.first-tail}, for $M_1$ as in~\eqref{eq.intermittent2} and $\alpha\in (0,1)$,
the tail of the first return time of $f_{M_1}$ to $[1/2,1]$ is somewhat similar to the generalised St. Petersburg distribution with index $\beta=1/\alpha\in (1,2)$.
We recall that \emph{semistable} laws for i.i.d random variables $X, X_1, X_2\ldots$ with a common generalised St. Petersburg distribution $\P(X>y)= 2^{\{\beta\log_2 y\}} y^{-\beta}$ for $y\ge 2^{1/\beta}$ with  $\beta\in (0,2)$   have been obtained
by Cs{\"o}rg{\H{o}}~\cite{Csorgo} (see also~\cite{CM1, KC, PK1}).  For background on semistable laws we refer to Megyesi~\cite{M},  Cs{\"o}rg{\H{o}} and  Megyesi~\cite{CM1} as well as 
~\cite{Csorgo, KC, PK1} and references therein (see also Section~\ref{sec-sst} below for a brief introduction). 
It is known the induced map of the Gaspard-Wang map~\cite{GW88},
a linearised version of the map~\eqref{eq:nMPM}, is isomorphic to an i.i.d.\  process.  Limit laws of  \emph{infinite} measure preserving \emph{Gaspard Wang} maps with
a distribution of return times behaving similar to a \emph{St.\ Petersburg distribution} (of index $\beta\in (0,1)$)  have been obtained by Kevei and Terhesiu~\cite{KT18}
(in particular, see~\cite[Section 5]{KT18}). The present  Theorem~\ref{thm-infst}  for finite measure preserving wobbly maps is partially motivated by the results in~\cite{KT18}. We believe that a \emph{modified Theorem~\ref{thm-infst}} 
for \emph{infinite} measure preserving wobbly maps (so, $f_{M_1}$ as in \eqref{eq:nMPM}  with $M_1$ as in~\eqref{eq.intermittent2} and $\alpha\ge 1$)
can be obtained by a straightforward combination of the results in present paper with the ones in~\cite{KT18}, but here we {\emph{focus on the finite} measure case.

A simplified wobbly map, which as explained below induces with respect to first return to a Gibbs Markov map,  is $f_{M_2}:[0,1]\to[0,1]$ defined  as in \eqref{eq:nMPM}  with a differentiable  $M=M_2$ given by
\begin{equation}\label{eq:def-M-sine}
    M_2(x)= 
a\left(1+ b\sin\left(\frac{2\pi}{c_2}\log(x)\right)\right)
\end{equation}
where $a,c_2$ are positive real constants with $b<1/2$ so that $M_2$ is bounded and bounded away from zero
and $M_2$ is log periodic with period $e^{c_2}$, that is $M_2(e^{c_2}x)=M_2(x)$ for all $x>0$.


Throughout we let $\to^d$ stand for convergence 
in distribution with respect to the any measure that is absolutely continuous with respect to the $f_{M_1}, f_{M_2}$-invariant measure $\mu$. With these specified we provide the statement of the main result.
For the statement below we let $V_{1/\alpha}(c)$ be a random variable in the domain of a semistable law of index $1/\alpha\in (1,2)$ and parameter
$c>1$; for a precise definition of such random variable we refer to Section~\ref{sec-sst}. Also, throughout  $ \lfloor \, \rfloor$ stands for the integer part.

\begin{thm}
\label{thm-infst}Let $M_1$ and $M_2$ as in~\eqref{eq.intermittent2} and ~\eqref{eq:def-M-sine}.  Assume the setup of either $f_{M_1}$ or $f_{M_2}$ with 
 $\alpha\in (0,1)$.

 Let $v: [0,1]\to\R$ be  a H{\"o}lder observable. 
If either $\alpha\in (0,1/2)$ or  $\alpha\in [1/2, 1)$ and $v(0)=0$, then  the classical central limit theorem holds for $f_{M_1}$ and $f_{M_2}$. Also,
the following non-Gaussian limit theorem holds:
\begin{itemize}
\item[i)] Consider the setup of $f_{M_1}$ with $\alpha\in (1/2, 1)$.  
Suppose that $v$ is H{\"o}lder of exponent $\nu>\alpha-1/2$  so that $v(0)\ne 0$. 
Set $c=e^{\alpha c_1}$ and $k_n = \lfloor c^n \rfloor$. 
Then there exists a subsequence $n_r$  of $k_n$ so that as $r\to\infty$
\[
\frac{\sum_{j=0}^{n_r-1} v\circ f_{M_1}^{j}-B_{n_r}\int v\, d\mu}{A_{n_r}}\rightarrow^d W
\]
for some random variable $W$ with a non trivial distribution, where $A_{n_r}=(n_r)^{\alpha}$ and $B_{n_r}$ is so that $\lim_{r\to\infty}B_{n_r}/n_r$ exists.

\item[ii)] Consider the setup of $f_{M_2}$ and the same assumptions on $\alpha$ and $v$ as  in item i). Set $c=e^{\alpha c_2}$, $k_n = \lfloor c^n \rfloor$ and  $A_{k_n}=(k_n)^{\alpha}$.
 In this case we have the improved result
\begin{equation*}
\frac{\sum_{j=0}^{k_n-1}v\circ f_{M_2}^{j}-B_{k_n}\int v\, d\mu}{A_{k_n}}\rightarrow^d V_{1/\alpha}(c),\text{ as } n\to\infty,
\end{equation*} 
where $B_{k_n}$ is so that $\lim_{n\to\infty}B_{k_n}/k_n$ exists.
\end{itemize}
\end{thm}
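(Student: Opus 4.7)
The overall plan is to reduce to a first-return induced system on $Y=(1/2,1]$ and import known results about semistable domains of attraction for Birkhoff sums of nice dynamical systems. Let $\tau\colon Y\to\N$ be the first return time of $f_{M_i}$ to $Y$ and let $F_i=f_{M_i}^{\tau}$ be the induced map, with induced observable $\bar v(y)=\sum_{j=0}^{\tau(y)-1}v(f_{M_i}^j y)$. The first task, referred to in Section~\ref{sec.first-tail}, is to show that under the $F_i$-invariant probability $\mu_Y$ the tail satisfies
\[
\mu_Y(\tau>n) = n^{-1/\alpha}\,L(n),
\]
with $L$ a \emph{log-periodic} slowly varying function of period $e^{\alpha c_i}$ in the case of $f_{M_i}$. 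Combined with the fact that $v(0)\neq 0$ (so $\bar v\sim v(0)\tau$ on cylinders with large return time), one derives that $\bar v-\E_{\mu_Y}\bar v$ sits in the \emph{domain of geometric partial attraction} of a semistable law of index $1/\alpha$ and multiplier $c=e^{\alpha c_i}$ in the sense of Megyesi and Cs\"org\H o--Megyesi.

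In the $f_{M_2}$ case the smoothness of $M_2$ makes the induced map $F_2$ a (full-branch) Gibbs--Markov map, with $\bar v$ piecewise H\"older on the Markov partition and the partition sum being the sum of a $\mu_Y$-centred ergodic piece in $L^2$ plus a semistable contribution coming from the tail. For such observables the transfer operator machinery of Aaronson--Denker and Gou\"ezel gives the same distributional limits as for i.i.d. copies of $\bar v$, and by the classical Cs\"org\H o--Megyesi semistable limit theorem applied along the exact geometric subsequence $k_n=\lfloor c^n\rfloor$ one obtains
\[
A_{k_n}^{-1}\Bigl(S_{k_n}^{F_2}\bar v - k_n\E_{\mu_Y}\bar v\Bigr)\to^d V_{1/\alpha}(c).
\]
Finally, to go from the induced sums back to $S_n^{f_{M_2}}v$ I would invoke the Melbourne--T\"or\"ok/Zweim\"uller lifting theorem: writing $n=N_n(x)\tau_{N_n}(x)+r_n(x)$ with $N_n$ the number of returns up to time $n$, one has $N_{k_n}/k_n\to \mu(Y)$ and the remainder $r_n$ is negligible at scale $A_{k_n}=k_n^\alpha$ because $\alpha>1/2$; this yields the statement with a centring $B_{k_n}$ satisfying $B_{k_n}/k_n\to \mu(Y)\E_{\mu_Y}\bar v$.

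For $f_{M_1}$ the map $M_1$ is only piecewise constant and the induced map has countably many monotone branches but is not Gibbs--Markov in the natural return partition, so I would build a Young tower as indicated in the abstract: refine the partition so that the tower map becomes Gibbs--Markov, control the tail of the tower return time, and apply the same transfer-operator/semistable machinery. The loss is that the slowly varying factor $L$ associated with $M_1$ only satisfies the semistable relation $L(cx)=L(x)$ asymptotically along a sparser sequence, hence convergence along the full $k_n$ is not guaranteed; instead one extracts a subsequence $n_r$ of $k_n$ along which the partial-sum distributions $\mu_Y\bigl(A_{n_r}^{-1}(S_{n_r}^{F_1}\bar v-n_r\E\bar v)\le t\bigr)$ converge (using tightness together with a Helly-type compactness argument for the possible limits, as in Megyesi's characterisation of geometric partial attraction), obtaining a non-degenerate $W$. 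The lifting step back to $f_{M_1}$ is the same as in (ii).

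The classical CLT assertions in the ranges $\alpha\in(0,1/2)$, and $\alpha\in[1/2,1)$ with $v(0)=0$, are significantly easier: in these regimes the induced observable $\bar v$ lies in $L^2(\mu_Y)$ (with the borderline $\alpha=1/2$ handled via the H\"older exponent condition $\nu>\alpha-1/2$ to kill the logarithmic blow-up), so the CLT for Gibbs--Markov maps or for Young towers with summable decay of correlations applies directly, and the standard Zweim\"uller lifting passes it to $f_{M_i}$. The main obstacle is not the CLT branch but rather justifying the precise log-periodic tail of $\tau$ in the wobbly setting (this is where the specific form of $M_1,M_2$ enters) and, in the $f_{M_1}$ case, building a Young tower compatible with the countable family of discontinuities in such a way that the Aaronson--Denker type semistable limit survives after lifting.
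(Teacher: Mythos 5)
Your route for $f_{M_2}$ is a legitimate alternative to the one in the paper: the paper itself notes in Section~\ref{sec-mainres} that the Melbourne--T\"or\"ok pull-back works for $f_{M_2}$ (where $(f_{M_2})^{\tau}$ is already Gibbs--Markov), but chooses instead to run both examples through Gou\"ezel's operator-renewal scheme on the tower (Theorem~\ref{prop-main}, Proposition~\ref{prop-lthto}), which manipulates the Laplace transform $\E_{\mu_\Delta}\bigl(e^{-s S_n g}\bigr)$ of the \emph{original-map} Birkhoff sums directly via the operators $A_{n,s},B_{n,s},C_{n,s},T_{n,s}$ and so never has to transport a sparse subsequence between induced and original time.

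For $f_{M_1}$ your proposal has a genuine gap, and it is exactly the one the paper highlights. After reinducing on $Y$ to obtain a Gibbs--Markov map $F=f^{\varphi}$ with a \emph{non-first} return time $\varphi$, you propose to prove convergence of the induced Birkhoff sums along a subsequence $(n_r)$ of $(k_n)$ and then write ``the lifting step back to $f_{M_1}$ is the same as in (ii),'' i.e.\ Melbourne--T\"or\"ok. But the lifting converts a count of $F$-iterates into a count of $f$-iterates through the relation $n\approx \bar\varphi\,N_n$, and there is no reason that $\bar\varphi$ times a subsequence of $\lfloor c^n\rfloor$ lands again on a subsequence of $\lfloor c^n\rfloor$; this is precisely why the paper abandons the pull-back for $f_{M_1}$ in favour of the operator renewal scheme, which produces a limit for $S_{k_n}^{f}v$ without passing through induced time. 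You also misdiagnose \emph{why} only a subsequence result is obtained for $f_{M_1}$. It is not that the slowly varying factor attached to $M_1$ is log-periodic only along a sparser sequence: the paper shows in Proposition~\ref{prop-4.3} and Section~\ref{sec.h3-check} that $\mu_Y(\tau>n)$ satisfies the full semistable tail hypothesis (H3) along the entire sequence $k_n=\lfloor c^n\rfloor$. The obstruction comes from the reinduced time $\varphi$: because of the countable singularity set $\mathcal C$, Theorem~\ref{thm.singmap} only gives the one-sided bound $\mu_Y(\varphi>n)\le C n^{-1/\alpha}$ (hypothesis (H0)) and not the sharp asymptotic $\mu_Y(\varphi>n)\sim C\mu_Y(\tau>n)$ (hypothesis (H4)). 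Consequently $1-\lambda(1,s)$ is only \emph{sandwiched} between constant multiples of $\tilde G(s)$ in Lemma~\ref{lemma-1s}, and it is this sandwich, not any defect in the log-periodicity of $M_1$, that forces the extraction of a further subsequence in Theorem~\ref{prop-main} (cf.\ Remark~\ref{rmk-diffsemcl}). Your ``Helly-type compactness'' instinct is the right tool, but it must be applied at the level of the original-map Laplace transforms produced by the tower operators, not at the level of the induced sums followed by a Melbourne--T\"or\"ok lift.
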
 
Given Theorem~\ref{thm.singmap} and Remark~\ref{rmk-sin} below, we note that  the ranges of $\alpha$ and assumptions on $v$ which yield the central limit theorem for these wobbly maps,
are essentially the same as in standard case~\eqref{eq:nMPM}, (i.e. with 
$M(x)\equiv 2^{\alpha}$) and the corresponding proof goes through in exactly the same
way (as in the works acknowledged in a)--b) above).
The proof of Theorem~\ref{thm-infst} for ranges of $\alpha$  and conditions on $v$ that do \emph{not}  give rise to the central limit theorem is deferred to Section~\ref{sec-exactst},  where we also clarify that this holds for larger classes of  subsequences.

\begin{rmk}
\label{rmk-diffsemcl} It seems likely that Theorem~\ref{thm-infst} i) which gives the limit behaviour of the Birkoff sums for $f_{M_1}$ can be improved to the form of Theorem~\ref{thm-infst} ii).
But this requires a refinement of Theorem~\ref{thm.singmap} below to $\mu(\varphi>n)=C\mu(\tau>n)(1+o(1))$ for some $C>0$; this type of result would require a serious amount of new work,
which we cannot pursue here.
\end{rmk}

\begin{rmk}
We believe that the proof of Theorem~\ref{thm-infst} can be extended to treat the case 
$\alpha=1/2$, when  $v$ is H{\"o}lder with $v(0)\ne 0$;  we expect convergence
along subsequences to the Gaussian $\mathcal{N}(0,1)$ with non standard normalisation. This seems plausible given the recent
work of Berkes~\cite{Berkes}, but we do not treat this case here. 
\end{rmk}
\begin{rmk}
A further extension of Theorem~\ref{thm-infst} is to treat
the case $\alpha=1$, but with a suitably modified $f$, such as
\begin{equation}
\label{eq-holland}
f_{M}^1(x) = \begin{cases}
        x (1+ M(x)x(\log x)^2),& \text{ if } x \in [0,\frac12];\\
        2x-1, & \text{ if } x \in (1/2,1].
        \end{cases}
\end{equation}
with $M\equiv M_1$ as in~\eqref{eq.intermittent2} or $M\equiv M_2$ as in~\eqref{eq:def-M-sine}.
Special versions of these maps $f_{M}^1$ with $M\equiv 1$ have been studied in 
see Holland~\cite{Holland05}.
The extra factor $(\log x)^2$ ensures that  $f_{M_1}^1$ preserves a finite measure $\mu\ll Leb$. 
In particular, for $M\equiv 1$,~\cite{Holland05} shows that the tail of the first return time of $f_{M_1}$ to $[1/2,1]$ behaves asymptotically as $n^{-1}(\log n)^{-2}$. We expect a semi-stable
law to hold for this example, but the proof requires a more elaborate asymptotic analysis
of the tail $\mu(\tau>y)$, which we do not present here.
\end{rmk}

\subsection{Method of proof and main dynamical ingredient}
\label{sec-mainres}

A standard way of obtaining limit theorems, in particular stable laws, for maps with indifferent fixed points is via inducing on sets away from the indifferent fixed points
along with understanding the return time distribution. To outline the difficulties in the setup of the wobbly maps $f_{M_1}$, and $ f_{M_2}$ 
(with $M_1, M_2$ as in~\eqref{eq.intermittent2} and ~\eqref{eq:def-M-sine}),
we briefly recall the main steps of the involved analysis in the probability preserving LSV map $([0,1],\mathcal{B}([0,1]), f, \mu)$ as in~\eqref{eq:nMPM} with $M\equiv 2^\alpha$. Let $Y\subset (0,1]$ be a reference set and let $\tau:Y\to\mathbb{N}$,
$\tau(x) = \inf \{ n \geq 1 : f^n(x) \in Y\}$ be the first return time of $f$ to $Y$. The main steps are:
\begin{itemize}
\item[a)] Recall that the induced map $f^{\tau}:Y\to Y$, which   preserves the measure $\mu_Y(E):=\mu(Y\cap E)$,
has good functional analytic properties in a Banach space  $\B\subset L^\infty(Y)$ (for instance, the space of piecewise H{\"o}lder continuous functions); 
\item[b)] Recall that $\mu_Y\{\tau>n\}=Cn^{-1/\alpha}(1+o(1))$  for some $C>0$ depending only on $f$. See~\cite{LSV99, Young} and also~\cite{Holland05, T15, T16}  for improved asymptotics.
\item[c)]  Obtain a limit theorem (stable law) via the spectral method, the Nagaev-Guivarc'h (also referred to as Aaronson-Denker) method, for $f^\tau$; we refer to the comprehensive survey~\cite{Gou15}
and to the original papers~\cite{AaronsonDenker01, GH88}.
\item[ d)] Pull back the limit theorem from the induced system via the method of Melbourne and T{\"o}r{\"o}k~\cite{MTor} (see also Sarig~\cite{Sarig01}, Zweim{\"u}ller~\cite{Zweimuller07}
for details of the method for the stable law case) or the method in~\cite{ Gouezel04b} based on techniques related to operator renewal sequences. While the `pull back' method is more elementary and somewhat less technical, the second has the advantage of  
eventually providing error rates in the involved convergence (such as Berry-Essen) as well as  local limit theorems as in Gou{\"e}zel~\cite{ Gouezel05}.
\end{itemize}
In the sequel we adapt the steps a)--d) to the proof of 
Theorem~\ref{thm-infst}. A first, more or less obvious, difference is that in the setup of the wobbly maps $f_{M_1}$ and $f_{M_2}$
\emph{item b) does not hold};  for the asymptotics of the return time in the setup of $f_{M_1}, f_{M_2}$ and $f_M^1$ defined in~\eqref{eq-holland} we refer to equation~\eqref{eq:tail-of-tau-sin-final} in Section~\ref{sec.first-tail}.
We recall that stable laws of index $\beta\in (0,2)$ for Gibbs Markov maps hold if and only if $\mu_Y\{\tau>n\}$ is regularly varying with index $-\beta$, that is
if and only if
$\mu_Y\{\tau>n\}=Cn^{-\beta}\ell(n)$  for $\beta\in (0,2)$ and $\ell$ some slowly varying\footnote{A measurable function $\ell: (0,\infty)\to (0,\infty)$ is 
slowly varying if $\lim_{x\to\infty}\frac{\ell(\lambda x)}{\ell(x)}\to 1$ for each $\lambda > 0$.} function; we refer to~\cite[Theorem 1.5]{Gouezel10b}. This together
with equation~\eqref{eq:tail-of-tau-sin-final} in Section~\ref{sec.first-tail} explains why stable laws cannot hold for $f_{M_1}$ and $ f_{M_2}$.

Items c)-d) are dealt with in an abstract setup where regular variation ( in particular, item b) ) fails. We refer to assumption (H3) in  Section~\ref{sec:abstra} and to the abstract Theorem~\ref{prop-main}.
In Section~\ref{sec-exactst} (summarising the various technical results in  Sections~\ref{sec.first-tail} and~\ref{proof.thm.sing}) we verify that the abstract assumptions of Theorem~\ref{prop-main}
hold in the setup of $f_{M_1}$, and $f_{M_2}$.

It is known that the induced map of the LSV map~\eqref{eq:nMPM} is Gibbs Markov, which facilitates an easy verification of item a) above. Roughly, a Gibbs Markov map is a uniformly expanding map with big images and good distortion properties;
see~\cite[Chapter 4]{Aaronson} (and Section~\ref{sec:abstra} below) for a complete definition.
While the first return map of the  wobbly map  $f_{M_2}$ is Gibbs Markov, the situation is very different in the setup of the wobbly map $f_{M_1}$.
The first difficulty in analysing the map $f_{M_1}$~\eqref{eq.intermittent2} is that there is a singularity set $\mathcal{C}=\{s_{\ell}=e^{-\ell c_1},\ell\geq 1\}$, namely the set of points $x\in(0,1/2)$
for which $-c^{-1}_1\log x\in\mathbb{N}$. We'll assume that $c$ is chosen so that $s_1=e^{-c_1}\ll 1$.

We write $\mathcal{D}=e^{-c_1}$, and $I_{\mathcal{D}}=[0,\mathcal{D}]$. 
Hence $\mathcal{C}\subset I_{\mathcal{D}}$. Thus we have to apply re-inducing schemes, and show the existence of
a return time function $\varphi:Y\to Y$ for which $(f_{M_1})^\varphi$ is Gibbs Markov. Furthermore we have to show that
the asymptotics of the tails of $\varphi$ are comparable to those of the first return time $\tau$.

We build upon the combinatorial arguments of Bruin, Luzzatto and van Strien~\cite{BLvS} to establish the following result, 
of which the proof is deferred to Section \ref{proof.thm.sing}.

\begin{thm}\label{thm.singmap}
Let $Y=[1/2,1]$. Then there exists a countable partition $\mathcal{Q} \pmod \mu$ of $Y$ into subintervals $\{Y_i\}$ and a return time function
$\varphi:\mathcal{Q}\to\mathbb{N}$ such that $F:Y\to Y$ where $F=f_{M_1}^{\varphi}$ or  $F=(f_{M_1}^1)^{\varphi}$ is a Gibbs-Markov map with respect to $\mathcal{Q}$, 
preserving an absolutely continuous (w.r.t.\ Lebesgue) probability measure $\mu$  so that for some $C>0$,
\begin{equation}\label{eq.return}
\mu(\varphi>n)\le C\,n^{-\frac{1}{\alpha}}.
\end{equation}
\end{thm}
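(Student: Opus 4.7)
The plan is to construct $\varphi$ as a reinducing of the first return time $\tau$ to $Y=[1/2,1]$, following the combinatorial technology of Bruin, Luzzatto and van Strien~\cite{BLvS}. The obstruction to using $\tau$ directly is that the left branch $\tilde f(x)=x(1+M_1(x)x^\alpha)$, while monotone on each $(s_{\ell+1},s_\ell)$, jumps across the discontinuity set $\mathcal C=\{s_\ell=e^{-\ell c_1}\}$. As a result, the cylinder $\{\tau=n\}$ is fragmented by pullbacks of $\mathcal C$ under iterates of $\tilde f$, and on a generic fragment $f_{M_1}^{\tau}$ maps onto only a proper subinterval of $Y$, so the first-return partition fails to be Markov.

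First I would analyse the preimage structure of the left branch: letting $x_n\downarrow 0$ be the preimages of $1/2$ under $\tilde f$, the iterate $\tilde f^{n-1}$ maps $(x_n,x_{n-1})$ monotonically onto $(1/2,1)$. I would subdivide each such interval into maximal subintervals on which no pullback of a discontinuity intervenes, then pull back by the right branch $2x-1$ to obtain a partition $\mathcal P$ of $Y$ on which $\tau$ is constant. On the pieces of $\mathcal P$ where $f_{M_1}^{\tau}$ is already surjective onto $Y$, set $\varphi=\tau$. On pieces where the image is a proper subinterval, recursively reinduce: iterate $f_{M_1}$ on the image and subdivide further until a full image of $Y$ is produced, and let $\varphi$ be the accumulated number of iterates. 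This produces a countable partition $\mathcal Q$ of $Y$ on which $F=f_{M_1}^{\varphi}$ is a full-branch map. The Gibbs--Markov properties then follow by standard arguments: uniform expansion from the factor $2$ of the right branch combined with the derivative growth at the exit from any neighbourhood of $0$; bounded distortion via the Koebe principle, using that $M_1$ is bounded above and away from zero (despite being discontinuous); and big images by construction. Existence of an acip $\mu$ follows from the Folklore theorem for Gibbs--Markov maps; the same construction, with the parabolicity modified by the extra $(\log x)^2$ factor, applies to $f_{M_1}^1$.

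For the tail bound I would first establish $\mu(\tau>n)\le C' n^{-1/\alpha}$ by linearising near the indifferent fixed point as in the standard LSV analysis, where boundedness of $M_1$ suffices to recover the same polynomial tail up to a constant. The main task is then to control the cumulative effect of the reinducing steps: at each reinducing level, the pieces on which the image fails to cover $Y$ shrink at a definite geometric rate, so summing a geometric series over the reinducing depth yields $\mu(\varphi>n)\le C n^{-1/\alpha}$. The main obstacle will be precisely this quantitative control over the reinducing combinatorics — showing that the accumulated depth does not inflate the polynomial tail — and this is where the framework of \cite{BLvS} is essential, as it supplies uniform bounds on the number of reinducing steps needed at each level and on the measure of the obstructing pieces.
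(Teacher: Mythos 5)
Your high-level plan — a BLvS-style reinducing scheme, using a stopping time that reaches large scale, bounded distortion via Koebe, and then linking the stopping-time tail to the return-time tail — is the same overall strategy the paper uses. But there is a genuine gap at the very step you flag as the main obstacle, and the way you propose to fill it would not work.

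You write that the BLvS framework ``supplies uniform bounds on the number of reinducing steps needed at each level and on the measure of the obstructing pieces.'' It does not, in this setting. BLvS (and the adaptation in Diaz-Ordaz--Holland--Luzzatto) handle a \emph{finite} set of critical/singular points. Here the discontinuity set $\mathcal{C}=\{e^{-\ell c_1}\}$ is countable and accumulates at the indifferent fixed point, which is precisely where orbits spend a long time. The crucial new estimate, which your proposal never identifies, is a \emph{count of how many pieces a single deep excursion generates}: if a component enters the neighbourhood of $0$ to depth $r$ (so it takes $\sim r$ iterates to escape), its orbit crosses $\sim\frac{1}{c_1\alpha}\log r$ of the discontinuities $s_\ell$, so it is chopped into up to $2^{N_r}\approx r^{\log 2/(\alpha c_1)}$ subpieces during that excursion. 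This is a \emph{polynomial} inflation of the number of components per return depth, not something absorbed into a geometric series over reinducing levels. The combinatorial estimate therefore has to show that the derivative gain $\approx r^{-1-1/\alpha}$ along a depth-$r$ excursion dominates the counting factor $r^{\log 2/(\alpha c_1)}$, i.e.\ that the net contribution per return is $r^{-\beta_1}$ with $\beta_1=1+\frac{1}{\alpha}\bigl(1-c_1^{-1}\log 2\bigr)>2$. This imposes a quantitative condition on $c_1$ (it must be large enough), which your proposal never notices and which does not appear in BLvS at all.

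Consequently, your argument for the tail bound — ``pieces shrink at a definite geometric rate, so summing a geometric series over the reinducing depth yields $\mu(\varphi>n)\le C n^{-1/\alpha}$'' — is too coarse and, without the above counting, would be false: it ignores that a single bad return can create polynomially many fragments all with the same itinerary, and that the polynomial tail of $\varphi$ actually comes from the event that some excursion has depth $\gtrsim n/i$. The paper's proof handles this by splitting $\mathcal{P}_n$ into itineraries with small cumulative return depth ($\sum r_i\le\eta n$, exponentially small by expansion beating chopping once $\beta_1>2$) and those with large cumulative depth ($\sum r_i>\eta n$, measure $O(n^{-1/\alpha})$ via a pigeonhole on the largest $r_j$ together with a summable product $\prod r_i^{-\beta_1}$), and only then converts the tail of the large-scale time $T$ into a tail for $\varphi$ through a secondary combinatorial argument. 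Your outline is compatible with this but does not contain the estimate that makes it true; asserting that BLvS already supplies it is where the proof would break down.
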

Theorem~\ref{thm.singmap} can be be regarded as a substantial refinement of the work of Diaz-Ordaz,
Holland and Luzzatto~\cite{DHL06} which adapts arguments in~\cite{BLvS}
to reinduce maps with a\emph{ finite number of singularities/discontinuities} to Gibbs Markov maps.

\begin{rmk}
\label{rmk-sin}
A similar Theorem~\ref{thm.singmap} holds for the tails of the \emph{first} return time $\tau$ of $f_{M_2}$ to  $Y=[1/2,1]$; this follows from ~\eqref{eq:tail-of-tau-sin-final} in Section~\ref{sec.first-tail}.
The sitution is mush easier since in this case no reinducing is required as the first return map 
$(f_{M_2})^\tau$ is Gibbs Markov: see Remark~\ref{rmk-GMfM2}.
\end{rmk}

After having summarized the main difficulties that occur for the considered wobbly maps,
especially in the case of $f_{M_1}$, we turn to more details on the method of proof.  
In the particular setup of $f_{M_2}$ the pull back method of Melbourne and T\"or\"ok mentioned in item d)
above (see also~\cite{Gou15} for a summary of the method adapted to stable laws) works with moderate
adjustments to the arguments.
However, in the case of $f_{M_1}$, which requires re-inducing,
it becomes more difficult to prove that the sequences $(n_r)$
along which the scaled ergodic sums of the original map converge are subsequences of
the exponential sequence $(k_n) = (\lfloor c^n \rfloor)$.
Given the result in~\cite{ Gouezel05} along with several estimates in~\cite{BT18},
we found it more convenient to adapt the arguments~\cite{ Gouezel05} to the setup of (tails of) of both $f_{M_1}$  and $f_{M_2}$. We refer to the abstract Theorem~\ref{prop-main} in Section~\ref{sec:abstra}.


An immediate notable consequence of Theorem~\ref{thm.singmap} are upper and lower bounds on the correlation decay. 
For a precise statement on these bounds we need to introduce 
additional notation.
We recall the one in~\cite{BT18}, which is adequate to the present setup but note that this can be 
further refined as in the work of Bruin, Melbourne and Terhesiu~\cite{BMTmaps} (which treats much more complicated classes of dynamical systems) w.r.t.\ the  condition on the seminorm.

As in~\cite{BT18} we consider the following class of observables. Let $X=[0,1]$ and $\tau^*(x) := 1+\min\{ i \geq 0 : f^i(x) \in Y\}$, (here $f$ is either
$f_{M_1}$ or $f_{M_2}$).
Let $s(x, x')$ be the separation time of points $x,x'\in Y$ and let $\theta \in (0,1)$ be such that ~\eqref{eq:locLip} below holds; equation~\eqref{eq:locLip} in Section~\ref{sec-frame} is used to define a Gibbs Markov map.
Let $v_X:X\to\R$. For $\eps > 0$ we define the weighted norm $\|\, \|_\theta^*$ as follows: 
\begin{equation}~\label{eq-extracond2}
\begin{cases}
\|v_X\|_{\infty}^* := \sup_{x\in X}|v_X(x)|\tau^*(x)^{1+\epsilon}, \\[2mm]
|v_X |_\theta^* = \sup_{a \in \alpha} \sup_{0 \leq i < \varphi(a)} \sup_{x,x' \in a}
 \frac{(\tau^* \circ f^i(a))^{1+\eps}}{\theta^{s(x,x')}} |v_X \circ f^i(x) - v_X \circ f^i(x')|,
\end{cases}
\end{equation}
and $\| v_X \|_\theta^* = \| v_X \|_\infty^* + | v_X |_\theta^*$. 
As clarified in Subsection~\ref{subsec-ver} (for $f\equiv f_{M_1}$), 
$\tau^*$ is constant on $f^i(a)$, $0\leq i < \varphi(a)$,
so the factor $\tau^* \circ f^i(a)$ in \eqref{eq-extracond2} 
is well-defined. We note that if $v_X$ is supported on $Y$, then the weighted norms $\|\,\|_{\infty}^*$ and $\| \, \|_\theta^*$ coincide with 
$\|\,\|_{L^\infty(\mu_Y)}$ and $\| \, \|_\theta$ with $\|v\|_\theta=\|v\|_{L^\infty(\mu_Y)}+|v|_\theta$, where $v|_\theta$ is
the H{\"o}lder constant of $v$ w.r.t.\ the distance $d_\theta(x,x'):=\theta^{s(x,x')}$. 

\begin{cor}{~\cite[Theorem 4.2]{BT18}}
\label{lemma-corel-finite}
Assume the setup of $f_{M_1}$ with $M_1$ as in~\eqref{eq.intermittent2} and $\alpha\in (0,1)$.
Suppose that $v, w:[0,1]\to\R$ are such that $\|v\|_\theta^*<\infty$ and $\|w\|_{\infty}^*<\infty$.
Let $d\mu=\frac{1}{\bar\varphi}d\mu$. Then
\begin{align*}
\int v \, w \circ f_{M_1}^{n} \, d\mu -\int v\, d\mu \int w  \, d\mu
=\frac{1}{\bar\varphi}\sum_{j=n+1}^\infty \mu_0(\varphi>j)  \int v\, d\mu \int w  \, d\mu+ E_n,
\end{align*}
where $|E_n |\le C \|v \|_{\theta}^*\, \| w \|_{\infty}^*\, d_n$, for some $C>0$ and 
$$
d_n :=  \begin{cases}
n^{-1/\alpha} &\text{ if } \alpha< 2; \\
n^{-2} \log n &\text{ if } \alpha = 1/2; \\
 n^{-(2/\alpha-2)} &\text{ if } \alpha>2.
\end{cases}
$$
\end{cor}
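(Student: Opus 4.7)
The plan is to deduce this from the abstract correlation-decay result \cite[Theorem 4.2]{BT18} via the inducing scheme produced by Theorem~\ref{thm.singmap}. That reference theorem gives precisely this renewal-type asymptotic with error $d_n$ whenever one has: (i) a Gibbs--Markov induced map $F=f^\varphi$ on a reference set $Y$ with good distortion and big images, (ii) a polynomial upper bound on the tail $\mu(\varphi>n)$, and (iii) observables on the tower that are regular in the weighted norms $\|\cdot\|_\theta^*$, $\|\cdot\|_\infty^*$ defined in~\eqref{eq-extracond2}. My task is therefore to check these ingredients in the setting of $f_{M_1}$.

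First I would invoke Theorem~\ref{thm.singmap} to produce the countable partition $\mathcal{Q}$ of $Y=[1/2,1]$, the return time $\varphi:\mathcal{Q}\to\mathbb{N}$, and the induced map $F=f_{M_1}^{\varphi}$, which is Gibbs--Markov with respect to $\mathcal{Q}$ and preserves a probability measure $\mu$ absolutely continuous with respect to Lebesgue. Theorem~\ref{thm.singmap} also supplies the tail bound $\mu(\varphi>n)\le Cn^{-1/\alpha}$, matching the form required by \cite[Theorem 4.2]{BT18}. Next, I would lift $v$ and $w$ to the associated Young tower via the standard construction, for which one replaces $f_{M_1}$ by the tower map and $X$ by the tower space; the function $\tau^*$, being constant along columns $\{f_{M_1}^i(a):0\le i<\varphi(a)\}$ as noted in the paragraph preceding the statement, plays the role of the level counter. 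Thus the weighted norms $\|v\|_\theta^*$ and $\|w\|_\infty^*$ are exactly the regularity measurements on the tower that \cite[Theorem 4.2]{BT18} requires: the $\epsilon$-weight absorbs the growth up the tower, while $\theta^{s(x,x')}$ encodes the Gibbs--Markov regularity on $Y$.

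With the hypotheses of \cite[Theorem 4.2]{BT18} in force, I would quote it directly to obtain the decomposition
\[
\int v\,w\circ f_{M_1}^n\,d\mu - \int v\,d\mu \int w\,d\mu
 = \frac{1}{\bar\varphi}\sum_{j=n+1}^{\infty}\mu_0(\varphi>j)\int v\,d\mu\int w\,d\mu + E_n,
\]
with $|E_n|\le C\|v\|_\theta^*\|w\|_\infty^*\,d_n$. The leading term is the renewal tail arising from operator renewal theory applied to the first-return transfer operator of $F$, and the error term $d_n$ is the second-order remainder of the operator renewal expansion; its explicit form in the three ranges of $\alpha$ reflects the interplay between the tail exponent $1/\alpha$ and the second moment of $\varphi$, and is precisely the rate delivered by \cite[Theorem 4.2]{BT18} when the tail of $\varphi$ is bounded as in~\eqref{eq.return}.

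The main obstacle in this proof is not analytic but structural: one must confirm that the induced system produced by Theorem~\ref{thm.singmap}, which was built through a nontrivial reinducing procedure (because $f_{M_1}$ has the countable discontinuity set $\mathcal{C}$), genuinely satisfies the Gibbs--Markov axioms used by \cite{BT18}, and that $\tau^*$ in \eqref{eq-extracond2} is well-defined and measurable along columns of the tower. Both points are handled by the careful combinatorial construction in Section~\ref{proof.thm.sing} and the observation in Subsection~\ref{subsec-ver} that $\tau^*$ is constant on each $f_{M_1}^i(a)$ for $0\le i<\varphi(a)$; once these are in hand, the corollary follows by a direct citation of \cite[Theorem 4.2]{BT18}.
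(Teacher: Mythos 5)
Your proposal is correct and follows essentially the same route the paper itself takes: the paper presents this as a direct corollary of Theorem~\ref{thm.singmap} together with a citation of \cite[Theorem 4.2]{BT18}, and your argument (establish the Gibbs--Markov induced map with the tail bound~\eqref{eq.return}, lift to the tower, check the weighted norms $\|\cdot\|_\theta^*$, $\|\cdot\|_\infty^*$ are the right regularity classes, then quote the BT18 renewal result) is exactly the intended chain of reasoning. One small thing worth making explicit, which you only gesture at: the BT18 theorem also needs conditions of the form (H1) (control of $\int_{\{\varphi>n\}}\rho\,d\mu_Y$) and (H2) (columns of the tower are either inside or disjoint from $Y$), and the paper verifies these in Subsection~\ref{subsec-ver}; a fully self-contained proof of the corollary should invoke that verification rather than leaving it implicit in the phrase ``genuinely satisfies the Gibbs--Markov axioms.''
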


\paragraph{Notation}
For $a_n,\,b_n>0$ we write $a_n=O(b_n)$ or $a_n \ll b_n$
if there is a constant $C>0$ such that $a_n/b_n\le C$ for all $n\ge1$.
We write $a_n \sim b_n$ if $\lim_n a_n/b_n = 1$,  $a_n=o(b_n)$ if $\lim_n a_n/b_n = 0$
and $a_n\approx b_n$ if $C^{-1}\le a_n/b_n\le C$ for some $C>0$ and all $n\ge1$.

\section{General background on semistable laws}
\label{sec-sst}

Semistable laws are limits of centred and normed sums of i.i.d.\  random variables along 
subsequences $k_n$ for which
\begin{equation}
\label{eq:H}
k_{n}<k_{n+1}, n\geq 1 \text{ and } \lim_{n\to\infty}\frac{k_{n+1}}{k_n}=c > 1. 
\end{equation}
Since $c=1$ corresponds to the stable case (\cite[Theorem 2]{M}), we recall the background in the case $c > 1$. The simplest such a sequence is $k_n = \lfloor c^n \rfloor$.

Let $X, X_1, X_2, \ldots $ be i.i.d.\  random variables with distribution 
function $F(x) = \P ( X \leq x )$. 
Given a semistable random variable $V$ with distribution function $G(x) = \P ( V \leq x)$,
as in~\cite{M}, we say that the random variable $X$ belongs to the domain of 
geometric partial attraction of the semistable law $G$ with index $\beta\in (0,2)$ if there is a subsequence $k_n$ 
for which (\ref{eq:H}) holds, and a norming and  a centring sequence $A_n, B_n$, such 
that
\begin{equation} \label{eq-dgp-conv}
\frac{\sum_{i=1}^{k_n} X_i }{A_{k_n}} - B_{k_n} \rightarrow^d V.
\end{equation}Without loss of generality we may assume that $A_n = n^{1/\beta} \ell_1(n)$, $\beta\in (0,2)$,
with some slowly varying function $\ell_1$ (see \cite[Theorem 3]{M}). Also, $B_{k_n}$ is so that $\lim_{r\to\infty}B_{k_n}/k_n$ exists.
For the form and properties  of the characteristic function of the random variable $V$ we refer to~\cite{Kruglov, Csorgo07}.

In order to characterise the domain of geometric partial attraction we need some further 
definitions. As $k_{n+1} / k_n \to c > 1$, for any 
$x$ large enough there is a unique $k_n$ such that $A_{k_n} \leq x < A_{k_{n+1}}$. 

Let $\delta(x) = \frac{x}{A_{k_n}}$ and note that the definition of $\delta$ does depend on the norming sequence.
Let  $x^{-\beta} \ell(x) : = \sup \{ t : t^{-1/\beta} \ell_1(1/t) > x \}$
and note that  $x^{1/\beta} \ell_1(x)$ and $y^\beta / \ell(y)$ are asymptotic inverses of each 
other.
For properties of asymptotic inverse of regularly varying functions we refer to
\cite[Section 1.7]{BGT}.

By Corollary 3 in \cite{M}, \eqref{eq-dgp-conv} holds on the 
subsequence $k_n$ with norming sequence $A_{k_n}$ if and only if 
\begin{equation} \label{eq-dgp-distf}
\overline F(x) := 1 - F(x) = \frac{\ell(x)}{x^\beta} [ M(\delta(x)) + h(x) ],
\end{equation}
where $M$ is a log periodic function with period $c^{1/\beta}$ with $c$ given in~\eqref{eq:H} (that is, $M(c^{1/\beta}x)=M(x)$)
and  $h$ is right-continuous error function such that
$\lim_{n \to \infty} h(A_{k_n} x ) = 0$, whenever $x$ is a continuity point of $M$.
Moreover, if $M$ is continuous, then $\lim_{x \to \infty} h(x) = 0$. With the parameters $\beta$ and $c$ specified,  
throughout the rest of the paper, $V$ in \eqref{eq-dgp-conv}
will be referred to as $V_{\beta}(c)$ and we mean a random variable distributed according to a semistable distribution of index $\beta$
and parameter $c$. As recalled below, on different subsequences there are different limit distributions belonging to the class of semistable law.

We say that $u_n$ \textit{converges circularly} to 
$u \in (c^{-1},1]$, $u_n \stackrel{cir}{\to} u$, if $u \in (c^{-1},1)$ and $u_n \to u$ 
in the usual sense, or $u =1$ and $u_n$ has limit points $1$, or $c^{-1}$, or both.

For $x > 0$ (large)  define 
\begin{equation} \label{eq:def-delta}
\gamma_x=\frac{x}{k_n}, \quad \text{where } k_{n-1} <  x \leq k_n.
\end{equation}
and note that by \eqref{eq:H}
\[
 c^{-1} =\liminf_{x \to \infty} \gamma_x < \limsup_{x \to \infty } \gamma_x = 1.
\]

The definitions of the parameter $\gamma_n$ and the circular convergence 
follow the definitions in \cite[p.~774 and 776]{KC}, and are slightly different from 
those in \cite{M}.

It follows from Theorem 1 \cite{CM1}  that (\ref{eq-dgp-conv}) holds along a subsequence 
$(n_r)_{r=1}^\infty$ (instead of $k_n$) if and only if 
$\gamma_{n_r} \stackrel{cir}{\to} \lambda \in (c^{-1}, 1]$ as $r \to \infty$. More precisely, whenever $\gamma_{n_r} \stackrel{cir}{\to} \lambda$,
\begin{equation} \label{eq-conv-subseq}
\frac{\sum_{i=1}^{n_r} X_i-B_{n_r}} {n_r^{1/\beta} \ell_1(n_r)} \to^d V_{\lambda}
\quad \text{as } r \to \infty,
\end{equation}
where $B_{n_r}$ is so that $\lim_{r\to\infty}B_{n_r}/n_r$ exists
and 
$V_\lambda$ is a semistable random variable with distribution and characteristic function depending on $\lambda$. For the precise form of the characteristic
function of $V_\lambda$ we refer to~\cite{CM1, PK1}.

\section{Abstract setup}
\label{sec:abstra}

\subsection{Main assumptions}
\label{sec-frame}

Let $f:X\to X$, for $X$ a  Polish space.  
We require that there exists $Y\subset X$ and a \emph{general} (not necessarily first) return time $\varphi: Y\to \N$ such that
the return map $F:=f^\varphi:Y \to Y$  preserving the measure $\mu_Y$ is Gibbs Markov, as recalled below (see \cite[Chapter 4]{Aaronson} for further details).
For convenience we rescale such that $\mu_Y(Y)=1$  and assume that $F$ has a Markov partition $\alpha = \{ a \}$ such that
$\varphi|_a$ is constant on each partition element so that
$F:a \to Y$ is a bijection $\bmod{\, \mu_Y}$. Throughout, we assume that that the greatest common divisor $\text{gcd}( \varphi(a) , a \in \alpha) = 1$.

Let $\alpha_n$, $n\ge 1$ be the $n$-th refined partition  and let $p_n = \log \frac{d\mu}{d\mu\circ F^n}$ be the corresponding potential.
We assume that there is $\theta \in (0,1)$ and 
$C_1 > 0$ such that for all $n\ge 1$,
\begin{equation}\label{eq:locLip}
e^{p_n(y)} \leq C_1 \mu_Y(a),\qquad |e^{p_n(y)}  - e^{p_n(y')}| \leq C_1 \mu_Y(a) \theta^{s(F^ny, F^ny')}, 
\end{equation}
for all  $y, y' \in a,\,  a \in \alpha_n$,
where $s(y_1, y_2) = \min\{ m \geq 0 : F^m(y_1)\text{ and } F^m(y_2) $  belong to different elements of  $\alpha \}$
is the {\em separation time}. We also assume that $\inf_{a \in \alpha}\mu_Y(Fa)>0$ (big image property).

Throughout we let $\tau:Y\to\N$ be the first return time to $Y$ and
write $\{ \tau> n\}: = \{ y \in Y : \tau(y) > n\}$ and the same for $\{ \varphi> n\}$.
We assume that
\begin{itemize}
\item[\textbf{(H0)}] $\mu_Y(\varphi>n)=O(n^{-\beta}\ell(n))$ 
for $\beta> 1$ and some slowly varying function $\ell$. 
\end{itemize}


We define $\varphi$ in terms of consecutive first returns to $Y$.  Since $\tau_k$ is the
$k$-th return time to $Y$, \ie $\tau_0 = 0$,
$\tau_{k+1}(y) = \tau_k(y) + \tau(f^{\tau_k(y)}(y))$ and we let $\rho$ be the {\em reinduced  time} for
the general return, \ie $\varphi(y) = \tau_{\rho(y)}(y)$.
As in~\cite{BT18}, lower bounds on the correlation decay require that
\begin{itemize}
\item[\textbf{(H1)}] $\int_{\{\varphi>n\}}\rho\, d\mu_Y=O(\mu_Y(\varphi>n))$.
\end{itemize}
Assumption  (H1) will be used for simpler arguments in Subsection~\ref{subsec:rellimth}.

\begin{rmk}\label{rmk-expansion}
The first return time $\tau$ may be defined on a
larger set than where the general return time
$\varphi$ is defined, but in our main example the difference in domains has
measure zero, so we will ignore it.
\end{rmk}

For the purpose of simpler arguments in Subsection~\ref{subsec:rellimth}, we also require the following mild condition on the inducing scheme.
\begin{itemize}
\item[\textbf{(H2)}]
Either $f^i(a) \subset Y$ or
$f^i(a) \cap Y = \emptyset$ for all $a \in \alpha, 0 \leq i < \varphi(a)$, 
\end{itemize}

In order to obtain a non- Gaussian limit law for $f_\Delta$ (and thus for $f$),
we assume that $\tau$ behaves according to~\eqref {eq-dgp-distf}.
\begin{itemize} 
\item[\textbf{(H3)}] Suppose that (H0)  holds with $\beta\in (1,2)$.  Assume that 

\begin{itemize}
\item[i)]there exists a sequence $k_n\to\infty$ so that $\lim_{n\to\infty}\frac{k_{n+1}}{k_n}=c> 1$;
\item[ii)] there exists  a right continuous, logarithmically periodic function  $M: (0,\infty) \to (0, \infty)$ with period 
$c^{1/\beta}$, i.e.~$M(c^{1/\beta} x ) = M(x)$ for all $x > 0$.

\item[iii)]Under i)-ii),  suppose that for some slowly varying function $\ell$,
\[
\mu_Y(\tau>x)= x^{-\beta}\ell(x)(M(\delta(x)) + h(x)),
\] where:
\begin{itemize}
\item[$\bullet$)] $\delta(x)=\frac{x}{A_{k_n}}$, for  $x\in [A_{k_n}, A_{k_{n+1}})$
with $A_{k_n}$ defined by $A_{n}:=n^{1/\beta}\ell_1(n)$, where $\ell_1$ is slowly varying 
so that $n^{1/\beta}\ell_1(n)$ is the asymptotic inverse of $n^\beta\ell(n)^{-1}$.

\item[$\bullet\bullet$)]  $h$ is some right-continuous function  such that
$\lim_{n \to \infty} h(A_{k_n} x ) = 0$, whenever $x$ is a continuity point of $M$.
\end{itemize}
\end{itemize}
\end{itemize}
\begin{rmk} The case $c=1$ in (H3) corresponds to $\beta$-stable laws for $f$; it is well understood
via the method in~\cite{MTor} (see, for instance,~\cite{Sarig01})
and omitted here.
\end{rmk}

\begin{itemize} 
\item[\textbf{(H4)}] Suppose that (H3) holds and assume that $\lim_{n\to\infty}\frac{\mu_Y(\varphi>n)}{\mu_Y(\tau>n)}=C$, for some $C>0$.
\end{itemize}

\subsection{Observables on $X$ and $\Delta$ }
\label{sec-towers}

The tower $\Delta$ is the disjoint union 
of sets $(\{ \varphi = j\} ,i)$, $j \geq 1$, $0 \leq i < j$
with tower map
\[
f_\Delta(y,i) = \begin{cases}
(y,i+1) & \text{ if } 0 \leq i < \varphi(y)-1,\\
(F(y),0) & \text{ if } i = \varphi(y)-1.
\end{cases}
\]
This map preserves the measure $\mu_\Delta$ defined as
$\mu_\Delta(A,i) = \mu_Y(A)$ for every measurable set $A$, with $A \subset \{ \varphi = j\}$ and $0 \leq i < j$.

Let $Y_i = \{ (y,i) : \varphi(y) > i\}$ be the $i$-th level of the tower,
so $Y   = Y_0$ is the base.
The restriction $\mu_\Delta|_{Y} = \mu_Y$ is invariant under 
$f_\Delta^\varphi$, which is the first return map to the base.
The function $\varphi$ extends to the tower via $\varphi_\Delta(y,i) := \varphi(y) - i$.

Define $\pi:\Delta \mapsto X$ by $\pi(y,i) := f^i(y)$.
The measure $\mu_X = \mu_\Delta \circ \pi^{-1}$ is $f$-invariant, and $\mu_X$ is related 
to the $F$-invariant measure $\mu_Y$ via 
$\mu_X(A)= \sum_{j=0}^\infty \mu_Y(f^{-j}A \cap \{ \varphi > j \} )$.
Regardless of whether $\bar\varphi := \int_Y \varphi\, d \mu_Y$
is finite (in which case we can normalise $\mu_X$) or not, $\mu_Y$
is absolutely continuous w.r.t.\ $\mu_X$.
 
Let $g_X$ be an observable supported on the original space $X$;
it lifts to an observable on the tower $g:= g_X \circ \pi$.
In what follows we use the method in~\cite{Gouezel04b, Gouezel05} to
study limit theorems  for $f$ and observables $g_X$ supported on $X$ via limit theorems via $f_\Delta$ and $g$.

Given  $g= g_X \circ \pi$, let $g_{Y}=\sum_{j=0}^{\varphi-1} g\circ f_\Delta^j$  be its induced version  (to the base of the tower).
We assume
\begin{itemize}
\item[\textbf{(H5)}]
$\mu_Y(|g_Y|>t)=C\mu_Y(\varphi>t)(1+o(1))$ as $t\to\infty$ for some $C>0$.
\end{itemize}

\subsection{Results in the abstract setup}
\label{sec-res}

Our main result in the abstract setup is on limit laws for $f$ and observables supported on $X$.

\begin{thm}
\label{prop-main} Assume (H0)--(H3) with $\beta\in (1,2)$. Let $g_X:X\to\R$  be a bounded observable on $X$ and assume that its induced version $g_Y$ (to the base of the tower)
satifies (H5).

Let $k_n$ be a subsequence satisfying (H3) i) and  let $A_{n}:=n^{1/\beta}\ell_1(n)$ be defined in (H3) iii). Then there exists a subsequence $n_r$  of $k_n$ so that
\begin{equation*}\frac{\sum_{j=0}^{n_r-1}g_X\circ f^{j}-B_{n_r}\int_X g_X\, d\mu}{A_{n_r}}\rightarrow^d W \text{ as } r\to\infty,
\end{equation*}
for some random variable $W$ with a non trivial distribution and some centering sequence  $B_{n_r}$ is so that $\lim_{r\to\infty}B_{n_r}/n_r$ exists.

Moreover, if (H4) holds, the following holds for some centering sequence  $B_k$ is so that $\lim_{n\to\infty}B_k/k$ exists:
\begin{itemize}
\item[i)]  There is a  sequence $k_n$ such that $\frac{\sum_{j=0}^{k_n-1}g_X\circ f^{j}-B_{k_n}\int_X g_X\, d\mu}{A_{k_n}}\rightarrow^d V_{\beta}(c)$, as $n\to\infty$.
\item[ii)]Given $\gamma_x$ as in~\eqref{eq:def-delta}, whenever $\gamma_{n_r} \stackrel{cir}{\to} \lambda\in (c^{-1}, 1]$,
there is a  sequence $n_r$ such that  $\frac{\sum_{i=1}^{n_r} g_X\circ f^i-B_{n_r}\int_X g_X\, d\mu}{A_{n_r}} \to^d V_{\lambda}$, as  $r \to \infty$, where $V_\lambda$ is as in~\eqref{eq-conv-subseq}.
\end{itemize}
\end{thm}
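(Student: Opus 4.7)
Since $g=g_X\circ\pi$ and Birkhoff sums are preserved by the semiconjugacy $\pi$, it suffices to prove the limit theorem for $(f_\Delta,\mu_\Delta)$ and the lifted observable $g$; the plan is then to adapt the operator-renewal machinery of Gou\"ezel~\cite{Gouezel04b,Gouezel05} to the semistable setting, where the regularly varying tail used in those works is replaced by the log-periodically modulated tail from (H3).

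First I would work on the Gibbs-Markov base $(Y,\mu_Y,F)$ with transfer operator $L$ acting on Lipschitz functions for the metric $d_\theta$. The Aaronson-Denker/Nagaev-Guivarc'h theory (\cite{AaronsonDenker01,GH88,Gou15}) provides a spectral decomposition of the twisted operator $R(t)v:=L(e^{itg_Y}v)=\lambda(t)P(t)+N(t)$ for $t$ near $0$, with $\lambda(t)$ a simple eigenvalue perturbing $1$ and $N(t)$ of uniformly small spectral radius. The key asymptotic, for $\beta\in(1,2)$, is
\[
1-\lambda(t)=it\int_Y g_Y\,d\mu_Y+\psi(t),
\]
where $\psi$ is determined by the Fourier transform of the tail of $g_Y$. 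Using (H5) this tail matches that of $\varphi$ up to a constant; combining with (H3) the computation gives $\psi(t)\sim -|t|^\beta\ell(1/|t|)\sigma(t)$ with $\sigma$ logarithmically periodic of period $c^{1/\beta}$ in $\log(1/|t|)$. This is the semistable analogue of the standard Fourier-tail expansion: the log-periodic factor $M(\delta(\cdot))$ survives because the Fourier integral over one geometric cycle does not average out.

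Iterating $\widehat{\mu_Y}(e^{itS_n^F g_Y/A_n})\simeq\lambda(t/A_n)^n$ with $A_n=n^{1/\beta}\ell_1(n)$, the quantity $n\,\psi(t/A_n)$ converges exactly when $tA_n^{-1}$ settles to a definite phase inside a period of $\sigma$. Along $k_n$ with $k_{n+1}/k_n\to c$ one has $A_{k_{n+1}}/A_{k_n}\to c^{1/\beta}$, matching the period of $\sigma$, so under (H4) the limit is the semistable variable $V_\beta(c)$ on the base, and the circular-convergence version~\eqref{eq-conv-subseq} yields $V_\lambda$ on subsequences $n_r$ with $\gamma_{n_r}\stackrel{cir}{\to}\lambda$, cf.~\cite{CM1,M,KC}. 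Without (H4) one only has $\mu_Y(\varphi>n)=O(\mu_Y(\tau>n))$ from (H0), so Helly's selection theorem applied to the bounded sequence $\mu_Y(\varphi>k_n)/\mu_Y(\tau>k_n)$ extracts a subsequence $n_r$ of $k_n$ on which that ratio converges to some $C\ge 0$; the same spectral analysis then produces a nontrivial limit $W$, with $C>0$ guaranteed by (H1) together with $\varphi\ge\tau$.

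To transfer from $F$ on $Y$ to $f_\Delta$ on $\Delta$ and thence to $f$ on $X$, I would invoke the operator-renewal equation of~\cite{Gouezel05}. Setting $T_n(t)v=L_\Delta^n(e^{itS_n^{f_\Delta}g}v)$ and the first-return blocks $R_k(t)v=L(e^{itS_{\varphi-1}^{f_\Delta}g}\mathbf{1}_{\{\varphi=k\}}v)$, one has $T_n=\sum_{k\le n}R_kT_{n-k}+E_n$ where $E_n$ collects orbits not yet returned to $Y$ by time $n$. Using (H0), (H1), (H2) and boundedness of $g_X$ one estimates $\|E_n\|=O(A_n^{-1}\int_{\{\varphi>n\}}\varphi\,d\mu_Y)+O(\mu_Y(\varphi>n))=o(1)$ after rescaling by $A_n$. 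Inverting the generating-function form of the renewal equation with the spectral input from the previous step yields the asymptotic of $T_n(t/A_n)\mathbf{1}$ along the chosen subsequences, and integrating against $\mu_\Delta$ produces the desired convergence of $\widehat{\mu_X}(e^{itS_n^fg_X/A_n})$ to the characteristic function of $V_\beta(c)$, $V_\lambda$, or $W$.

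The main obstacle I anticipate is the expansion of $\psi(t)$ in the semistable regime. The Karamata-Tauberian route used in~\cite{Gouezel04b,Gouezel05} for the stable case exploits pointwise regular variation of the tail, whereas here the log-periodic modulation $M(\delta(\cdot))$ in (H3) is intrinsically oscillatory, so the Fourier-tail computation has to be carried out sequentially, tracking $tA_n^{-1}$ through a full period of $\sigma$ rather than passing to an unqualified limit. A secondary delicate point is the centring: for $\beta\in(1,2)$ the mean $\int g_Y\,d\mu_Y$ must be subtracted from $S_n^Fg_Y$, and one must verify that the induced centring sequence $B_{n_r}$ on the tower satisfies $B_{n_r}/n_r\to\int_X g_X\,d\mu$; this uses $\bar\varphi<\infty$ (ensured by (H0) with $\beta>1$) together with the identity $\int g_Y\,d\mu_Y=\bar\varphi\int_X g_X\,d\mu$ coming from the definition of $\mu$ via the tower construction.
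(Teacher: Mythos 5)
Your high-level strategy (Gibbs--Markov base, spectral perturbation, operator-renewal transfer to $f_\Delta$ and then to $f$, and ultimately the Cs\"org\H{o}--Megyesi merging/circular-convergence machinery) matches the paper's. However, two substantive points differ, one of which is a genuine gap.

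\emph{Fourier vs.\ real Laplace.} You work with $R(t)v=L(e^{itg_Y}v)$ and anticipate, correctly, that the hardest step would be to extract an expansion $1-\lambda(t)=it\int g_Y\,d\mu_Y+\psi(t)$ with $\psi$ carrying the log-periodic modulation. The paper deliberately avoids this: after adding a constant to make $g_X$ positive, it passes to the \emph{real Laplace} perturbation $R(1,s)v=R(e^{-sg_Y}v)$, $s\ge 0$, and proves (Lemma~\ref{lemma-1s}) the much weaker statement $1-\lambda(1,s)=G(s)(1+o(1))$ with $G(s)=1-\E(e^{-sZ_1})$, $Z_1$ distributed like $g_Y$. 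Crucially, no log-periodic expansion of $G$ is ever computed; instead, (H0) gives the two-sided bound $\tilde G(s)\le\bar\varphi^{-1}G(s)\le C\tilde G(s)$ (with $\tilde G$ built from an i.i.d.\ sequence with the tail of $\tau$), and this sandwich of nonnegative Laplace transforms, combined with Proposition~\ref{prop-lthto}, lets one invoke \cite[Corollary~3]{M} and \cite[Theorem~2]{CM1} \emph{directly for the i.i.d.\ comparison process}, never needing the Fourier-tail asymptotic. So the ``main obstacle'' you foresee is real in the Fourier formulation, and the paper's switch to real Laplace transforms is precisely how it is sidestepped.

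\emph{The non-(H4) extraction.} You propose applying Helly (really Bolzano--Weierstrass) to the bounded scalar sequence $\mu_Y(\varphi>k_n)/\mu_Y(\tau>k_n)$ to find $n_r$ along which it converges and then running ``the same spectral analysis''. This has a gap. Convergence of the tail ratio at the single scale $y=k_{n_r}$ does not control $\mu_Y(\varphi>y)$ across the full continuum of scales $y\sim xA_{n_r}$, $x>0$, that enter the triangular-array/Laplace-transform computation for the $n_r$-fold sum; there is no reason the extracted subsequence yields a coherent tail asymptotic rather than just a coherent value at a discrete sequence of points. The paper instead extracts on the \emph{ratio of Laplace transforms} of the two comparison i.i.d.\ sums, which is automatically squeezed into $[1,C]$ by~\eqref{eq-l1}; since a Laplace transform integrates over the entire tail, convergence of this ratio along $n_r$ does give the needed global control, after which the continuity theorem for Laplace transforms, the merging result \cite[Theorem~2]{CM1}, and stochastic compactness of $\{F_n\}$ produce the nontrivial $W$. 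You would need to replace your Helly step by something of this kind (or by an extraction simultaneously over all scales $xA_{n_r}$) for the ``Moreover'' part of the theorem without (H4) to go through.

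On the centring, your closing remark is correct and matches the paper: $\bar\varphi<\infty$ under (H0) with $\beta>1$ and $\int_Y g_Y\,d\mu_Y=\bar\varphi\int_X g_X\,d\mu_X$ give $B_{n_r}/n_r\to\int_X g_X\,d\mu$.
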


\subsection{Transfer operators for $F$, $f$ and $f_\Delta$}
\label{sec-gentower}

Let $P$ be the transfer operator associated with
the tower map $f_\Delta$ and potential 
$$
p_\Delta(y,i) := \begin{cases}
0 & \text{ if } i < \varphi(y)-1,\\
p(y)& \text{ if } i = \varphi(y)-1.
\end{cases}
$$
Throughout, we let $R:L^1(\mu_Y)\to L^1(\mu_Y)$ be the transfer operator associated with the base 
map $(F=f^\varphi, Y, \mathcal{A}, \mu_Y)$.
Since $F$ is Gibbs Markov, $R$ has good spectral properties in the space $\B$ of bounded piecewise H{\"o}lder functions compactly 
embedded in $L^\infty(\mu_Y)$. The norm on $\B$ is defined by $\| v \| = |v|_\theta + |v|_\infty$,
where $|v|_\theta = \sup_{a \in \mathcal{A}} \sup_{x \neq y \in a} |v(x)-v(y)| / d_\theta(x,y)$,
where $d_\theta(x,y) = \theta^{s(x,y)}$ for some $\theta\in (0,1)$,
and $$s(x,y) = \min\{ n : F^n(x) \text{ and } F^n(y) \text{ are in different elements of } \mathcal{A}\}$$
is the separation time.
We recall that $1$ is an isolated eigenvalue in the spectrum of $R$ and that $\|R(1_{\{\varphi= n\}})\|\ll \mu_Y(\varphi=n)$
 (see~\cite{AaronsonDenker01} and~\cite{Sarig02}). Throughout we let $\Pi$ be the eigenprojection associated with the eigenvalue $1$; in particular, $\Pi v= \int_Y v\, d\mu_Y$.

Let $g:\Delta\to \R$ and let $g_Y$ be its induced version on $Y$. 
Let $s\ge 0$ and define the following transfer operators that describe the general resp.\ the first return
to the base $Y$:
\begin{align*}
T_{n,s}v &:= 1_{Y} P^n(1_{Y}e^{-sg_n}v),\enspace n\ge0; \\
R_{n,s}v &:= 1_{Y} P^n(1_{\{\varphi=n\}}e^{-sg_n}v)=R(1_{\{\varphi= n\}} e^{-sg_Y}v),\enspace n\ge1.
\end{align*}
As in~\cite{Sarig02, Gouezel04,  Gouezel05}, $T_{n,s}=\sum_{j=1}^{n} R_{j,s} \,T_{n-j, s}$. Note that $\sum_n R_{n,0}=R$. 
Also given $R(z, s)v:= R(z^\varphi e^{-s g_Y} v)$ we have $R(z, s)v=\sum_{n\ge 1} R_{n,s} z^n$.
As shown in~\cite{Sarig02},  $\|R_{n,0}\|\ll \mu_Y(\varphi=n)$.

Since $\|R_{n,s}-R_n\|\to 0$ as $s\to 0$ and $\|R_{n,s}\|\ll \mu_Y(\varphi=n)$, ~\cite[Theorem 2.1]{Gouezel04} ensures that: i) $R(z,s)$ is a continuous perturbation of $R(z)$ and 
ii) since $R(z)$ is close to $R(1)$, we have that $R(z,s)$ is close to $R(1,0)$ for $z$ sufficiently close to $1$ and $s$ small enough.
Thus, the operator $R(z,s)$ has an eigenvalue $\lambda(1,0)=1$, which is isolated in the spectrum
of $R(1,0)=R$ and the family of eigenvalue $\lambda(z,s)$ is continuous in a neighborhood of $(1,0)$. 
As a consequence,  there exist $\eps_0>0$ so that $(I-R(z,s))^{-1}$ is well defined for all  $z\in\bar\D\setminus\{1\}$ and $s\le \eps_0$
and $(I-R(z,s))^{-1}=\sum_{n\ge 0} T_{n,s} z^n$. As recalled in Lemma~\ref{lemma-Ggen} below, the asymptotic behavior of $T_{n,s}$ as $n\to\infty$ for $s\le \eps_0$
can be understood as the $n$-th Fourier coefficient of  $(I-R(z,s))^{-1}$.

Following~\cite{Gouezel05}, to understand the behaviour of $P_s^n v= P^n(e^{-sg_n}v)$ (for $s$ small enough) 
when acting on functions supported on the whole of $\Delta$ via 
the behaviour of $T_{n,s}$ (acting on functions supported on $Y$), we need to define several operators that describe the action of 
$P_s^n$ outside $Y$. Define the transfer operators associated with the end resp.\ beginning
of an orbit on the tower as
\begin{align*}
A_{n,s}v & :=  P^n(1_{\{ \varphi> n \} } e^{-sg_n}v ), \quad  n \geq 0,
\\
B_{n,s}v  & := \begin{cases}
1_{Y} P^n(1_{\{ \varphi_\Delta = n  \} \setminus Y }e^{-sg_n} v ), &  n \geq 1, \\
1_{Y}v, & n = 0.
\end{cases}
\end{align*}
Also, define the transfer operator associated with orbits that do not see the base of the tower by
$$
C_{n,s} v :=  P^n(1_{\{ \varphi_\Delta > n \}\setminus Y } e^{-sg_n}v), \quad n \geq 0.
$$
As noticed in~\cite{Gouezel05}, the following equation describes
the relationship between $T_{n,s}$ and $P_s^n$:
\begin{equation}
\label{eq-Pn}
  P_s^n = \sum_{n_1+n_2+n_3=n} A_{n_1,s}T_{n_2,s}B_{n_3,s} + C_{n,s}.
\end{equation}

\subsection{Relating limit laws for $F$ to limit laws for $f_\Delta$}
\label{subsec:rellimth}

\begin{lemma}{~\cite[Theorem 2.1]{Gouezel04b}} 
\label{lemma-Ggen}Suppose that $\sum_{n\ge 1}\mu_Y(\varphi>n)<\infty$ and recall that $\bar\varphi=\int_Y\varphi\, d\mu_Y$.  Assume that $1-\lambda(1,s)=G(s)(1+o(1))$ as $s\to 0$, where $G(s):[0,\infty)\to [0,\infty)$
with $G(s)=0$, for $s=0$ and $G(s)>0$, for $s>0$.
Then there exist $\eps_0>0$ and two functions $q(s)\to 0$ as $s\to 0$, 
$r(n)\to 0$ as $n\to\infty$ so that for all $s\le \eps_0$ and all $n\ge 1$,
\[
\|T_{n,s}-\frac{1}{\bar\varphi}\Big(1-\frac{G(s)}{\bar\varphi}\Big)^n \Pi\|\le q(t)+r(n).
\]
\end{lemma}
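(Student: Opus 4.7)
The plan is to extract the $n$-th Taylor coefficient of the resolvent, using the identity $(I-R(z,s))^{-1}=\sum_{n\ge 0}T_{n,s}z^n$ together with Nagaev--Guivarc'h style spectral perturbation, as in \cite{AaronsonDenker01, Gouezel04}. Since $R=R(1,0)$ is Gibbs--Markov with $1$ as an isolated simple eigenvalue, and since $\|R(z,s)-R\|\to 0$ as $(z,s)\to(1,0)$, one obtains a decomposition
\[
R(z,s)=\lambda(z,s)\Pi(z,s)+N(z,s),
\]
holomorphic in $z$ and continuous in $s$, with $\Pi(z,s)$ a rank-one projection, $\lambda(1,0)=1$, $\Pi(1,0)=\Pi$, and $\|N(z,s)\|\le \rho<1$ uniformly on $\{|z|\le 1+\delta\}\times\{s\le\eps_0\}$. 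Splitting the resolvent,
\[
(I-R(z,s))^{-1}=\frac{\Pi(z,s)}{1-\lambda(z,s)}+(I-N(z,s))^{-1}(I-\Pi(z,s)),
\]
the non-principal summand is holomorphic on a uniform annulus around the unit disk, so its $n$-th Taylor coefficient is $O((1+\delta)^{-n})$ and absorbed into $r(n)$.

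For the principal summand, I would first compute $\partial_z\lambda(1,0)=\bar\varphi$ by differentiating $R(z,0)v=R(z^\varphi v)$ at $z=1$ with $v=\mathbf{1}$ and pairing against the left eigenvector $\mu_Y$: $\partial_z\lambda(1,0)=\int R(\varphi)\,d\mu_Y=\int \varphi\, d\mu_Y=\bar\varphi$. Combined with the hypothesis $\lambda(1,s)=1-G(s)(1+o(1))$, this yields
\[
\lambda(z,s)=1-G(s)(1+o(1))+\bar\varphi(z-1)+O((z-1)^2)
\]
and hence a unique simple pole at $z_0(s)=1+G(s)/\bar\varphi+o(G(s))$, lying just outside the unit disk for $s>0$ small. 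Extracting the residue,
\[
[z^n]\,\frac{\Pi(z,s)}{1-\lambda(z,s)}=\frac{\Pi(z_0(s),s)}{\partial_z\lambda(z_0(s),s)}\,z_0(s)^{-n-1}+O((1+\delta')^{-n}),
\]
and using $\Pi(z_0(s),s)\to\Pi$, $\partial_z\lambda(z_0(s),s)\to\bar\varphi$ as $s\to 0$, together with the Taylor expansion $z_0(s)^{-n}=(1-G(s)/\bar\varphi)^n(1+o(1))$, recovers the target main term $\tfrac{1}{\bar\varphi}(1-G(s)/\bar\varphi)^n\Pi$.

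The main obstacle is obtaining the \emph{additive} splitting $q(s)+r(n)$ rather than a multiplicative one. Comparing $z_0(s)^{-n}$ with $(1-G(s)/\bar\varphi)^n$ introduces a factor $(1+o(G(s)))^n$ which is not uniformly close to $1$ once $n$ is allowed to grow with $s$. I would handle this by a two-regime argument: for $n$ up to a threshold $n_0(s)$ chosen so that $n_0(s)\cdot o(G(s))\to 0$, continuity of the spectral data in $s$ directly yields error $o(1)$ as $s\to 0$, absorbed into $q(s)$; for $n>n_0(s)$, both $T_{n,s}$ and its approximation are already of size $(1-G(s)/\bar\varphi)^{n_0(s)}$ or smaller, so the residual error is controlled by a quantity $r(n)$ depending only on $n$. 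Combining the two bounds yields the desired additive form on the full parameter range $\{s\le\eps_0\}\times\N$.
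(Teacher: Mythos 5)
Your approach has a fatal structural flaw: you want to extend $R(z,s)$ analytically to an annulus $\{|z|\le 1+\delta\}$ and extract $T_{n,s}$ as a residue from a pole $z_0(s)$ lying outside the unit disk. But $R(z,s)=\sum_{n\ge 1}R_{n,s}z^n$ with $\|R_{n,0}\|\asymp\mu_Y(\varphi=n)$, and (H0) gives $\mu_Y(\varphi>n)\asymp n^{-\beta}\ell(n)$ with $\beta\in(1,2)$, so the power series defining $R(z,0)$ has radius of convergence \emph{exactly} one. For $s>0$ fixed, the damping $e^{-sg_Y}$ does give a radius $e^{cs}>1$ (since $g_Y\approx C\varphi$), but this shrinks to one as $s\to 0$. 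There is therefore no uniform $\delta>0$ on which your Nagaev--Guivarc'h decomposition $R(z,s)=\lambda(z,s)\Pi(z,s)+N(z,s)$ holds with $\|N\|\le\rho<1$; it fails already at $s=0$. Consequently the non-principal part is not $O((1+\delta)^{-n})$ and cannot be absorbed into an $s$-independent $r(n)$, the pole $z_0(s)$ is not accessible by analytic continuation when $s$ is small, and the residue identity you invoke does not exist. Your final ``two-regime'' fix does not rescue this: even in the regime $n\le n_0(s)$ you still rely on the residue representation, which is unavailable, and in the other regime you would need a bound on $T_{n,s}$ that is independent of $s$, which again requires holomorphy past the unit circle.

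The paper's proof sidesteps this by deferring to Gou\"ezel's operator-renewal argument, which works entirely on $\bar\D$. One writes $(I-R(z,s))^{-1}$ and compares it to the scalar model $\bigl(1-\bar\varphi^{-1}(1-\bar\varphi^{-1}G(s))z\bigr)^{-1}$; the $n$-th coefficients of the difference are controlled via a Wiener-type $\ell^1$ estimate on the Fourier coefficients (Gou\"ezel's Lemma~2.5), together with the summability $\sum_n\mu_Y(\varphi>n)<\infty$. This Fourier/renewal mechanism is precisely what replaces the residue calculus that polynomial tails make impossible. If you want to reconstruct the proof, the ingredients you need are: (i) $T_{n,0}=\bar\varphi^{-1}\Pi+E_n$ with $\|E_n\|\to 0$; (ii) continuity of $s\mapsto R(z,s)$ in operator norm uniformly in $z\in\bar\D$ plus the hypothesis $1-\lambda(1,s)=G(s)(1+o(1))$; and (iii) the operator Wiener lemma bounding $\sum_j\|F_j(s)\|$ where $F_j(s)$ are the coefficients of the difference of the two renewal resolvents. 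None of these uses, or can be replaced by, analytic continuation beyond $|z|=1$.

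Two secondary points. First, the identity $\partial_z\lambda(1,0)=\bar\varphi$ is correct and is indeed used, but to compute a one-sided (radial) derivative inside the disk, not to locate a pole outside it. Second, you write that the claimed additive bound $q(s)+r(n)$ is ``the main obstacle''; in the renewal approach it comes out naturally, since the decomposition is into a term $\sum_j\|F_j(s)\|=:q(s)$ that vanishes as $s\to 0$ and a term involving $\|E_n\|$ plus a geometric tail that vanishes as $n\to\infty$, with no coupling between the two.
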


\begin{proof}
This follows from~\cite[Theorem 2.1]{Gouezel04b} in the setup of Young towers with summable returns to the base.
As already  recalled in subsection~\ref{sec-gentower}, the hypotheses of ~\cite[Theorem 2.1]{Gouezel04b} are satisfied. In particular,  there exist $\eps_0>0$
so that $(I-R(z,0))$ is ivertible for all $z\in\bar\D\setminus\{1\}$ and for all $s\le \eps_0$.

Although the statement of ~\cite[Theorem 2.1]{Gouezel04b} is in terms 
of complex perturbations $e^{itg}$ as opposed to $e^{-sg}$  as used here and it is slightly different, the invertibility of $1-\lambda(1,s)$ is all that is required
in the proof of~\cite[Theorem 2.1]{Gouezel04b}.  Our assumption on
$1-\lambda(1,s)$ ensures that $1-\lambda(1,s)$ is invertible for all $s\ne 0$. Hence, the argument of ~\cite[Theorem 2.1]{Gouezel04b} applies as summarized below, yielding the conclusion of the present lemma.
We note that it is essential in the  proof of ~\cite[Theorem 2.1]{Gouezel04b} that
the first perturbation $z^\varphi$ is controlled on $\bar\D$; the perturbation $e^{-sg}$ can be controlled for $s\in \mathbb{C}$ or $s\in \R_{+}$ depending on the desired statement.

As in~\cite{Gouezel04} (see also~\cite[Lemma 2.5]{Gouezel04b}), $T_{n,0}=\frac{1}{\bar\varphi}\Pi+ E_n$, where $\|E_n\|\to 0$; in particular, one can fix $N$ and $\eps>0$ so that   $\|E_n\|\le\eps$ for all $n>N$.
It follows from~\cite[Proof of Proposition 2.10]{Gouezel04b} that the function $r(n)$ in the statement of the present lemma is so that
 $r(n)\le \eps+C_0 N G(s)\Big(1-\frac{G(s)}{\bar\varphi}\Big)^{n-N}\le C_1\eps$, for some $C_0, C_1>0$ as $n\to\infty$. Also, 
it follows from~\cite[Proof of Proposition 2.10]{Gouezel04b} that the function $q(t)$ in the statement of the present lemma is so that
$q(s)\le \sum_{j=0}^n \|F_j(s)\|$, where $F_n(s)$ is the $n$-th coefficient of $\Big(\frac{I-R(z,s)}{1-\bar\varphi^{-1}(1-\bar\varphi^{-1}G(s)z)}\Big)^{-1}-\Big(\frac{I-R(z,0)}{1-z}\Big)^{-1}$. By~\cite[Lemma 2.5]{Gouezel04b}, $\sum_n \|F_n(s)\| \le C F(s)$,
for some $C>0$ and $F(s)\to 0$, as $s\to 0$. Hence, $q(s)\to 0$ as $s\to 0$. ~\end{proof}

As in~\cite{Gouezel04, Gouezel04b}) we use  Lemma~\ref{lemma-Ggen} and~\eqref{eq-Pn} to understand the asymptotic behaviour of $\int_\Delta P_s^n 1\, d\mu_\Delta=\E_{\mu_\Delta}(e^{-sg_n})$,
which is the real Laplace transform of the observable $g$. 
Provided that there exists some sequence $a_n\to\infty$ so that $\frac{1}{\bar\varphi}\Big(1-\frac{G(s/a_n)}{\bar\varphi}\Big)^n$ can be related to a real Laplace transform $\hat W(s)$
of a non trivial distribution $W$, we will infer that $\mu_Y(\frac{g_n}{a_n}<a)$ can be related to $\P(X<a)$, where $X$ is distributed according to $W$.

We recall the estimates below to be used in obtaining the asymptotic of $\int_\Delta P_s^n 1\, d\mu_\Delta$ (as in Proposition~\ref{prop-lthto} below).
\begin{lemma}
\label{lemma-BCn} Let $v:\Delta\to\R$ so that $v\in L^\infty(\mu_\Delta)$. There exists  $C>0$ so that for all $n\geq 0$ and for all $s\ge 0$, $|\int_\Delta C_{n,s} v\, d\mu_\Delta|\leq C \mu_Y(\varphi>n)\|v\|_{L^\infty(\mu_\Delta)}$.

Assume (H1) and suppose that $v$ is H{\"o}lder on each level of $\Delta$ with $v\in L^\infty(\mu_\Delta)$. There exists  $C>0$ so that for all $n\geq 0$ and for all $s\ge 0$,
$\sum_{j>n}\|B_{j,0} (e^{-sg_j}-1)v\|\le C \mu_Y(\varphi>n)\|v\|_{L^\infty(\mu_\Delta)}$. Moreover, $\int_\Delta B(1,0) v\, d\mu_\Delta= \int_\Delta  v\, d\mu_\Delta$.
\end{lemma}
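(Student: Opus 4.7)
The plan is to prove each of the three assertions separately, with two common tools: the invariance $\int_\Delta Ph\,d\mu_\Delta = \int_\Delta h\,d\mu_\Delta$ (valid since $P$ is the transfer operator for $f_\Delta$), and the product structure $\mu_\Delta(A,i)=\mu_Y(A)$ of the tower measure.

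For the bound on $|\int_\Delta C_{n,s}v\,d\mu_\Delta|$, the first step is to apply invariance of $\mu_\Delta$ under $P^n$ to collapse the integral to $\int_{\{\varphi_\Delta>n\}\setminus Y}e^{-sg_n}v\,d\mu_\Delta$. Since we are in the Laplace-transform setting with $s\ge 0$, after a harmless constant shift one may take $g\ge 0$, so that $|e^{-sg_n}|\le 1$, and the problem reduces to estimating $\mu_\Delta(\{\varphi_\Delta>n\}\setminus Y)$. Unpacking via the column description $\{(y,i):i\ge 1,\,\varphi(y)>n+i\}$, reorganising the sum according to the first returns inside one general return (using (H2) to ensure each level sits either entirely inside or entirely outside $Y$ in the original dynamics) and then applying (H1) in the form $\int_{\{\varphi>n\}}\rho\,d\mu_Y = O(\mu_Y(\varphi>n))$ yields the asserted bound.

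For $\sum_{j>n}\|B_{j,0}(e^{-sg_j}-1)v\|$, I would first identify $B_{j,0}v$ with $R\bigl(1_{\{\varphi>j\}}\tilde v_j\bigr)$ on $Y$, where $\tilde v_j(y):=v(y,\varphi(y)-j)$, using that $\{\varphi_\Delta=j\}\setminus Y=\{(y,\varphi(y)-j):\varphi(y)>j\}$ contains exactly one tower point per $y$ with $\varphi(y)>j$, and that $f_\Delta^j$ maps it to $(F(y),0)$. Standard Gibbs--Markov bounds (analogous to $\|R(1_{\{\varphi=n\}}h)\|\ll\mu_Y(\varphi=n)\|h\|_\infty$ as recalled in Subsection 3.4), combined with $|e^{-sg_j}-1|\le 1$ for $s\ge 0$, $g\ge 0$, and the level-wise H\"older control of $v$, give a uniform per-$j$ bound. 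Summing naively gives $\sum_{j>n}\mu_Y(\varphi>j)$, which is generically a factor of order $n$ too large; to reach the sharp target one re-parameterises the double sum (over $j$ and over tower levels) by first-return increments inside a general return and applies (H1) to collapse it to $O(\mu_Y(\varphi>n))$.

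For the identity $\int_\Delta B(1,0)v\,d\mu_\Delta=\int_\Delta v\,d\mu_\Delta$, expand $B(1,0)=\sum_{j\ge 0}B_{j,0}$. The term $j=0$ contributes $\int_Y v\,d\mu_Y$; for $j\ge 1$, invariance under $P^j$ gives $\int_\Delta 1_Y P^j(1_{\{\varphi_\Delta=j\}\setminus Y}v)\,d\mu_\Delta=\int_{\{\varphi_\Delta=j\}\setminus Y}v\,d\mu_\Delta$ (the factor $1_Y$ is redundant because $f_\Delta^j$ lands in $Y$ on this set). Using the disjoint decomposition $\Delta=Y\sqcup\bigsqcup_{j\ge 1}(\{\varphi_\Delta=j\}\setminus Y)$, the sum telescopes to $\int_\Delta v\,d\mu_\Delta$. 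The main obstacle will be the sharp bound in the second assertion: transforming the naive tail sum $\sum_{j>n}\mu_Y(\varphi>j)$, which is typically a factor of $n$ larger than $\mu_Y(\varphi>n)$ under power-law tails, into the sharp bound $O(\mu_Y(\varphi>n))$ demands a careful combinatorial bookkeeping in terms of first returns and essential use of (H1); the other two assertions are more or less direct once the tower structure is unpacked.
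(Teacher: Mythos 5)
Your third assertion (the integral identity) is correct and matches the content of~\cite[Lemma 4.3]{Gouezel05} that the paper cites: noting that $P^j(1_{\{\varphi_\Delta=j\}\setminus Y}v)$ is already supported on $Y$ (so the prefactor $1_Y$ is harmless), invoking invariance of $\mu_\Delta$ under $P^j$, and using the decomposition $\Delta=Y\cup\bigcup_{j\geq 1}(\{\varphi_\Delta=j\}\setminus Y)$ closes it cleanly. The two inequalities, however, have genuine gaps. For the $C_{n,s}$ bound you invoke (H1) and (H2), but the first assertion of the lemma assumes neither — (H1) is only introduced for the $B_{n,s}$ estimate, and the paper itself treats the $C_{n,s}$ bound as ``immediate,'' meaning the one-line consequence of invariance, $|\int_\Delta C_{n,s} v\,d\mu_\Delta|\leq\|v\|_\infty\,\mu_\Delta(\{\varphi_\Delta>n\}\setminus Y)=\|v\|_\infty\sum_{j>n}\mu_Y(\varphi>j)$. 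Your suggestion that (H1) then upgrades $\sum_{j>n}\mu_Y(\varphi>j)$ to $O(\mu_Y(\varphi>n))$ does not go through: (H1) controls $\int_{\{\varphi>n\}}\rho\,d\mu_Y$, whereas $\sum_{j>n}\mu_Y(\varphi>j)=\int_{\{\varphi>n+1\}}(\varphi-n-1)\,d\mu_Y$, and $\varphi-n-1$ is not dominated by $\rho$ in general (a single heavy-tailed first return makes $\rho=1$ while $\varphi-n$ is arbitrarily large).

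For the $B_{n,s}$ bound you correctly diagnose the obstacle — the naive per-$j$ Gibbs--Markov estimate $\|B_{j,0}\cdot\|\lesssim\mu_Y(\varphi>j)\|\cdot\|_\infty$ combined with $|e^{-sg_j}-1|\leq 1$ only gives $\sum_{j>n}\mu_Y(\varphi>j)$, which for a power-law tail with $\beta\in(1,2)$ is a factor of order $n$ too large — but the step you announce (``re-parameterise the double sum by first-return increments and apply (H1)'') is exactly the content of~\cite[Lemma 6.5]{BT18} to which the paper defers, and it is not carried out here. That argument uses the level-wise H\"older regularity of $v$ and the reinduced time $\rho$ in an essential way, not merely $L^\infty$ bounds; a self-contained proof would need to reproduce that decomposition rather than state it as an intention. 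So: the last assertion is correct and essentially coincides with the cited route; the $C_{n,s}$ part rests on hypotheses the lemma does not grant and an unjustified reduction; and the $B_{n,s}$ part identifies the right difficulty but leaves the resolving estimate unproved.
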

\begin{proof}The statement on $C_{n,s}$ is immediate. For the statement on $B_{n,s}$ under (H1) see, for instance,~\cite[Lemma 6.5]{BT18}.
For a comparable estimate, exploiting $s\to 0$,  without assuming (H1) we refer to~\cite[Lemmas 4.2 and 4.3]{Gouezel05}.For the statement on the integral, see~\cite[Lemma 4.3]{Gouezel05}.~\end{proof}

\begin{lemma}
\label{lemma-An} Let $v, w:\Delta\to\R$  so that 
  $\| (1_Yv) \|_{L^\infty(\mu_Y)}<\infty$, $w\in L^\infty(\mu_\Delta)$.
Assume (H1) and (H2). Then there exists $C>0$ so that for all $n\geq 0$ and for all $s\ge 0$,
\[
\Big|\int_\Delta \sum_{j\geq n} A_{j,0} (e^{-sg_j}-1) (1_Y v)  w\, d\mu_\Delta\Big|\leq C \mu_Y(\varphi>n) \|(1_Y v)\|_{L^\infty(\mu_Y)}\,  \|w\|_{L^\infty(\mu_\Delta)}.
\]
\end{lemma}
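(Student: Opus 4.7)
The plan is to dualize the integral over $\Delta$ to an integral over the base $Y$ via the transfer operator $P$, use the re-inducing structure (H2) to decompose the orbit of a point in $\{\varphi>n\}$ into first-return excursions, and then invoke (H1) to extract the sharp $\mu_Y(\varphi>n)$ bound uniformly in $s$.

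\textbf{Step 1 (Reduction to $Y$).} Since $A_{j,0}u=P^j(1_{\{\varphi_\Delta>j\}}u)$, the duality of $P$ w.r.t.\ $\mu_\Delta$ yields
\[
\int_\Delta A_{j,0}\bigl[(e^{-sg_j}-1)(1_Yv)\bigr]\cdot w\,d\mu_\Delta=\int_Y 1_{\{\varphi>j\}}\bigl(e^{-sg_j}-1\bigr)\,v\,(w\circ f_\Delta^j)\,d\mu_Y,
\]
using that $1_Yv$ is supported on $Y$, where $\varphi_\Delta=\varphi$. Exchanging the sum and the integral via Fubini, it suffices to bound
\[
|I|,\qquad I:=\int_{\{\varphi>n\}}v(y)\,\Phi_n(y)\,d\mu_Y,\qquad \Phi_n(y):=\sum_{j=n}^{\varphi(y)-1}\bigl(e^{-sg_j(y,0)}-1\bigr)\,w(f_\Delta^j(y,0)).
\]

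\textbf{Step 2 (Per-excursion cancellation via (H2)).} By (H2) the orbit of $(y,0)$ visits $Y$ only at the first-return times $0=\tau_0(y)<\tau_1(y)<\dots<\tau_{\rho(y)}(y)=\varphi(y)$, and on each block $j\in[\tau_k,\tau_{k+1})$ the iterate $f_\Delta^j(y,0)$ traverses a single tower column of height $\tau(F^ky)$ over $F^ky\in Y$. I would split $\Phi_n(y)$ into at most $\rho(y)+1$ partial and complete block-sums and, on each block, perform an Abel-type summation by parts pairing the monotone envelope $j\mapsto e^{-sg_j}$ (monotone in $j$ because, after a constant shift of $g=g_X\circ\pi$ absorbed in the centring step of Subsection~\ref{subsec:rellimth}, one may take $g\ge 0$) against the partial sums of $w$ along the column. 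Since the total variation of the envelope across a block is at most $1$, this manipulation---carried out as in the estimate for $B_{n,s}$ given by \cite[Lemma~6.5]{BT18}---yields $|\Phi_n(y)|\le C\,(1+\rho(y))\,\|w\|_{L^\infty(\mu_\Delta)}$ uniformly in $s\ge 0$ and in the column lengths.

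\textbf{Step 3 (Conclude using (H1)).} Inserting the Step~2 bound into $I$ gives
\[
|I|\le C\,\|1_Yv\|_{L^\infty(\mu_Y)}\,\|w\|_{L^\infty(\mu_\Delta)}\int_{\{\varphi>n\}}(1+\rho)\,d\mu_Y,
\]
and (H1) bounds the right-hand integral by a constant multiple of $\mu_Y(\varphi>n)$, giving the claim.

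The hardest point will be Step~2: extracting a per-excursion bound that is uniform in both $s$ and the column length $\tau(F^ky)$. The naive bound $|e^{-sg_j}-1|\le 1$ combined with $|w|\le\|w\|_\infty$ contributes an amount proportional to $\tau(F^ky)$ per block, and summing over blocks would give $\int_{\{\varphi>n\}}\varphi\,d\mu_Y\gg\mu_Y(\varphi>n)$ and lose the claimed estimate. Trading column length for the total variation $\le 1$ of the monotone envelope via summation by parts is precisely the cancellation that survives uniformly in $s$; this is why (H1) (and not merely $s\to 0$, as in the alternative approach of \cite[Lemmas~4.2--4.3]{Gouezel05}) is the relevant hypothesis, and the bookkeeping for the partial boundary blocks at $j=n$ and $j=\varphi-1$ is routine but tedious.
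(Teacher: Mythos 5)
Your Step~1 reduction, via duality of $P$ and Fubini, to
\[
I=\int_{\{\varphi>n\}}v(y)\,\Phi_n(y)\,d\mu_Y,\qquad
\Phi_n(y)=\sum_{j=n}^{\varphi(y)-1}\bigl(e^{-sg_j(y,0)}-1\bigr)\,w\bigl(f_\Delta^j(y,0)\bigr),
\]
is correct and is the natural first move. The gap is in Step~2: the asserted pointwise bound $|\Phi_n(y)|\le C\,(1+\rho(y))\,\|w\|_{L^\infty(\mu_\Delta)}$, uniform in $s\ge0$, is false, and Abel summation cannot produce it.

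Take $w\equiv1$ and $g\equiv1$ (legitimate: $g_X\equiv1$ is a bounded positive observable satisfying (H5), and the lemma allows arbitrary $w\in L^\infty(\mu_\Delta)$). Then $g_j(y,0)=j$ and $\Phi_n(y)=\sum_{j=n}^{\varphi(y)-1}(e^{-sj}-1)$, so for any fixed $s>0$ one has $|\Phi_n(y)|\ge(1-e^{-sn})(\varphi(y)-n)$. But $\rho(y)$, the number of first-return excursions making up $\varphi(y)$, can equal $1$ while $\varphi(y)=\tau(y)$ is arbitrarily large; for such $y$ the claimed bound $|\Phi_n(y)|\le 2C\|w\|_\infty$ fails badly. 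This is not a curable technicality: (H1) controls $\int_{\{\varphi>n\}}\rho\,d\mu_Y$ precisely because $\rho$ is typically much smaller than $\varphi$, whereas your integrand forces $\int_{\{\varphi>n\}}(\varphi-n)\,d\mu_Y\gg\mu_Y(\varphi>n)$. Re-inducing via (H2) does not help, because within a single first-return column the summand $(e^{-sg_j}-1)\,w\circ f_\Delta^j$ has no intrinsic cancellation.

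The Abel-summation step is where the argument actually breaks. Writing $\sum_{j\in\mathrm{block}}u_jv_j=U_mv_m-\sum_j U_j(v_{j+1}-v_j)$ with $u_j=w\circ f_\Delta^j$ and the monotone envelope $v_j=e^{-sg_j}-1$ trades the total variation of $v_j$ (indeed $\le1$) for the partial sums $U_j$ of $w$ along the column; but $U_j$ has no better bound than $(j+1)\|w\|_\infty$ for a generic $L^\infty$ function $w$ (e.g.\ $w\equiv1$), so you simply reacquire the column length $\tau(F^ky)$ as a factor, exactly as in the brutal bound. The analogy with \cite[Lemma~6.5]{BT18} for $B_{n,s}$ is misleading: $B_{n,s}$ is supported on the single level $\{\varphi_\Delta=n\}\setminus Y$, so its $n$-th term is already $O(\mu_Y(\varphi>n))$ before summing, whereas $A_{j,0}$ integrates over all of $\{\varphi>j\}$ and the tail sum genuinely accumulates all intermediate levels. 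The paper itself only cites \cite[Lemma~6.2]{BT18} here, so there is no internal argument to compare against; but the proof you give does not establish the lemma.
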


\begin{proof}This is a simplified version of~\cite[Lemma 6.2]{BT18}.  For a comparable estimate, exploiting $s\to 0$, without assuming (H1), we refer to~\cite[Lemmas 4.2]{Gouezel05}.~\end{proof}

We can now state

\begin{prop}
\label{prop-lthto} Assume (H1) and (H2).  Assume that $1-\lambda(1,s)=G(s)(1+o(1))$ as $s\to 0$, where $G(s): (0,\infty) \to (0,\infty)$
with $G(s)=0$, for $s=0$ and $G(s)>0$, for $s>0$.
Then there exist $\eps_0>0$ and two functions $q_0(s)\to 0$ as $s\to 0$,  $r_0(n)\to 0$ as $n\to\infty$ so that for all $s\le \eps_0$ and all $n\ge 1$,
\[
\Big|\int_\Delta P_s^n 1\, d\mu_\Delta-\frac{1}{\bar\varphi}\Big(1-\frac{G(s)}{\bar\varphi}\Big)^n \Big|\le q_0(s)+r_0(n).
\]
\end{prop}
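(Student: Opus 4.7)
The plan is to apply the decomposition \eqref{eq-Pn} to $v=1$, integrate over $\Delta$, substitute the spectral approximation from Lemma~\ref{lemma-Ggen} into the $T_{n_2,s}$ factor, and then control the remainder using the tail bounds from Lemmas~\ref{lemma-BCn} and~\ref{lemma-An}. The contribution of $C_{n,s}$ is handled directly: by Lemma~\ref{lemma-BCn}, $|\int_\Delta C_{n,s} 1\, d\mu_\Delta|\leq C\mu_Y(\varphi>n)\to 0$ under (H0) (since $\beta>1$), and this is absorbed into $r_0(n)$.

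For the main triple sum, I write $T_{n_2,s}=\frac{1}{\bar\varphi}(1-\frac{G(s)}{\bar\varphi})^{n_2}\Pi + E_{n_2,s}$ with $\|E_{n_2,s}\|\leq q(s)+r(n_2)$ from Lemma~\ref{lemma-Ggen}. Using $\Pi w = (\int_Y w\, d\mu_Y)\cdot 1_Y$, each leading-order summand factors as
\[
\frac{1}{\bar\varphi}\Big(1-\tfrac{G(s)}{\bar\varphi}\Big)^{n_2}\, a_{n_1}(s)\, b_{n_3}(s),
\]
where $a_{n_1}(s):=\int_\Delta A_{n_1,s} 1_Y\, d\mu_\Delta$ and $b_{n_3}(s):=\int_Y B_{n_3,s} 1\, d\mu_Y$. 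Summing over $n_1+n_2+n_3=n$ and factoring out the geometric factor in $n_2$ gives
\[
\frac{1}{\bar\varphi}\Big(1-\tfrac{G(s)}{\bar\varphi}\Big)^n \sum_{n_1+n_3\leq n} \frac{a_{n_1}(s)\,b_{n_3}(s)}{\big(1-\frac{G(s)}{\bar\varphi}\big)^{n_1+n_3}},
\]
and the inner sum is evaluated at $s\to 0$, $n\to\infty$ via the identities $\sum_{n\geq 0} a_n(0) = \sum_{n\geq 0} b_n(0) = \bar\varphi$ (both equal $\sum_n\mu_Y(\varphi>n) = \bar\varphi$), together with a continuity argument in $s$, to extract the correct leading constant.

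The main obstacle is controlling the error from $E_{n_2,s}$ in the triple convolution. Since $q(s)+r(n_2)$ is not summable in $n_2$, the naive estimate yields an error proportional to $n$. The remedy, as in~\cite{Gouezel05}, is to split the triple sum based on which of $n_1, n_2, n_3$ is largest: when $n_2\geq n/3$ one uses $r(n_2)\to 0$ combined with the summability of $\sum|a_{n_1}(s)|$ and $\sum b_{n_3}(s)$ (valid under (H1) because $\sum\mu_Y(\varphi>n)<\infty$); when $n_1$ or $n_3$ exceeds $n/3$, one uses the tail estimates $\sum_{m>N}|a_m(0)|, \sum_{m>N}b_m(0) = O(\mu_Y(\varphi>N))$ from Lemmas~\ref{lemma-An} and~\ref{lemma-BCn}. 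The $s$-dependent residual aggregates into $q_0(s)$ and the $n$-dependent one into $r_0(n)$. Verifying that the convolution of the tails of $A$ and $B$ produces exactly the announced constant factor in front of $\bigl(1-\frac{G(s)}{\bar\varphi}\bigr)^n$, rather than an extraneous multiplicative factor, is the most delicate step and is where the argument parallels (and extends) the analysis in~\cite{Gouezel05}.
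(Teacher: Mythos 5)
Your overall strategy matches the paper's: apply the decomposition \eqref{eq-Pn} with $v=1$, substitute the approximation from Lemma~\ref{lemma-Ggen} into the $T$-factor, control remainders via Lemmas~\ref{lemma-BCn} and~\ref{lemma-An}, and absorb the $C_{n,s}$ contribution into $r_0(n)$. The bookkeeping differs, however, and the difference matters. The paper works at the generating-function level: it writes $P(z,s)=A(z,s)T(z,s)B(z,s)+C(z,s)$ and uses the algebraic identity
\[
A(z,s)T(z,s)B(z,s)=A(1,s)T(z,s)B(1,s)+\bigl(A(z,s)-A(1,s)\bigr)T(z,s)B(z,s)+A(1,s)T(z,s)\bigl(B(z,s)-B(1,s)\bigr).
\]
The leading term $I_T=A(1,s)T(z,s)B(1,s)$ then has $n$-th coefficient $\int_\Delta A(1,s)\,T_{n,s}\,B(1,s)1\,d\mu_\Delta$ --- a \emph{single} sum, not a triple convolution --- so Lemma~\ref{lemma-Ggen} applies directly. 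The error terms involve $\tfrac{A(z,s)-A(1,s)}{z-1}$ and $\tfrac{B(z,s)-B(1,s)}{z-1}$, whose $k$-th coefficients are tail sums of $A$ resp.\ $B$, i.e.\ exactly the quantities that Lemmas~\ref{lemma-An} and~\ref{lemma-BCn} bound by $O(\mu_Y(\varphi>k))$. This sidesteps the need for any case analysis on which of $n_1,n_2,n_3$ dominates.

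Your direct coefficient-level treatment of the triple sum is correct in spirit but has a genuine soft spot in the factorization step. After pulling out $(1-G(s)/\bar\varphi)^n$, you are left with $\sum_{n_1+n_3\le n} a_{n_1}(s)b_{n_3}(s)\bigl(1-\frac{G(s)}{\bar\varphi}\bigr)^{-(n_1+n_3)}$ and you claim this is evaluated via $\sum_m a_m(0)=\sum_m b_m(0)$ plus continuity in $s$. That does not quite work as written: for any fixed $s>0$ the factor $\bigl(1-\frac{G(s)}{\bar\varphi}\bigr)^{-(n_1+n_3)}$ grows exponentially in $n_1+n_3$, so the truncated sum is $n$-dependent and does not simply ``converge'' to a constant as $n\to\infty$. (The Proposition is stated for \emph{all} $n\ge1$ and $s\le\eps_0$, so one cannot restrict to the scaling regime where $n\,G(s)$ is bounded.) The correct route is to estimate the \emph{difference}
\[
\frac{1}{\bar\varphi}\sum_{n_1+n_3\le n} a_{n_1}(s)b_{n_3}(s)\Bigl[\bigl(1-\tfrac{G(s)}{\bar\varphi}\bigr)^{n-n_1-n_3}-\bigl(1-\tfrac{G(s)}{\bar\varphi}\bigr)^n\Bigr]
\]
plus the contribution from $n_1+n_3>n$, splitting according to whether $n_1+n_3$ is small (where the bracket is $O((n_1+n_3)G(s))$, absorbed into $q_0$) or large (where $a_{n_1},b_{n_3}$ tails kill it, absorbed into $r_0$). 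You do mention the ``split by largest index'' remedy for the $E_{n_2,s}$ error term, and the same device is what is needed for the leading-order factorization; you should apply it there too rather than asserting convergence of the inner sum. With that repaired, your argument is a valid, if more laborious, alternative to the paper's generating-function identity.
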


\begin{proof} For $z\in\bar\D$, define the operator power series $P(z,s)= \sum_{n\ge 0} P_s^n z^n$ and similarly for
$A(s,z), T(s,z), B(s,z), C(s,z)$. By~\eqref{eq-Pn},  we have $P(z,s)= A(z,s)T(z,s)B(z,s) + C(z,s)$, and so,
\begin{equation*}
\int_\Delta P(z,s) 1\, d\mu_\Delta=\int_\Delta A(z,s)T(z,s)B(z,s)1\, d\mu_\Delta+\int_\Delta  C(z,s)1\, d\mu_\Delta.
\end{equation*}
By Lemmas~\ref{lemma-An} and~\ref{lemma-BCn}, both terms of the RHS are well defined for $z\in\bar\D\setminus\{1\}$ and $s\ge 0$.

Let 
\[
I_A(z,s)=\int_\Delta \frac{A(z, s)-A(1, s)}{z-1} (z-1)T(z,s)B(z,s)1\, d\mu_\Delta,
\]
\[
I_B(z,s)=\int_\Delta A(1, s)(z-1)T(z,s)\frac{B(z,s)-B(1,s)}{z-1}1\, d\mu_\Delta,
\]
and 
\[
I_T(z,s)=
\int_\Delta A(1, s)(z-1)T(z,s)B(1,s)1\, d\mu_\Delta.
\]
Thus, we can write
\begin{align*}
\int_\Delta P(z,s) 1\, d\mu_\Delta=I_T(z,s)+I_A(z,s)+I_B(z,s)+\int_\Delta  C(z,s)1\, d\mu_\Delta.
\end{align*}
By Lemma~\ref{lemma-BCn},  the $n$-th coefficient of $\int_\Delta  C(z,s)1\, d\mu_\Delta$ is $O(\mu_Y(\varphi>n))$.  
By  Lemma~\ref{lemma-Ggen}, $(z-1)T(z,s) v=\frac{1}{\bar\varphi}\sum_n \Big(1-\frac{G(s)}{\bar\varphi}\Big)^n z^n \Pi v + \sum_n (q(s)+r(n)) z^n$,
where $\|q(s)\|\to 0$ as $s\to 0$ and  $\|r(n)\|\to 0$ as $n\to\infty$.   Using this, we estimate the $n$-th coefficients of 
$I_T(z,s)$, $I_A(z,s)$  and $I_B(z,s)$.

Note that $\int_\Delta A(1, s)\Pi B(1,s)1\, d\mu_\Delta=\int_\Delta A(1, s)1_Y\, d\mu_\Delta\, \int_\Delta B(1, s)1\, d\mu_\Delta$. 
By Lemma~\ref{lemma-BCn}  and lemma~\ref{lemma-An}, as $s\to 0$,
$\int_\Delta (A(1,s)-A(1,0))1\, d\mu_\Delta \to 0$
and $\int_\Delta (B(1,s)-B(1,1))1\, d\mu_\Delta \to 0$. Thus, 
$K(s):=\int_\Delta (A(1,s)-A(1,0))1_Y\, d\mu_\Delta\,\int_\Delta (B(1,s)-B(1,0))1_Y\, d\mu_\Delta\to 0$ as $s\to 0$. Putting these together,
\begin{align*}
I_T(z,s)-\frac{1}{\bar\varphi}\sum_n \Big(1-\frac{G(s)}{\bar\varphi}\Big)^n z^n
&= \frac{1}{\bar\varphi}\sum_n \Big(1-\frac{G(s)}{\bar\varphi}\Big)^n\, K(s) z^n\\
&+\sum_n \Big(\int_\Delta A(1, s)(q(s)+r(n))B(1,s)1\, d\mu_\Delta\Big) z^n.
\end{align*}
Since $K(s)\to 0$ as $s\to 0$,  the $n$-th coefficient of the first term of the RHS of the previous displayed equality  goes to $0$, as $s\to 0$. The coefficient of the second term is bounded by $q(s)+r(n)$,
as desired.

Next, compute that
\begin{align*}
I_A(z,s)&=\int_\Delta \frac{A(z, s)-A(1, s)}{z-1} \Pi(B(z,s)1)\, d\mu_\Delta\\
&+\int_\Delta \frac{A(z, s)-A(1, s)}{z-1}\,\sum_n (q(s)+r(n)) z^n\, B(z,s)1\, d\mu_\Delta:= I_A^1(z,s)+  I_A^2(z,s).
\end{align*}
Recall that  $B(z,s)1$ is a function supported on $Y$ and note  that 
\[ I_A^1(z,s)=\int_\Delta \frac{A(z, s)-A(1, s)}{z-1} 1_Y\, d\mu_\Delta\int_\Delta \frac{B(z, s)-B(1, s)}{z-1} 1\, d\mu_\Delta.
\]
By Lemma~\ref{lemma-An}, the $n$-th coefficient of the first factor is $O(\mu_Y(\varphi>n))$. By Lemma~\ref{lemma-BCn}, the $n$-th coefficient of the second  factor is $O(\mu_Y(\varphi>n))$.  Hence, the $n$-th coefficient of
the $I_A^1(z,s)$ is $O(\mu_Y(\varphi>n))$.

 The $n$-th coefficient of $I_A^2(z,s)$ is bounded by the convolution of  $\int_\Delta \frac{A(z, s)-A(1, s)}{z-1} 1_Y\, d\mu_\Delta$
and $\sum_n (q(s)+r(n)) z^n\, B(z,s)1$. Since  $\|B_{n,s} 1\|=O(\mu_Y(\varphi>n))$, the $n$-th coefficient, in norm, of $\sum_n (q(s)+r(n)) z^n\, B(z,s)1$ is $O(n\mu_Y(\varphi>n))=O(n^{-(\beta-1)}\ell(n))$.
The $n$-th coefficient of $I_A^2(z,s)$ is $O(n^{-(\beta-1)}\ell(n))$. 

Finally, 
\begin{align*}
I_B(z,s)&=\int_\Delta A(1,s)\Pi(B(z,s)1)\, d\mu_\Delta
+\int_\Delta A(1,s)\,\sum_n (q(s)+r(n)) z^n\, \frac{B(z,s)-B(1,s)}{z-1}1\, d\mu_\Delta.
\end{align*}
The $n$-th coefficient of first term in the previous displayed equation is $O(\|B_{n,s} 1\|)=O(\mu_Y(\varphi>n))$. By Lemma~\ref{lemma-BCn}, the $n$-th coefficient of the second  term
in the previous displayed equation is $O(n \sum_{j>n}\|B_{j,0} (e^{-sg_j}-1)1\|)=O(n \mu_Y(\varphi>n))= O(n^{-(\beta-1)}\ell(n))$.

The conclusion follows taking $q_0(s)=K(s)+r(s)$ and $r_0(n)=r(n)+O(n^{-(\beta-1)}\ell(n))$.~\end{proof}

\subsection{Proof of Theorem~\ref{prop-main}}

Recall that $\lambda(1,s), s\ge 0$ is the family of eigenvalues for $R(1,s) v=R(e^{-sg_Y} v)$. In order to apply Proposition~\ref{prop-lthto} in the proof of Theorem~\ref{prop-main}, we need to understand the asymptotics 
of $1-\lambda(1,s)$ as $s\to 0$.

Throughout this
section we assume that $g_Y$ satisfies  (H5).
By adding a positive constant to the bounded function $g_X$, we can assume without loss of generality that $g_X$ is a positive and restrict 
the proofs below to the case $g_Y:Y\to\R_{+}$.  With this convention, we state

\begin{lemma} 
\label{lemma-1s}Assume (H0) and (H3). Let $g_X:X\to\R_{+}$ be a bounded obervable  and  suppose that 
$\mu_Y(g_Y>t)=C\mu_Y(\varphi>t)(1+o(1))$ as $t\to\infty$. 

Let $(Z_j)_{j\ge 1}$ be  positive i.i.d.\  random variables so that  $\P (Z_1>t)=\mu_Y(g_Y>t)(1+o(1))$
as $t\to\infty$. Let  $(X_j)_{j\ge 1}$ be positive i.i.d.\  random variables so that $\P(X_1>t)=(\bar\varphi)^{-1}\mu_Y(\tau>t)(1+o(1))$, as $t\to\infty$.

For $s\ge 0$,  set $G(s)=1-\E_{\P}(e^{-s Z_1})$ and $\tilde G(s)=1-\E_{\P}(e^{-s X_1})$. Then 
\begin{itemize}
\item[i)]  for all $s>0$, $\tilde G(s)> 0$ and $\tilde G(s)\le (\bar\varphi)^{-1} G(s)\le C \tilde G(s)$, for some $C\ge 1$;
\item[ii)] $1-\lambda(1,s)=G(s)(1+o(1))$ as $s\to 0$.
\end{itemize}
Moreover, if (H4) holds then $G(s)=C\bar\varphi\, \tilde G(s)(1+o(1))$, for $C>0$ as in (H4).~\end{lemma}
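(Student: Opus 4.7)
My plan is to treat the three assertions in sequence using the Laplace representation $1-\E_\P(e^{-sY}) = s\int_0^\infty e^{-st}\P(Y>t)\,dt$ for positive $Y$, combined with Kato perturbation theory for the family $R(1,s)$. For part (i), positivity of $\tilde G(s)$ on $s>0$ is immediate from $X_1>0$. For the two-sided comparison, the pointwise inequality $\varphi=\tau_{\rho}\ge \tau$ (since $\rho\ge 1$) gives $\mu_Y(\varphi>t)\ge \mu_Y(\tau>t)$, while (H0) combined with (H3)---the log-periodic function $M$ being bounded below away from zero forces $\mu_Y(\tau>t)$ to be itself of order $t^{-\beta}\ell(t)$---yields the matching upper bound $\mu_Y(\varphi>t)=O(\mu_Y(\tau>t))$. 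Hypothesis (H5) propagates this to $\mu_Y(g_Y>t)\asymp \mu_Y(\tau>t)$, and integration against $s e^{-st}$ converts these tail comparisons into the asserted relations between $\bar\varphi\tilde G(s)$ and $G(s)$.

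For part (ii), I would extract $\lambda(1,s)$ via the eigenequation $R(e^{-sg_Y}h_s)=\lambda(1,s)h_s$: integrating against $\mu_Y$ and using $\mu_Y R=\mu_Y$ yields $\lambda(1,s)=\int_Y e^{-sg_Y}h_s\,d\mu_Y\,/\,\int_Y h_s\,d\mu_Y$. Since $g_X$ is bounded we have $|g_Y|\le \|g_X\|_\infty\varphi$, and since $\beta>1$ the return time satisfies $\varphi\in L^1(\mu_Y)$, so the family $R(1,s)$ is a small perturbation of $R$ in the operator topology on $\B$; Kato's theory then gives $\|h_s-1\|_{\B}\to 0$ at a rate controlled by $\|R(1,s)-R\|_{\mathrm{op}}$. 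Passing to leading order delivers $1-\lambda(1,s)=(1-\int_Y e^{-sg_Y}d\mu_Y)(1+o(1))=G(s)(1+o(1))$, the final equality coming from the tail matching for $Z_1$ together with dominated convergence applied to both Laplace integrals. For the moreover part, (H4) combined with (H5) yields $\mu_Y(g_Y>t) = C'\mu_Y(\tau>t)(1+o(1))$, with $C'$ absorbing the constants from both hypotheses; inserting into the Laplace representations of $G$ and $\bar\varphi\tilde G$ and passing the $(1+o(1))$ through the integral (justified by uniform integrability for $\beta>1$) produces $G(s) = C\bar\varphi\,\tilde G(s)(1+o(1))$.

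The main obstacle I anticipate is justifying the Kato-type bound $\|h_s-1\|_\B=o(1)$ at the correct rate despite $g_Y$ being unbounded: one must verify that multiplication by $e^{-sg_Y}-1$ maps $\B$ into itself with operator norm vanishing as $s\to 0$, which uses the Gibbs--Markov structure (constancy of $\varphi$ on partition elements $a\in\alpha$) and the tail decay (H0). A secondary subtlety in part (i) is that the first inequality $\tilde G\le \bar\varphi^{-1}G$ with constant exactly $1$ appears to require the constant in (H5) to be $\ge 1$; if not, the inequality should be read as holding up to a harmless absolute constant that can be absorbed into the overall proportionality.
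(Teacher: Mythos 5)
Your proposal is correct and follows essentially the same route as the paper: the Aaronson--Denker expansion of $1-\lambda(1,s)$ obtained by integrating the eigenequation against $\mu_Y$ (with the error term quadratic in $\|R(1,s)-R(1,0)\|$, which the paper bounds by $s^\beta\ell(1/s)$ as you anticipate), combined with the tail comparison $\mu_Y(\tau>n)\le\mu_Y(\varphi>n)\le C\mu_Y(\tau>n)$ from $\rho\ge 1$, (H0), and (H3). The two subtleties you flag---the rate of the eigenvector perturbation, and that the left-hand inequality $\tilde G\le\bar\varphi^{-1}G$ really holds only up to a multiplicative constant unless the constant in (H5) is normalized to $1$---are exactly the points the paper's proof handles (the first explicitly via $\|R(1,s)-R(1,0)\|\ll s^\beta\ell(1/s)$, the second implicitly in the phrasing ``for some $C\ge1$''), so your assessment of the obstacles is accurate rather than revealing a gap.
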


\begin{proof}Let $v(1,s)$ be the family of eigenvectors associated with $\lambda(1,s)$ and let $v(1,0)$ be the normalised eigenvector so that
$\int_Y v(1,0)\, d\mu_Y=1$.
As in~\cite{AaronsonDenker01, Gouezel04b}, write
\[
1-\lambda(1,s)=\int_Y(1-e^{-sg_Y}) v(1,0)\, d\mu_Y+\int_Y(R(1,s)-R(1,0))( v(1,s)-v(1,0))\, d\mu_Y.
\]
Since $g_X$ is bounded, so is $g=g_X\circ \pi$  and thus, $g_Y\ll n$ on $\{\varphi= n\}$.
Since we also know that $\mu_Y(g_Y>t)=C\mu_Y(\varphi>t)(1+o(1))\ll t^{-\beta}$, a direct computation (see, for instance,~\cite[Lemma 4,1]{Gouezel05} with $z=1$ there) shows that 
\begin{align*}
\|R(1,s)-R(1,0)\|\ll \sum_{n}\|R(1_{\{\varphi= n\}} (e^{-sg_Y}-1))\|\ll s^\beta\ell(1/s).
\end{align*}
Hence,
$V(s):=|\int_Y(R(1,s)-R(1,0))( v(1,s)-v(1,0))\, d\mu_Y|\ll s^{2\beta}\ell(1/s)^2$.

Given  that $(Z_j)_{j\ge 1}$  are i.i.d.\  random variables as in the statement of the lemma, 
\[\int_Y(1-e^{-sg_Y}) v(1,0)\, d\mu_Y=1-\E_{\P}(e^{-s Z_1})=G(s).\]
 It remains to estimate $G(s)$.
By (H0) and (H3),  $1\le \frac{\mu_Y(\varphi>n)}{\mu_Y(\tau>n)}\le C$,  for some $C>1$.
Recall that $\P(X_1>n)=(\bar\varphi)^{-1}\mu_Y(\tau>t)(1+o(1))$ and $\tilde G(s)=1-\E_{\P}(e^{-s X_1})$. 
Clearly, $\tilde G(s)\le \bar\varphi^{-1} G(s)\le C \tilde G(s)$
and  $G(s)=C\bar\varphi\, \tilde G(s)(1+o(1))$ if (H4) holds.

Since $X_1$ satisfies (H3),~\cite[Theorem 1]{Kruglov} (see also~\cite[Lemma 1]{Csorgo07})
ensures that there exist $C_2> C_1>0$ so that $C_1 s^{\beta}\ell(1/s)\le \tilde G(s)+s\E_{\P}( X_1)\le C_2s^{\beta}\ell(1/s)$; the analysis in~\cite{Kruglov, Csorgo07} is in terms of
Fourier transforms and carries over with simplified arguments to real Laplace transforms, as required for  $\tilde G(s)$.  
In particular,  given $C_0:=\inf_{x>0} M(x)>0$, we have
\begin{align*}
\tilde G(s)&+s\E_{\P}( X_1)=\int_0^\infty (1- e^{-sx}+sx)\, d\P(X_1>x)>\int_{1/s}^\infty (1- e^{-sx}) \P(X_1>x)\, dx\\
&> C s^\beta \int_1^\infty e^{-\sigma} \sigma^{-\beta}M(\sigma/s)\ell(\sigma/s)\, d\sigma > C C_0 s^\beta\ell(1/s) \int_1^\infty e^{-\sigma}\sigma^{-(\beta-\eps)}\, d\sigma,
\end{align*}
where in the inequality we have used Potter's bounds~\cite{BGT}. Since the integral is well defined, the claim for the $\inf$  follows
and the reverse inequality follows similarly with $\sup$ instead of $\inf$.
It follows that $G(s)> 0$, for all $s>0$ and $1-\lambda(1,s)= G(s) +V(s)=G(s)(1+O(s^{\beta}\ell(1/s))$.~\end{proof}

\begin{pfof}{~Theorem~\ref{prop-main}} Given $k_n$ as in (H3), $g_X$ as in the statement of Theorem~\ref{prop-main} and $g=g_X\circ \pi$, set
 $S_{k_n}(g_X)=\sum_{j=0}^{k_n-1}g_X\circ f^{j}$ and  $S_{k_n}(g)=\sum_{j=0}^{k_n-1}g\circ f_\Delta^{j}$. 

Let $A_{k_n}$ as in (H3) and note that $\E_{\mu_X}(e^{-s A_{k_n}^{-1}S_{k_n}(g_X)}) =\E_{\mu_\Delta}(e^{-s A_{k_n}^{-1}S_{k_n}(g)})$. 
By Lemma~\ref{lemma-1s}, the assumption on $1-\lambda(1,s)$ in statements of Proposition~\ref{prop-lthto} is satisfied.
It follows from Proposition~\ref{prop-lthto} that as $n\to\infty$ and $s\to 0$, 
\begin{equation}
\label{eq-l0}
\E_{\mu_\Delta}(e^{-s A_{k_n}^{-1}S_{k_n}(g)})-\frac{1}{\bar\varphi}\Big(1-\frac{G(s A_{k_n}^{-1})}{\bar\varphi}\Big)^{k_n}\to 0,
\end{equation}
for $G(s)=1-\E_{\P}(e^{-s Z_1})$, where $Z_1$ is distributed like $\varphi$.
By Lemma~\ref{lemma-1s}, 
\begin{equation}
\label{eq-l1}
(1-\tilde G( A_{k_n}^{-1} s)\Big)^{k_n}\le \Big(1-\frac{G(s A_{k_n}^{-1})}{\bar\varphi}\Big)^{k_n}\le C\Big(1-\tilde G( A_{k_n}^{-1}s)\Big)^{k_n},
\end{equation}
for $\tilde G(s)=1-\E_{\P}(e^{-s X_1})$, where $X_1$ is distributed like $\tau$ and for some $C>1$.

 Given $(X_j)_{j\ge 1}$ i.i.d.\  random variables as in the statement of  Lemma~\ref{lemma-1s}, ~\cite[Corollary 3]{M} ensures 
that for a suitable centering sequence $B_{k_n}$,
\[A_{k_n}^{-1}(\sum_{j=0}^{k_n-1}X_i-B_{k_n})\rightarrow^d V_\beta(c),\] where $V_\beta(c)$ is a semistable random variable as recalled in Section~\ref{sec-sst} (in particular, see~\eqref{eq-dgp-distf}).
Multiplying equation \eqref{eq-l1} by $e^{-sB_{k_n}A_{k_n}^{-1}}$, we obtain
\begin{align}
\label{eq-l11111}
\nonumber &e^{-s\frac{B_{k_n}}{A_{k_n}}}\E_{\P}\Big(\exp\Big(-s\frac{\sum_{i=1}^{k_n}X_i-B_{k_n}}{A_{k_n}}\Big)\Big)  \le e^{-s\frac{B_{k_n}}{A_{k_n}}}\Big(1-\frac{G(s A_{k_n}^{-1})}{\bar\varphi}\Big)^{k_n}\\
& \le C e^{-s\frac{B_{k_n}}{A_{k_n}}}\E_{\P}\Big(\exp\Big(-s\frac{\sum_{i=1}^{k_n}X_i-B_{k_n}}{A_{k_n}}\Big)  \Big)
\end{align}

Starting from~\eqref{eq-l11111} and extracting a subsequence $n_r$, we have that,
\begin{equation*}
\frac{\E_{\P}(\exp(-s A_{n_r}^{-1}(\sum_{i=1}^{n_r}Z_i-B_{n_r})))}{\E_{\P}(\exp(-sA_{n_r}^{-1} (\sum_{i=1}^{n_r}X_i-B_{n_r})))}\to b\in (1, C)\text{ as } r\to\infty,
\end{equation*}
for some $b\in (1, C)$.
This together with~\eqref{eq-l0} implies that 
\begin{equation*}
\frac{\E_{\P}(\exp(-s A_{n_r}^{-1}( S_{n_r}(g)-B^*_{n_r})))}{\E_{\P}(\exp(-s\log b\,A_{n_r}^{-1} (\sum_{i=1}^{n_r}X_i-B_{n_r})))}\to 1\text{ as } r\to\infty,
\end{equation*}
where $B^*_{n_r} = B^*_{n_r} -  n_r \log b$.
The previous displayed equation together with the continuity theorem for Laplace transforms (as in~\cite[Chapter XIII, Theorem 2 and Theorem 2(a)]{Feller} yields
\[
\lim_{r\to\infty}\sup_{y>0}\Big| \P(A_{n_r}^{-1} (S_{n_r}(g)-B^*_{n_r})>y)-\P(A_{n_r}^{-1}( \sum_{i=1}^{n_r}X_i-B_{n_r})>y)\Big|\to 0.
\]
Since $X_1$ is distributed like $\tau$ and $\mu(\tau>n)$ satisfies (H3), the tail probability $\P(X_1>n)$ satisfies (H3) along $k_n$. 
The merging result~\cite[Theorem 2]{CM1} ensures that
\[
\lim_{r\to\infty}\sup_{y>0}\Big| \P(A_{n_r}^{-1} (\sum_{i=1}^{n_r}X_i-B_n)>y)-\P(V_{\gamma_{n_r}}^\beta(c)>y)\Big|\to 0,
\]
where the parameter $\gamma_n$ is defined as in~\eqref{eq:def-delta}. Putting together the previous two displayed equations, we obtain
\[
\lim_{r\to\infty}\sup_{y>0}\Big| \P(A_{n_r}^{-1} (S_{n_r}(g)-B^*_{n_r})>y)-\P(V_{\gamma_{n_r}}^\beta(c)>y)\Big|\to 0.
\]
As in ~\cite[Section 1]{CM1}, the distribution $F_n(y)=\P(V_{\gamma_{n}}^\beta(c)<y)$
is stochastically compact; that is, every sequence  contains a further subsequence $r_n$ 
so that $F_{r_n}(y)\to G_W(y)$ as $n\to\infty$, for some 
random variable $W$ with non trivial distribution $G_W$. 
Hence, $A_{n_r}^{-1} (S_{n_r}(g)-B_{n_r})\to^d W$, where $W$ has a non trivial distribution 
 (although we cannot say that $W$ is in the domain of a semistable law).

For the statement under (H4), we just need to recall that from Lemma~\ref{lemma-1s} that $G(s)=C\bar\varphi\, \tilde G(s)(1+o(1))$, for $C>0$ as in (H4). In this case, 
~\cite[Corollary 3]{M} applies directly to $(Z_j)_{j\ge 1}$ as in the statement of Lemma~\ref{lemma-1s} ensuring that 
\[
A_{k_n}^{-1}( \sum_{i=1}^{k_n}Z_i-B_{k_n})
\]
converges in distribution with respect to the measure $\mu_Y$. This together with~\eqref{eq-l0} gives the conclusion for the convergence in distribution with respect to the measure $\mu_Y$
of $ A_{k_n}^{-1}(S_{k_n}(g)-B_{k_n}^1)$. The strong distributional convergence 
$ A_{k_n}^{-1}(S_{k_n}(g)-B_{k_n}^1)\to^d V_{\beta}(c)$ follows from this together with~\cite[Proposition 4.1]{ThalerZweimuller06}. ~\end{pfof}

\section{On the asymptotics of the tail of first return times $\tau$ 
for $f_{M_1}$ and $f_{M_2}$, and verification of (H3)}\label{sec.first-tail}

Recall the wobbly maps $f_{M_1}$, $f_{M_2}$  with $M_1, M_2$ introduced in in Subsection~\ref{sec-infst}.
In this section we prove the following tail estimates in the setup of $f_{M_1}$ for the first return time
$\tau(x)=\inf \{ n \geq 1 : f_{M_1}^n(x) \in Y\}$, $x\in Y$, and $Y=[1/2,1]$.
The verification in the setup of $f_{M_2}$ is similar.
We show that there are functions 
$\tilde{M},h,\delta$ and $\ell$ satisfying the hypothesis of Corollary 2 in \cite{M} with
\begin{equation}
    \label{eq:tail-of-tau-sin-final}
    m_Y( \tau > y ) = y^{-\beta} \ell(y) ( \tilde{M} ( \delta(y) ) + h(y)),
\text{ where } \beta=1/\alpha.
\end{equation}
In turn this allows us to verify condition (H3) for  $f_{M_1}$ along $k_n=\lfloor e^{n \alpha c_1}\rfloor$. 
A similar analysis holds for $f_{M_2}$ (with $c_2$ instead of $c_1$)
with a similar (but simpler) proof and we omit this.

To ease the notation, throughout this section we write $f$ instead of  $f_{M_1}$

Define below a monotone sequence $x_n\to 0$, 
such that $f^k(x_n)\in(0,1/2)$ for all $k\leq n$, and $f^{n}(x_n)=1/2$. 
This sequence $(x_n)$ will allow us to study the set $\{x \in Y:\tau(x)=n\}$ for $Y = (\frac12,1]$.
First consider the local behaviour of $f$ near the critical set $\mathcal{C}=\{s_{\ell}
=e^{-\ell c_1},\ell \geq 1\}$.
If $n\leq x< n+1$, then $\{x\}=x-n$. For $\epsilon$ small, consider the values
$M(u^{\pm}_{\epsilon}),$ with $u^{\pm}_{\epsilon}=e^{-(\ell\pm\epsilon) c_1}$.
We have:
\begin{equation*}
M(u^{+}_{\epsilon})=C_02^{-\{\ell+\epsilon\}}=C_02^{-\epsilon},\quad
M(u^{-}_{\epsilon})=C_02^{-\{\ell-\epsilon\}}=C_02^{-1+\epsilon}.
\end{equation*}
At $\mathcal{C}$, let $s^{\pm}_{\ell}$ denote the usual upper $(+)$ and lower $(-)$ approaching limits to $s_{\ell}=e^{-\ell c_1}$.
Then:
\begin{equation*}
f(s^{+}_{\ell})=s_{\ell}+C_0s_{\ell}^{1+\alpha},\quad
f(s^{-}_{\ell})=s_{\ell}+\frac{C_0}{2}s_{\ell}^{1+\alpha}.
\end{equation*}
Hence $f(s^{+}_{\ell})>f(s^{-}_{\ell})$, with jump size equal to 
$\frac{C_0}{2}s_{\ell}^{1+\alpha}$. 

Let $x_0=1/2$, and define the sequence $x_n\to 0$, with the property that $f^k(x_n)\in(0,1/2)$ 
for all $k\leq n$, and $f^{n}(x_n)=1/2$.
Due to the jumps in $f$ at $\mathcal{C}$, the inverse map $f^{-1}(x)$ is not always defined, and so we cannot
immediately set $x_n=f^{-n}(1/2)$. We begin (inductively) by setting $x_{n+1}=f^{-1}(x_n)$ in the case
for which $x_{n+1}$ is defined. This will certainly apply for small values of $n$. 
An issue arises in the case 
$x_n\in[f(s^{-}_{\ell}),f(s^{+}_{\ell})]$ for some $s_{\ell}\in\mathcal{C}$. 
In this case we set $x_{n+1}=s_{\ell}$,
and then continue iterating backwards. If $x_n\not\in[f(s^{-}_{\ell}),f(s^{+}_{\ell})]$ 
then again no issue arises and we set $x_{n+1}=f^{-1}(x_n)$. 
For this sequence we define $J_n=[x_{n+1},x_n]$. By construction
we have $\{\tau>n+1\}=f^{-1}([0,x_n]) \cap Y$. 

We have the following proposition concerning
the asymptotics of the sequence $(x_n)$. 
\begin{prop}
    \label{prop:tail-of-xn-sin}
    Consider the map $f$ defined by 
\eqref{eq.intermittent2}. The sequence $x_n$ satisfies
    \begin{equation}\label{eq:tail-of-x_n}
        x_n=\frac{1}{  (\alpha M_0(n)n)^{\beta} }
+O\left(\frac{\log n}{n^{1+\beta}}\right),
    \end{equation}
with $\beta=1/\alpha$, and
        \begin{equation}
            \label{eq:M_0-log}
            M_{0}(n) = \frac{1}{n}\sum_{j=1}^{n}M(x_j).
        \end{equation}
\end{prop}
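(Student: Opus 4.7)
The plan is to set $y_n := x_n^{-\alpha}$, derive a quasi-telescoping identity of the form $y_{n+1}-y_n = \alpha M(x_{n+1}) + O(x_{n+1}^\alpha)$, and then invert after establishing the a priori bound $x_n \approx n^{-1/\alpha}$.

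First I would fix $n$ large and consider the ``generic'' case $x_n \notin [f(s_\ell^-),f(s_\ell^+)]$ for any $\ell$, so that $x_{n+1} = f^{-1}(x_n)$ in the usual sense and $x_n = x_{n+1}(1 + M(x_{n+1})x_{n+1}^{\alpha})$. Raising both sides to the power $-\alpha$ and using the expansion $(1+u)^{-\alpha} = 1 - \alpha u + O(u^2)$ for small $u$ yields
\[
x_{n+1}^{-\alpha} - x_n^{-\alpha} = \alpha M(x_{n+1}) + O(x_{n+1}^{\alpha}),
\]
with the implied constant uniform in $n$ since $M_1$ is bounded above. At a ``jump'' index, where $x_n \in [f(s_\ell^-),f(s_\ell^+)]$ and we have set $x_{n+1} = s_\ell$, one still has $0\le x_n-x_{n+1}\le C_0 x_{n+1}^{1+\alpha}$, so the same Taylor argument gives a bounded main term $c_n\in[0,\alpha C_0]$ in place of $\alpha M(x_{n+1})$, plus the same error $O(x_{n+1}^\alpha)$. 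The crucial observation is that the number of jump indices up to time $n$ is bounded by $\#\{\ell\ge 1:s_\ell\ge x_n\} = O(\log(1/x_n)) = O(\log n)$, since $s_\ell = e^{-\ell c_1}$ decays geometrically while $x_n$ decays only polynomially.

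Before summing, the a priori bound $x_n \approx n^{-1/\alpha}$ is needed. This follows by elementary induction from the telescoping inequality together with the fact that $M_1$ is bounded both above and below by positive constants: one obtains $\kappa n^{-1/\alpha} \le x_n \le K n^{-1/\alpha}$ for suitable $\kappa, K>0$. Plugging this bound into the error term and summing the identity from $j=1$ to $n$ yields
\[
x_n^{-\alpha} = \alpha \sum_{j=1}^{n} M(x_j) + O(\log n) = \alpha n M_0(n) + O(\log n),
\]
since the Taylor errors contribute $\sum_{j=1}^n O(x_j^\alpha) = \sum_{j=1}^n O(1/j) = O(\log n)$, and each of the $O(\log n)$ jump indices contributes a further $O(1)$.

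Finally, since $M_0(n)$ is bounded away from $0$ and $\infty$, inverting the identity gives $x_n^\alpha = (\alpha n M_0(n))^{-1}(1+O(\log n/n))$, whence
\[
x_n = (\alpha n M_0(n))^{-\beta}\bigl(1+O(\log n/n)\bigr) = (\alpha n M_0(n))^{-\beta} + O\bigl(\log n/n^{1+\beta}\bigr),
\]
which is exactly \eqref{eq:tail-of-x_n}. The main obstacle is the careful bookkeeping at the discontinuity set $\mathcal{C}$: one must verify that the $O(\log n)$ jump indices only contribute $O(1)$ each to the telescoping sum, and that the partially defined ``preimage'' $x_{n+1}=s_\ell$ still fits into the Taylor comparison. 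Once this is handled, the remaining steps are routine.
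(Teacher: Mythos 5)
Your proof is correct and follows essentially the same route as the paper: substitute $u_n = x_n^{-\alpha}$, Taylor-expand the recurrence, establish the a priori polynomial bound $x_n \approx n^{-1/\alpha}$ to control errors, count the $O(\log n)$ jump indices arising from the geometric decay of $s_\ell = e^{-\ell c_1}$ against the polynomial decay of $x_n$, sum, and invert. The paper packages the jump correction into an auxiliary quantity $\xi_n$ and a modified $\widehat{M}$, whereas you keep it as a separate bounded term $c_n$ at each jump index; this is a purely cosmetic difference and your bookkeeping at the jumps is, if anything, slightly cleaner.
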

\begin{rmk}
Notice that $M_0(n)$ is the average of $M(x)$ along the sequence $(x_j)$
up to iterate $n$. It is clear that $M_0(n)\in[C_0/2, C_0]$.
\end{rmk}

\begin{proof}
In the case where $x_n\not\in[f(s^{-}_{\ell}),f(s^{+}_{\ell}))$
then $x_{n+1}$ is derived from $x_n=x_{n+1}+M(x_{n+1})x^{1+\alpha}_{n+1}$.
If $x_n\in(f(s^{-}_{\ell}),f(s^{+}_{\ell}))$, then we set $s_{\ell}=x_{n+1}$
and this gives rise to an (absolute) error $\xi_n$ given by:
$$\xi_n=\left|\frac{x_n-x_{n+1}}{x^{1+\alpha}_{n+1}}-M(x_{n+1})\right|,$$
and this is bounded by 
$$|M(s^{+}_{\ell})-M(s^{-}_{\ell})|s_{\ell}^{1+\alpha}\leq \frac{C_0}{2}
s_{\ell}^{1+\alpha}.$$
Define
$$\widehat{M}_0(n)=M_0(n)+\frac{1}{n}\sum_{j=1}^{n}\xi_j,$$
with $\xi_j=0$ in the case  $x_j\not\in(f(s^{-}_{\ell}),f(s^{+}_{\ell}))$.
Then we have
$$x_n=x_{n+1}+[M(x_{n+1})+\xi_{n+1}]x^{1+\alpha}_{n+1}.$$
To find the asymptotics, let $u_n=x^{-\alpha}_n$, and set
$\widehat{M}(x_n)=M(x_{n})+\xi_{n}.$
Then 
\begin{equation}
\begin{split}
u_n &=u_{n+1}\left(1+\frac{\widehat{M}(x_n)}{u_{n+1}}\right)^{-\alpha}\\
&=u_{n+1}-\alpha\widehat{M}(x_n)+\alpha(\alpha+1)\frac{\widehat{M}(x_n)^2}{2u_{n+1}}(1+o(1)).\\
\end{split}
\end{equation}
This leads to the asymptotic expansion:
\begin{equation}\label{eq.un-asymp}
\begin{split}
u_n &=u_0+\sum_{j=0}^n \widehat{M}(x_j)\ 
+\alpha\sum_{j=0}^n\frac{\alpha(\alpha+1)\widehat{M}(x_j)^2}{2u_0+2\sum_{l=0}^j \widehat{M}(x_l) }(1+o(1))\\
&=u_0+\alpha n \widehat{M}_0(n)+ \sum_{j=0}^n \frac{ \widehat{M}^2(u_j)}{u_0+
j\widehat{M}_0(j) }(1+o(1)).
\end{split}
\end{equation}
Since $\widehat{M}(x_n)$ is bounded, it follows
that $u_n\in[\frac{n}{C_1},C_1n],$ for some $C_1>0$ depending
only on $\alpha$ and $c_1$.
Hence $x_n\in[\left(\frac{1}{C_1n}\right)^{\beta},
\left(\frac{C_1}{n}\right)^{\beta}]$.
These are rough bounds which we can now improve on. We claim
that
$$\widehat{M}_0(n)=M_0(n)+O\left(\frac{\log n}{n}\right).$$
To show this we consider integers $j\leq n$ for which $\xi_j\neq 0$. 
For such $j$, there exists $\ell\in\mathbb{N}$ with
$x_{j}<e^{-\ell c_1}<x_{j-1}$.
We claim that there can be no more than $2c^{-1}_1\log n$ 
such integers $j\leq n$.
Indeed if there are $k$ such values of $j$, namely $j_1,\ldots, j_k$,
then we must have $x_{j_k}<e^{-k c_1}$. This follows from the fact
that the sequence $(x_{n})$ is strictly monotone decreasing, and if $j_i$ is such that
$x_{j_i}<e^{-\ell_i c_1}<x_{j_i-1}$, 
then we must have $x_{j_{i+1}}<e^{-(\ell_{i}+1)c_1}$, i.e. $\ell_{i+1}>\ell_{i}+1$.  
However $x_n\in[(\frac{1}{C_1n})^{\frac{1}{\alpha}},(\frac{C_1}{n})^{\frac{1}{\alpha}}]$, 
and hence $k\leq \beta c^{-1}_1\log n+C_2$,
where $C_2>0$ is a uniform constant. The claim follows.

Returning to the asymptotic expansion for $u_n$ in equation \eqref{eq.un-asymp},
we now have the refinement:
\begin{equation}
u_n=u_0+\alpha nM_0(n)+O(\log n), 
\end{equation}
where the third term on the right hand side of \eqref{eq.un-asymp} is also
$O(\log n)$ (via a harmonic series bound). Inverting for $x_n$ gives:
\begin{equation}
x_n=\left(u_0+\alpha nM_0(n)+O(\log n)\right)^{-\frac{1}{\alpha}}
=\frac{1}{(\alpha nM_0(n))^{\frac{1}{\alpha}}}+
O\left(\frac{\log n}{n^{\frac{1}{\alpha}+1}}\right).
\end{equation}
This completes the proof.

\end{proof}

Here and throughout let us extend $M_{0}$ to the positive reals so that
$M_{0}(x):= M_{0}(\floor{x})$. We shall write 
$x_n=(\alpha nM_0(n))^{-1/\alpha}+n^{-1/\alpha}E_0(n)$, with
$E_0(n)=o(1)$. We extend $E(x)$ to $x\in [0,\infty)$ via $E_{0}(x):= E_{0}(\floor{x})$.
We now examine further properties of the function $M_0$. To account for
different versions of $M(x)$, we keep the argument general. To this end,
let $p: \R \to \R$ be a piecewise Lipschitz periodic function of period $c_1$.
Define $M(x) = a + bp(\log(x))$ where $a > 0$ and $b$ is so small that $M(x)$ 
is bounded and bounded
away from zero. Note that $M$ is log-periodic with period $e^{c_1}$.
Recall that $M_0(n) = \frac1n \sum_{j=1}^n M(x_j)$ is bounded and bounded away from zero.
We have the following result:

\begin{prop}\label{prop-4.3}Set $c=e^{\alpha c_1}$ and  $w_k=\lfloor c^k\rfloor$.
Then there exists $\zeta \in \R$ such that
 \begin{equation}\label{eq-w}
  \lim_{k\to\infty} \frac{w_{k+1}}{w_k} =c  \qquad \text{ and } \qquad  \lim_{k\to\infty} M_0(w_k) = \zeta.
 \end{equation}
\end{prop}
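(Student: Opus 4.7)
The first limit $w_{k+1}/w_k \to c$ is immediate, since $w_k = c^k + O(1)$ with $c>1$.

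For the convergence of $M_0(w_k)$, my plan is to exploit the block structure over $(w_k,w_{k+1}]$, across which the phase $\log x_j$ sweeps exactly one period of the log-periodic function $M$, and then close a self-consistency argument. From Proposition~\ref{prop:tail-of-xn-sin}, $\phi(j) := \log x_j = -\beta\log(\alpha j M_0(j)) + O(\log j/j)$, and the recursion $x_j - x_{j+1} = M(x_{j+1}) x_{j+1}^{1+\alpha}$ gives the consecutive-difference estimate $\phi(j-1)-\phi(j)\sim 1/(\alpha j)$. Since $\log(w_{k+1}/w_k)\to\log c=\alpha c_1$, across each block the phase $\phi(j)$ drops by one period $c_1=\beta\log c$ (up to $o(1)$) of $M$. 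Inverting locally as $j(u) = w_k\, e^{\alpha(\phi(w_k)-u)}$ and performing the standard change-of-variables Riemann sum gives
\[
L_k := \frac{1}{w_{k+1}-w_k}\sum_{j=w_k+1}^{w_{k+1}} M(x_j) = a + b\,H\bigl(\phi(w_k)\bmod c_1\bigr) + o(1),
\]
where $H(\phi) := \tfrac{\alpha}{c-1}\int_0^{c_1} p(\phi-v)\,e^{\alpha v}\,dv$ is continuous and $c_1$-periodic.

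The exact telescoping identity $M_0(w_{k+1}) = \tfrac{w_k}{w_{k+1}}M_0(w_k) + \tfrac{w_{k+1}-w_k}{w_{k+1}}L_k$, together with the relation $\phi(w_k)\equiv -\beta\log(\alpha M_0(w_k))\pmod{c_1}$ (which follows from $\beta\log w_k = kc_1+O(c^{-k})$), then transforms into the perturbed iteration
\[
M_0(w_{k+1}) = \tfrac{1}{c}M_0(w_k) + \bigl(1-\tfrac{1}{c}\bigr)\,T\bigl(M_0(w_k)\bigr) + o(1),\qquad T(\zeta) := a + b\,H\bigl(-\beta\log(\alpha\zeta)\bmod c_1\bigr).
\]
The smallness of $|b|$ (built into the hypothesis that $M$ is bounded away from zero) makes $|T'(\zeta)|\le 2|b|\,\|p'\|_\infty\,\beta/C_0 < 1$ on $[C_0/2,C_0]$, so the update map $\zeta\mapsto \tfrac{1}{c}\zeta + (1-\tfrac{1}{c})T(\zeta)$ is a strict contraction, and standard perturbed-fixed-point arguments yield convergence of $M_0(w_k)$ to the unique fixed point $\zeta$ of $T$.

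The main obstacle is producing the uniform $o(1)$ Riemann-sum error bound for $L_k$. Two error sources must be controlled: the $O(\log j/j)$ correction to $\phi(j)$ from Proposition~\ref{prop:tail-of-xn-sin}, and the $O(\log w_k)$ jump contributions coming from $f^{-j}(\mathcal{C})$ (already counted in the proof of Proposition~\ref{prop:tail-of-xn-sin}). After division by $w_{k+1}-w_k\sim (c-1)c^k$, both contribute $O(k/c^k)=o(1)$ to $L_k$; however, the implied constants must be tracked carefully to ensure uniformity in the phase $\phi(w_k)\bmod c_1$, so that the contraction step closes. Once this uniform Riemann-sum approximation is established, the fixed-point argument is routine.
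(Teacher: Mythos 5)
Your approach is genuinely different from the paper's. The paper passes to a continuous version $\overline{M}$ satisfying the integral equation \eqref{eq-integral}, differentiates to get an ODE $V'(x) = M((\alpha V(x))^{-\beta})$ that is solved by separation of variables, and exploits periodicity of the integrand in $g(U)$ to deduce asymptotic log-periodicity of $\overline{M}$ with an $O(1/x)$ error; the discrete $M_0$ is then close to $\overline{M}$ and the resulting errors are summable along $w_k = \lfloor c^k\rfloor$, giving a Cauchy sequence. You instead work directly with block averages $L_k$, close a self-consistency relation, and invoke a perturbed contraction-fixed-point argument. Conceptually your telescoping identity is exactly what the paper's proof secretly encodes, so the intuition is right.

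However, there are two real gaps. First, your contraction bound $|T'| \le 2|b|\|p'\|_\infty\beta/C_0 < 1$ imposes a \emph{quantitative} smallness condition on $b$, whereas the proposition's hypothesis only requires $M$ to be bounded and bounded away from zero (for $M_1$ the oscillation amplitude is comparable to the level, so it is far from clear this contraction constant is $<1$). The paper's ODE argument needs no such assumption: it produces a single well-defined trajectory and shows it is asymptotically log-periodic directly, without linearizing around a fixed point. Second, and more fundamentally, the Riemann-sum step $L_k \approx a + bH(\phi(w_k)\bmod c_1)$ uses the local inversion $j(u) = w_k e^{\alpha(\phi(w_k)-u)}$, which presumes $M_0(j)\approx M_0(w_k)$ \emph{uniformly} across the block. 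But $M_0$ can drift by $O(1)$ over a block of length $\sim(c-1)w_k$ (the per-step change $O(1/j)$ sums to $O(\log c)$), so $\phi(j)$ is \emph{not} a priori a function of $j/w_k$ and $\phi(w_k)$ alone: using the exact Jacobian $|dj/d\phi| = e^{-\alpha\phi}/M(e^\phi)$ in place of your approximate one makes the $M$-dependence cancel and returns the tautology $L_k = \frac{w_{k+1}M_0(w_{k+1}) - w_kM_0(w_k)}{w_{k+1}-w_k}$. Proving $M_0$ varies slowly within a block without having first established convergence is circular; this is precisely why the paper switches to the exact integral equation for $\overline{M}$, and your candidly acknowledged ``uniformity of the Riemann-sum error'' issue is more than a bookkeeping matter.
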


\begin{proof}
The proof is based on the premise that $M_0$ is asymptotically log-periodic,
 where the ``asymptotic'' is faster than any power of the logarithm:
 \begin{equation}\label{eq-asymp-log-periodic}
  \left| \frac{M_0(c n)}{M_0(n)} - 1 \right| \leq \eps_n, \qquad \eps_n = o((\log n)^{-r}),
 \end{equation}
for any fixed $r > 0$. This implies that ~\eqref{eq-w} holds  for some sequence $m_k$ so that $m_k \geq C c^k$ for some $C > 0$ so that $\frac{m_{k+1}}{m_k} \to c$.
In fact we'll show that  ~\eqref{eq-w} holds  for $w_k=\lfloor c^k\rfloor$.
Note that
$$
|M_0(w_{k+1}) - M_0(w_k)| \leq M_0(w_k)  \left| \frac{M_0(c n)}{M_0(n)} - 1 \right| = O(\eps_{w_k})
= O(\eps_{\lfloor c^k \rfloor}).
$$
Because $\eps_n \to 0$ faster than any power of $1/\log n$, the terms $O(\eps_{\lfloor c^k \rfloor})$
are summable in $k$, and therefore $(M_0(w_k))_{k \in \N}$ is a Cauchy sequence, and hence convergent.

The proof therefore relies on proving \eqref{eq-asymp-log-periodic}.
We do this by replacing $M_0(n)$ by the solution of an integral equation.
From equation \eqref{eq:tail-of-x_n}, we have 
$$
x_n=\frac{1}{(\alpha M_0(n)n)^{\beta}}+O\left(\frac{\log n}{n^{1+\beta}}\right)
= \frac{1+\eps_n}{(\alpha M_0(n)n)^{\beta}},
$$
where $e_n = O(n^{-1}\log n)$. 
Hence, using the explicit form for $M(x)$, we have
$$
M_0(n) = \frac1n \sum_{j=1}^n M(x_j)
=\frac{1}{n}\sum_{j=1}^{n} a+b p\Big(\log ((\alpha j M_0(j))^{-\beta}) 
+ \log(1+e_j))\Big).
$$
Since $p$ is Lipschitz, $p(\log( (\alpha j M_0(j))^{-\beta}   )+\log(1+e_j)))-p(\log ( (\alpha j M_0(j))^{-\beta} )=O(e_j)$, 
which gives:
\begin{equation}\label{eq-M0error}
M_0(n) -  M(  (\alpha j M_0(j))^{-\beta}   )  
=\frac{1}{n}\sum_{j=1}^{n} O(e_j)=O\left(\frac{1}{n}\sum_{j=1}^{n} \frac{\log j}{j}\right) = O\left(\frac{(\log n)^2}{n}\right).
\end{equation}
This estimate suggests to replace $M_0(n)$ by a continuous version $\overline{M}(x)$, 
defined implicitly by
\begin{equation}\label{eq-integral}
\overline{M}(x)=\frac{1}{x}\int_1^xM((\alpha u\overline{M}(u))^{-\beta})\, du.
\end{equation}
We show in the Lemma~\ref{lem-1} that $\overline{M}$ is asymptotically log-periodic.
To make the step back to $M_0(n)$, note that
$H(x) := M((\alpha x M_0(x))^{-\beta})$ is not entirely log-periodic with period $c$,
but still has $O(\log x)$ intervals of monotonicity on $[0,x]$.
On each such interval $L$,
$|\int_L H(u) \, du - \sum_{j \in L \cap \Z} H(j) | \leq \sup_L H - \inf_L H$.
Therefore
$$
\int_1^x H(u)\,du =\sum_{j=0}^x H(j) + O(\log x).
$$
Since in our case, $H(x)$ is bounded away from zero, the sum dominates the $O(\log x)$ term.

It follows that if the integral is asymptotically log-periodic, so is the sum:
\begin{eqnarray*}
 \frac{\frac{1}{c x} \sum_{j=1}^{c x} H(j) }{\frac{1}{x} \sum_{j=1}^{x} H(j) }
 &=&
  \frac{\frac{1}{c x} \left( \int_1^{c x} H(j) + O(\log c x) \right) }{\frac{1}{x} 
  \left(\int_1^{x} H(j) + O(\log x)\right) } \\
 &=& \frac{\overline{H}(c) (1 +  \frac{O(\log c x)}{c x \overline{H}(e^cx)} ) }
 {\overline{H}(x)(1 + \frac{O(\log x)}{x \overline{H}(x)} )}\\
 &=& \frac{\overline{H}(c)}{\overline{H}(x)} \left( 1 + \frac{O(\log c x)}{c x \overline{H}(c x)} 
 + \frac{O(\log x)}{x \overline{H}(x)}\right).
 \end{eqnarray*}
This error term is small enough for the purpose of \eqref{eq-asymp-log-periodic}.
\end{proof}

\begin{lemma}\label{lem-1}
 If $M(x) = a(1+b p(\log x))$, for $a > 0$ and $b> 0$ sufficiently small,
 then $\overline{M}$ satisfies \eqref{eq-asymp-log-periodic} with $\eps_n = O(1/n)$.
\end{lemma}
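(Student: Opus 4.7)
The plan is to reduce the integral equation \eqref{eq-integral} to an autonomous ODE and solve it by quadrature. Differentiating \eqref{eq-integral} gives
\[
\Psi'(x) = M\bigl((\alpha\Psi(x))^{-\beta}\bigr), \qquad \Psi(x) := x\,\overline{M}(x).
\]
Writing the right-hand side as $q(\log\Psi)$ with $q(s) := a\bigl(1 + bp(-\beta\log\alpha - \beta s)\bigr)$, and using that $p$ has period $c_1 = \beta\log c$, one immediately checks that $q$ is periodic with period $\log c$. The ODE is autonomous in $\Psi$, hence separable: in the variable $\sigma = \log\Psi$ it reads $dx = e^{\sigma}/q(\sigma)\,d\sigma$.

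I would then integrate the separated equation explicitly by chopping $[0,\sigma]$ into complete periods of length $\log c$ plus a remainder; using the periodicity of $q$, the geometric series in $c^k = e^{k\log c}$ collapses and one obtains a closed-form expression
\[
x + C = e^{\sigma(x)}\,\phi(\sigma(x)),
\]
where the factor $\phi(r) = e^{-r}\bigl[J/(c-1) + \int_0^r e^u/q(u)\,du\bigr]$ (with $J := \int_0^{\log c} e^u/q(u)\,du$) is log-periodic with period $\log c$. Dividing by $x$ yields $\overline{M}(x) = 1/\phi(\sigma(x)) + O(1/x)$, so the log-periodic behavior of $\overline{M}$ is reduced to that of $\phi$.

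Taking logarithms gives $\sigma(x) = \log x - \log\phi(\sigma(x)) + O(1/x)$. Writing $\Delta := \sigma(cx) - \sigma(x) - \log c$ and exploiting the periodicity $\phi(\sigma + \log c) = \phi(\sigma)$, one obtains the self-consistent relation
\[
\Delta = \log\phi(\sigma(x)) - \log\phi(\sigma(x) + \Delta) + O(1/x).
\]
At $b=0$ one computes $\phi \equiv 1/a$, so for $b$ small $\phi$ is an $O(b)$ perturbation of a constant and the Lipschitz constant of $\log\phi$ is $O(b) < 1$. Thus the relation closes to give $\Delta = O(1/x)$. Lipschitz continuity of $\phi$ then yields $\phi(\sigma(cx)) - \phi(\sigma(x)) = O(1/x)$, and since $\overline{M}$ is bounded below, $|\overline{M}(cx)/\overline{M}(x) - 1| = O(1/x)$, which is \eqref{eq-asymp-log-periodic} with $\eps_n = O(1/n)$.

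The main obstacle is closing the fixed-point estimate for $\Delta$: this is precisely where the assumption that $b$ is sufficiently small is used. One needs $q$ to be bounded below by a positive constant so that the separation of variables and the definition of $\phi$ are legitimate, and one needs the Lipschitz constant of $\log\phi$ to be strictly less than $1$ so that the relation $\Delta = O(b)\Delta + O(1/x)$ can be inverted. Both requirements follow from the piecewise Lipschitz continuity and boundedness of $p$ together with the smallness of $b$.
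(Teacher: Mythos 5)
Your proposal is correct and follows the same core strategy as the paper: differentiate the integral equation, substitute $U=\log V$, separate variables, and exploit the $\log c$-periodicity of the integrand $e^z/q(z)$ to reveal the exponential/log-periodic structure of the quadrature $g$. Where you diverge is only in the final estimate. The paper does not introduce your explicit factorization $g(\sigma)=e^{\sigma}\phi(\sigma)$ with $\phi$ periodic; instead it works directly with the functional relation $g(U+\hat c)=e^{\hat c}g(U)+g(\hat c)$, chooses $x'$ with $U(x')=U(x)+\hat c$ so that $x'=cx+O(1)$, and then bounds $U(cx)-U(x')$ by the Mean Value Theorem using $U'=M(\cdot)e^{-U}=O(1/\xi)$. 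That closes immediately, and the smallness of $b$ is used only to keep $q$ positive. Your route instead derives the self-consistent equation $\Delta=\log\phi(\sigma(x))-\log\phi(\sigma(x)+\Delta)+O(1/x)$ and inverts it by a contraction argument using $\operatorname{Lip}(\log\phi)=O(b)<1$, making a second use of the smallness of $b$. Both give $\eps_n=O(1/n)$. The paper's MVT step is slightly more economical (and independent of the Lipschitz constant of $\phi$); your explicit $\phi$ makes the log-periodic limiting profile of $\overline{M}$ completely transparent, which is informative even though the lemma only asks for the asymptotic periodicity estimate.
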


\begin{proof}
We seek an asymptotic solution of this integral
equation \eqref{eq-integral}: we will transform it to a differential equation. First 
let $V(x)=x\overline{M}(x)$. 
Then
$$
V(x)=\int_1^x M((\alpha V(u))^{-\beta})\,du.
$$
Differentiating with respect to $x$ gives:
$$
\frac{dV}{dx}= M((\alpha V(x))^{-\beta})=a(1+bp(-\beta\log V(x)-\beta\log\alpha)).
$$
Now set $U(x) := \log V(x) \sim \log x$. Then
$$
\frac{dU}{dx} e^U =a(1+bp(-\beta U(x)+\log\alpha)).
$$
Separating variables (as far as possible), and integrating gives:
$$
g(U):=\int_{0}^{U}\frac{e^z}{a(1+bp(-\beta z+\alpha_1))}\,dz=x+ C^*,
$$
with $\alpha_1=-\beta\log\alpha$ and $C^{*}$ an integration constant.


Now the left hand integral does not admit a closed form, but we can
study $g(U)$ as follows: given $U$, choose $\ell=\lfloor \beta U/c_1 \rfloor$ and set $\hat c=c_1/\beta=c_1\alpha $ (so given $c$ as in the statement, $c=e^{\hat c}$). Then
$$
g(U)=\sum_{j=0}^{\ell-1}\int_{\hat cj}^{\hat c(j+1)}\frac{e^z}{a(1+bp(-\beta z+\alpha_1))}
\,dz+\int_{\hat c\ell}^{U}\frac{e^z}{a(1+bp(-\beta z+\alpha_1))}\,dz.
$$
For each of the integrals within the sum, let $z'=z-\hat c\,j$ be a new substitution variable.
Then the periodicity of the denominator gives:
\begin{equation}\label{eq.gu}
g(U)=\sum_{j=0}^{\ell-1}e^{j\hat c}g(\hat c)+e^{\ell \hat c}g(U\,\textrm{mod}\,\hat c)
= \frac{e^{\ell \hat c}-1}{e^{\hat c} - 1} g(\hat c) + e^{\ell \hat c} 
g(U\,\textrm{mod}\,\hat c).
\end{equation}
Notice that $g(U\,\textrm{mod}\,\hat c)$
is continuous on $[0, \hat c)$ and bounded by $g(\hat c)$.
Thus the solution is $U(x)=g^{-1}(x+ C^*)$ for some integration constant $C^*$.
Now $\overline{M}(x)=e^{U(x)}x^{-1}$, and we want to show that
$$
\lim_{x\to\infty}\frac{\overline{M}(e^{\hat c}x)}{\overline{M}(x)}=1.
$$
Thus asymptotic periodicity of $\overline{M}(x)$ corresponds
to showing $U(e^{\hat c}x)-U(x)=\hat c+o(1)$ as $x\to\infty$.

Consider the levels $U(x)=\hat c\ell+z$, and $U(x')=\hat c(\ell+1)+z$, with
$\ell$ large and $z\in[0, \hat c]$.  By geometric series, see \eqref{eq.gu}, we have:
$$
x=g(\hat c) \frac{e^{\ell \hat c}-1}{e^{\hat c}-1}+e^{\ell \hat c}g(z),\quad
x'=g(\hat c) \frac{e^{(\ell+1) \hat c}-1}{e^{\hat c}-1}+e^{(\ell+1) \hat c}g(z).
$$
We compute 
$$
\frac{x'}{x}=e^{\hat c} + \frac{g(\hat c)}{x}= e^{\hat c} + O(\frac1x).
$$ 
Thus we have found a sequence $x(\ell)$ for which
$x(\ell+1)/x(\ell)\to e^{\hat c}$, and
$$
\frac{\overline{M}(x(\ell+1))}{\overline{M}(x(\ell))}
=e^{U(x(\ell+1))-U(x(\ell))}\cdot\frac{x(\ell)}{x(\ell+1)}
=e^{\hat c}\frac{x(\ell)}{x(\ell+1)} = 1 + O(\frac1{x(\ell)}) = 1 + O(\hat c^{-\ell}).
$$
Now given an arbitrary $x$ (but large), let $\ell \in \N$ be such that $x=e^{\ell \hat c+z}$, for  
$z\in[0, \hat c]$. Take $x'$ such that $U(x') = U(x) + \hat c$. 
From the above we know that $x'/x = e^{\hat c} + O(1/x)$,
so $x' = e^{\hat c)} x + O(1)$.
Therefore, using the Mean Value Theorem,
\begin{eqnarray*}
 \frac{\overline{M}(e^{\hat c}x)}{\overline{M}(x)} &=& e^{U(e^{\hat c}x) - U(x)} \frac{x}{e^{\hat c}x}
 = e^{U(e^{\hat c}x)-U(x')} \\
 &=& e^{ O(1) U'(\xi) } = e^{ O(e^{-U(\xi)}) } = e^{O(1/\xi)} = 1+O(1/\xi),
\end{eqnarray*}
for some $\xi$ between $e^{\hat c}x $ and $x'$.
Thus $\overline M$ satisfies \eqref{eq-asymp-log-periodic} as required.
\end{proof}

\subsection{Verification of equation \eqref{eq:tail-of-tau-sin-final} and (H3) 
for $\mu_{Y}(\tau >n)$}\label{sec.h3-check}
We now explain how Proposition~\ref{prop-4.3} and its proof
can be used to verify \eqref{eq:tail-of-tau-sin-final}, and in particular assumption (H3). By  Proposition~\ref{prop-4.3} 
equation \eqref{eq-w} holds for $k_n=\lfloor c^n\rfloor$ and we can define 
$\tilde{M}(x)=\lim_{n\to\infty}M_0(k_nx)$, for $x\in(0,\infty)$.

As in (H3), let $A_n=n^{1/\alpha}$, and for $x\in[A_{k_n},A_{k_{n+1}})$
define $\delta(x)=x/A_{k_n}$. To verify \eqref{eq:tail-of-tau-sin-final} and thus (H3), that is
\begin{equation*}
\mu_Y(\tau>x)=x^{-\beta}(\tilde{M}(\delta(x))+h(x)),
\end{equation*}
we need to explain why $\tilde{M}$ satisfies (H3)(ii) and define $h$ so that (H3)(iii) holds. 

From the proof of Proposition \ref{prop-4.3}, we recall that $M_0(x)=\overline{M}(x)+O((\log x)^2/x$ as $x\to\infty$,
where $\overline{M}$ is specified in equation \eqref{eq-integral} (and is a continuous function).
We claim $ M_0(k_n\delta(x))$ is a sequence of uniformly continuous functions, up
to error $O\left(\log(A_{k_n})^2/A_{k_n}\right)$. This latter error can be absorbed into a right continuous function $h(x)$,
with $h(x)=O((\log x)/x^2))$, as desired.


It remains to prove the claim. Consider $x,y\in[A_{k_n},A_{k_{n+1}})$, so that $\delta(x)=x/A_{k_n}$ and
$\delta(y)=y/A_{k_n}$. Recalling that $\overline{M}(x)=e^{U(x)}/x$,
\begin{equation}
\begin{split}
|M_0(\delta(x) k_n)-M_0(\delta(y) k_n)|
&=|\overline{M}(\delta (x) k_n)-\overline{M}(\delta(y) k_n)|+O\left(\log(k_n)^2/k_n\right)\\
&=\left|\frac{e^{U(x k_n)}}{x k_n}-\frac{e^{U(y k_n)}}{y k_n}\right|+O\left(\log(k_n)^2/k_n\right)\\
&\leq \frac{1}{x k_n}|e^{U(x k_n)}-e^{U(y k_n)}|+e^{U(y k_n)}\frac{|x-y|}{k_n xy}
+O\left(\frac{\log(k_n)^2}{k_n}\right),\\
&\leq C_1\frac{A_{k_n}}{x}|\delta(x)-\delta(y)|+C_2|\delta(x)-\delta(y)|
+O\left(\frac{\log(k_n)^2}{k_n}\right).
\end{split}
\end{equation} 
The last line follows from the fact that for $x,y\approx A_{k_n}$, $e^{U(x k_n)}\approx k_n A_{k_n}$,
and $U(x)$ depends continuously on $x$ (and hence $\delta(x)$), with $C_1$, $C_2$ 
independent
of $k_n$, as required.

\section{Proof of Theorem \ref{thm.singmap} 
and verification of (H0)-(H2) for $f_{M_1}$}\label{proof.thm.sing}
In this section we prove Theorem \ref{thm.singmap} and for the map $f_{M_1}$ 
given by equation \eqref{eq.intermittent2} we verify hypotheses (H0)-(H3) 
(as described in Section \ref{sec-frame}). Hypothesis (H3) is already verified from
Section \ref{sec.h3-check}.
Throughout this section 
we work with $f_{M_1}$, and hence drop the subscript $M_1$.

To describe the organisation of proof, we explicitly
construct a partition $\mathcal{Q}$ of $Y$, and a stopping time 
$\varphi:\mathcal{Q}\to\mathbb{N}$, such that for all $\omega\in\mathcal{Q}$,
$f^{\varphi}(\omega)=Y$ bijectively, and with bounded distortion.
Due to the presence of the singularity set $\mathcal{C}$, the 
stopping time $\varphi$ will not (in general) coincide with the 
first return time $\tau$. In the first part of the construction we define 
an auxiliary partition $\mathcal{P}$ and a 
stopping time $T$, such that for all $\omega\in\mathcal{P}$, we have $T(\omega)<\infty$, and $|f^{T(\omega)}(\omega)|\geq\delta$.
The constant $\delta$ will be chosen to reflect that an interval $\omega\subset Y$ reaches a certain large scale under $f^{T}$, and
with bounded distortion. We then estimate $\mathrm{Leb}(T>n)$. This is the key technical step. Once we have constructed 
$\mathcal{P}$, it is then a fairly standard argument to link the asymptotics of $\mathrm{Leb}(T>n)$ to that
of $\mu(\varphi>n)$.

\subsection{A combinatorial construction and tail estimates}\label{sec.comb}
We now define a partition $\mathcal{P}$ of $Y$, and a stopping time $T:\mathcal{P}\to\mathbb{N}$ such that for
each $\omega\in\mathcal{P}$ we have $|f^{T(\omega)}(\omega)|\geq\delta$. The constant $\delta$ will be fixed, but for the moment
we can assume that $\delta>\sqrt{\mathcal{D}}$ (and so $\delta$ is chosen so that $|f^{T}(\omega)|$ reaches a definite large scale).
Moreover we construct $\mathcal{P}$ in such a way that $f^k\mid\omega$ is a diffeomorphism, and satisfies bounded distortion estimates
for all $k\leq T(\omega)$. For the following combinatorial construction
we take a finite time partition $\mathcal{P}_n$
of $Y$ defined inductively. All $\omega\in\mathcal{P}_n$ will have $T(\omega)>n$.
We set $\mathcal{P}_0=Y$.
In the following we suppose that $\omega\in\mathcal{P}_n$, and so $|f^n(\omega)|<\delta$ for $n\geq 1$. 

First, we define a fixed partition $\mathcal{A}$ consisting of intervals
associated to the sequence $x_{n}$ described in Section \ref{sec.first-tail}, 
and for which 
the restriction of $f$ to these intervals in $\mathcal{A}$
is continuous. We set $J_n=[x_{n+1},x_n]$. 
If $s_{\ell}\not\in(x_{n+1},x_n)$ then we put $J_n\in\mathcal{A}$. If $s_{\ell}\in(x_{n+1},x_n)$, then
we write $J_n=J^{+}_n\cup J^{-}_{n}$, with $J^{-}_n=[x_{n+1},s_{\ell}]$ and $J^{+}_n=[s_{\ell},x_{n}]$. We put
$J^{\pm}_n\in\mathcal{A}$. Thus if for all $k\leq n$, $f^k(x)$ lies in the interior of intervals in $\mathcal{A}$,
then $f^n$ is (locally) continuous and differentiable at $x$.
We consider the following cases.
\begin{enumerate}
\item Suppose that $f^{n+1}(\omega)\cap\left(I_{\mathcal{D}}\cup\{1/2\}\right)=\emptyset$, and $|f^{n+1}(\omega)|<\delta$. Then
we put $\omega$ into $\mathcal{P}_{n+1}$. If $|f^{n+1}(\omega)|>\delta$ then $\omega$ has reached large scale, and
we put $\omega$ into $\mathcal{P}$ and set $T(\omega)=n+1$. This component is then taken out of circulation.
\item Suppose that $f^{n+1}(\omega)\cap\left(I_{\mathcal{D}}\cup\{1/2\}\right)\neq\emptyset$. Then we have various cases.
Again if $|f^{n+1}(\omega)|>\delta$ then we put $\omega$ into $\mathcal{P}$ and set $T(\omega)=n+1$. Otherwise,
we subdivide $\omega$ by intersecting the image $f^{n+1}(\omega)$ with $\mathcal{A}$ and then pull back. This is done as
follows:

$\bullet$ Suppose that $f^{n+1}(\omega)$ straddles at least three elements of $\mathcal{A}$. Then we subdivide $\omega$
into pieces $\omega'\subset\omega$ in such a way that $f^{n+1}(\omega')=J_r$, for some $J_r\in\mathcal{A}$.
If $f^{n+1}(\omega)\cap\mathcal{C}\neq\emptyset$, then we ensure each $\omega'$ is contained in the relevant $J^{\pm}_r$
accordingly. We assign a \emph{return depth} value $r$ to each $\omega'$ at time $n+1$. Now by construction of $\mathcal{A}$, the time
for points $x\in f^{n+1}(\omega')$ to escape from $[0,1/2]$ is of the order $r$. (The same is true for the asymptotic time to escape $I_{\mathcal{D}}$, i.e. in the case $r\to\infty$). For a component $\omega'$ such that $f^{n+1}(\omega')\subset J_r$, it may intersect $\mathcal{C}$
as it evolves under further iteration under $f$ up to time $r$. The number $N_r$ of intersections of $f^{k+n+1}(\omega')$ with $\mathcal{C}$ up to time $k\leq r$ is given by the relation:
$$r^{-1/\alpha}\approx e^{-N_r c_1},$$
and so $N_r\approx \frac{1}{c_1\alpha}\log r$. For each intersection of $f^{k+n+1}(\omega')$ with a given $s_{\ell}\in\mathcal{C}$ the
component $\omega'$ is chopped into further pieces (at most two). Hence up to time $n+1+r$, at most
$2^{N_r}\approx r^{\log 2/\alpha c_1}$
pieces are generated per component $\omega'\subset\omega$ that satisfies $f^{n+1}(\omega')=J_r$. Each of these $\leq 2^{N_r}$ pieces
$\omega''\subset\omega'$ are then put into $\mathcal{P}_{n+1}$. They are all declared to have return depth $r$, and 
for each piece $\omega''$, $f^{n+r}\mid\omega''$ is a diffeomorphism. The time a component spends in $I_{\mathcal{D}}$ before escaping
is comparable to $r-\mathcal{D}^{-\alpha}$, and hence asymptotically equal to $r$. A component cannot reach large scale during this time, 
since we assume $\mathcal{D}\ll\delta$.

$\bullet$ Suppose that $J_r\subset f^{n+1}(\omega)\subset J_{r-1}\cup J_r\cup J_{r+1}$. Then we assign a return depth value $r$ to $\omega$, 
and subdivide into pieces $\omega''\subset\omega$ as in the previous item, and so that $f^{n+r}\mid\omega''$ is a diffeomorphism. Each
$\omega''$ is put into $\mathcal{P}_{n+1}$.

$\bullet$ If $f^{n+1}(\omega)\subset J_{r-1}\cup J_r$, the we assign a return depth value $r$, and chop $\omega$ accordingly based upon
intersections with $\mathcal{C}$ as we iterate up to time $n+r$.

$\bullet$ If $f^{n+1}(\omega)\cap\{1/2\}\neq\emptyset$, then we chop $\omega$ into two pieces $\omega^{L}$ and $\omega^R$
with $f^{n+1}(\omega^L)\subset[0,1/2]$ and $f^{n+1}(\omega^R)\subset[1/2,1]$. We put
$\omega^L$ and $\omega^R$ into $\mathcal{P}_{n+1}$, but assign no return depth value. This we call an \emph{inessential} chop.
\end{enumerate}

Thus for each $\omega\in\mathcal{P}_n$, there is an itinerary $(t_1,r_1),\,(t_2,r_2),\ldots,(t_s,r_s)$, where
$t_i$ are the sequence of times corresponding to successive returns to $I_{\mathcal{D}}$, and $r_i$ are the associated return depths.
Due to chopping procedure described above, we in fact generate many components with the same itinerary. In fact the cardinality
of such components is equal to:
\begin{equation}\label{eq.count1}
\prod_{i=1}^{s} 2^{N_{r_i}}\leq C^s \prod_{i=1}^{s} 
r_{i}^{\left(\frac{\log 2}{c_1\alpha}\right)},
\end{equation}
for some uniform constant $C>0$. We should also account for inessential chops. However in the case where 
$f^{n+1}(\omega)\cap\{1/2\}\neq\emptyset$, the left piece $\omega^{L}$ is mapped to an interval that contains the 
hyperbolic fixed point $x=1$. Thus in a finite number of iterations, the image of $\omega^{L}$ eventually covers $[1/2,1]$
(diffeomorphically). The component 
$\omega^{R}$ is mapped into $I_{\mathcal{D}}$ (at time $n+2$) and it's combinatorial structure is then treated via the above algorithm. 
Thus intersections of components with $x=1/2$ do not add a significant contribution to the cardinality $\mathcal{P}_n$ relative to intersections with $I_{\mathcal{D}}$. We can absorb the counting of such components into the constant $C$ given in equation \eqref{eq.count1}. 

\subsection{Tail estimates on the stopping time $T$}
In this section we prove the following result:
\begin{prop}\label{prop.tail-T}
The following tail estimate for $T$ holds:
$$\mathrm{Leb}(\{\omega\in\mathcal{P}:T(\omega)>n\})=O\left(n^{-\frac{1}{\alpha}}\right).$$
\end{prop}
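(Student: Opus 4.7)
The plan is to bound $\mathrm{Leb}(T > n)$ by enumerating over all possible combinatorial itineraries $(t_i,r_i)_{i=1}^s$ of cylinders $\omega\in\mathcal{P}$, combining a measure bound per cylinder (from bounded distortion) with the counting bound $\prod_i 2^{N_{r_i}}$ already derived in Section~\ref{sec.comb}.

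First I would verify that for each $\omega\in\mathcal{P}_k$ the restriction $f^j|_{\omega}$ is a diffeomorphism with uniformly bounded distortion for all $j\leq k$.  This is essentially built into the construction: chopping at $\mathcal{C}$ ensures the forward orbit of $\omega$ up to time $T(\omega)$ never straddles a singularity, and the remaining distortion estimate reduces to the standard Koebe/Denjoy-type argument for a $C^{1+\eta}$ LSV-type branch, handled on the complement of a uniform neighborhood of $0$ by uniform expansion, and near $0$ by the explicit form of $f_{M_1}$.

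Next I would estimate the measure of an individual cylinder $\omega\in\mathcal{P}$ with itinerary $(r_1,\ldots,r_s)$.  By bounded distortion and $|f^{T(\omega)}(\omega)|\geq\delta$ one has
\[
|\omega|\leq\frac{C\delta}{|Df^{T(\omega)}|_{\omega}}.
\]
An excursion in $I_\mathcal{D}$ of depth $r_i$ contributes a derivative factor of order $r_i^{1+1/\alpha}$ (standard near indifferent fixed point estimate, see for instance the asymptotics of $x_n$ in Proposition~\ref{prop:tail-of-xn-sin}), and iterates spent outside $I_\mathcal{D}$ contribute uniform expansion $\lambda^{t_{\mathrm{out}}}$ with $\lambda>1$.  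This gives $|\omega|\leq C^s\prod_i r_i^{-1-1/\alpha}$.  Equivalently, all cylinders sharing a common itinerary are disjoint subsets of a single ``macro-cylinder'' whose measure is at most $C^s\prod_i r_i^{-1-1/\alpha}$.

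Combining this with the counting bound $C^s\prod_i r_i^{\log 2/(c_1\alpha)}$ and summing over itineraries with total time $>n$ yields
\[
\mathrm{Leb}(T>n)\leq\sum_{s\geq 1}\sum_{\sum r_i\gtrsim n}C^s\prod_{i=1}^s r_i^{\,\log 2/(c_1\alpha)-1-1/\alpha}.
\]
The hypothesis $c_1>\log 2$ (implicit in the definition of $C_0$ in \eqref{eq.intermittent2}) makes the per-factor exponent strictly less than $-1$, hence each factor is summable. The tail of the $s$-fold convolution of the density $r^{-1-\eta}$ with $\eta=(1-\log 2/c_1)/\alpha$ is, by standard subexponential estimates, of order $s\,n^{-\eta}$ for fixed $s$.

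The main obstacle is to recover the \emph{sharp} rate $n^{-1/\alpha}$ rather than the weaker $n^{-\eta}$ produced by the naive per-itinerary counting, and simultaneously to control the sum over $s$ (since $\sum_s C^s s$ diverges if one uses only the counting bound).  Both issues should be resolved by a geometric-escape argument: after each excursion through $I_\mathcal{D}$, a uniformly positive fraction of the still-tracked measure escapes to scale $\geq\delta$, because the single piece mapping onto a full $J_r$ without further chopping has measure proportional to $|J_r|$ and dominates the total contribution at that stage.  This yields $\mathrm{Leb}(\{\omega\in\mathcal{P}:\omega \text{ has }\geq s \text{ returns}\})\leq \theta^s$ for some $\theta<1$, and together with the per-itinerary macro-cylinder estimate $\prod_i r_i^{-1-1/\alpha}$ one splits $\sum_s$ at $s\asymp\log n$, using the geometric bound above this threshold and the subexponential tail below, to obtain $\mathrm{Leb}(T>n)=O(n^{-1/\alpha})$.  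The hard technical point is thus establishing the geometric escape, i.e.\ bounding below the escape fraction uniformly in the return depth $r$.
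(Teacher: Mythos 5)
Your identification of the key difficulty is exactly right: the naive per-cylinder bound $C^s\prod_i r_i^{-1-1/\alpha}$ combined with the chopping count $\prod_i 2^{N_{r_i}}$ gives $\prod_i r_i^{-\beta_1}$ with $\beta_1=1+\eta$ and $\eta=(1-c_1^{-1}\log 2)/\alpha<1/\alpha$, and summing over $\sum r_i\gtrsim n$ yields only $O(n^{-\eta})$. But your proposed repair does not close the gap. Your macro-cylinder bound $\leq C^s\prod_i r_i^{-1-1/\alpha}$ (i.e.\ \emph{without} the chopping factor) is not justified: the cylinders sharing an itinerary are disjoint intervals produced by chopping at $\mathcal{C}$, and $f^{t_j}$ has upward jumps across $\mathcal{C}$, so it is not a single diffeomorphism with bounded distortion on their union; the only estimate that falls out of the construction for the union is again $\prod_i r_i^{-\beta_1}$. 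Even granting your macro-cylinder bound, the $\log n$-threshold split does not produce $O(n^{-1/\alpha})$: for $s\lesssim\log n$ the $C^s$ factor contributes $n^{\log C}$, destroying the exponent, while for $s\gtrsim\log n$ you need $\theta\leq e^{-1/\alpha}$, which is not under your control. Finally, the geometric escape bound $\theta^s$ is essentially the content of Lemma~\ref{lem.largescale}, which the paper establishes \emph{after} Proposition~\ref{prop.tail-T} and uses only for the tail of $\varphi$, not for $T$, so invoking it here is circular as the argument is organised.

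The paper avoids all of this by a pigeon-hole rather than an $s$-threshold. It splits $\mathcal{P}_n$ into $\mathcal{P}^{(1)}_n$ ($\sum r_i\leq\eta n$), which is exponentially small because at least $(1-\eta)n$ iterates lie outside $I_\mathcal{D}$, where uniform expansion dominates chopping (Lemma~\ref{lem.comb1}), and $\mathcal{P}^{(2)}_n$ ($\sum r_i>\eta n$). For $\mathcal{P}^{(2)}_n$, the pigeon-hole gives an index $j$ with $r_j\geq\eta n/(2j^2)$, and the crucial observation (Lemma~\ref{lem.comb2}) is that the union over components sharing $(r_1,\dots,r_{j-1})$ and having that deep $j$-th excursion has measure $\lesssim\prod_{i<j}r_i^{-\beta_1}\cdot|[0,x_{r_j}]|$, with $|[0,x_{r_j}]|\approx r_j^{-1/\alpha}$. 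The dominant excursion contributes only the width of the target set, not a full $r_j^{-\beta_1}$ factor, while the remaining product is summable (Sublemma~\ref{slem.prod}, requiring $\mathcal{D}$ small) and produces a geometric factor $2^{-\beta_1 j}$ so that $\sum_j j^{2/\alpha}2^{-\beta_1 j}\,n^{-1/\alpha}=O(n^{-1/\alpha})$. This reaches the sharp exponent directly, with no geometric escape and no split in $s$.
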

The idea of proof follows that of \cite[Section 3.3]{BLvS}. In our case we have good control
of the derivative along the orbit for a given itinerary. However, the complication is that we have a countable set $\mathcal{C}$
of discontinuities. This means that we generate a large number of components with 
a given itinerary. We show
that expansion wins over chopping. By choosing $c_1$ sufficiently large, we ensure 
(on average) that 
components grow to large scale relative to being chopped up. 

\begin{proof}
To prove Proposition \ref{prop.tail-T}, we define two partitions $\mathcal{P}^{(1)}_n$ and $\mathcal{P}^{(2)}_n$ as follows.
Let $\eta\in(0,1)$ be a small constant to be fixed. For $\omega\in\mathcal{P}_n$ consider its itinerary of return depths
$(r_1,\ldots,r_s)$ defined up to time $n$.  Then we set
\begin{equation}
\mathcal{P}^{(1)}_n=\{\omega\in\mathcal{P}_n: T(\omega)>n,\sum_{i=1}^{s} r_i\leq \eta n\},
\end{equation}
and
\begin{equation}
\mathcal{P}^{(2)}_n=\{\omega\in\mathcal{P}_n: T(\omega)>n,\sum_{i=1}^{s} r_i> \eta n\},
\end{equation} 
Note that $r_1+\ldots +r_{s-1}\leq n$, but it is possible that $r_s\geq n$. We have the following lemma (for reference
compare to \cite[Lemma 3.5]{BLvS}).
\begin{lemma}\label{lem.comb1}
There exists $\theta>0$ such that
\begin{equation}
\sum_{\omega\in \mathcal{P}^{(1)}_n}|\omega|= O\left(e^{-\theta n}\right).
\end{equation}
\end{lemma}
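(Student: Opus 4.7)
The plan is to exploit the fact that for $\omega\in\mathcal{P}^{(1)}_n$, any orbit through $\omega$ spends at most $\eta n$ of its first $n$ iterates inside $I_{\mathcal{D}}$, so at least $(1-\eta)n$ iterates lie in $[\mathcal{D},1]$ where $f$ is uniformly expanding with some rate $\lambda_0>1$. By construction of $\mathcal{P}_n$, each restriction $f^k|_\omega$ with $k\le n$ is a diffeomorphism with controlled nonlinearity, so standard bounded distortion estimates (of the flavour used in~\cite{BLvS, DHL06}) apply. Combined with Proposition~\ref{prop:tail-of-xn-sin}, which via $|J_{r}|\asymp r^{-(1+1/\alpha)}$ and $|f^{r}(J_{r})|\asymp 1$ yields $|Df^{r}(y)|\asymp r^{1+1/\alpha}$ for $y$ in any sub-interval contained in $J_{r}$, one obtains for $x\in\omega$ with itinerary $(t_i,r_i)_{i=1}^s$
\[
|Df^n(x)|\ge C^{-1}\lambda_0^{(1-\eta)n}\prod_{i=1}^s r_i^{1+1/\alpha}.
\]
Since $|f^n(\omega)|<\delta$, bounded distortion then yields $|\omega|\le C\delta\,\lambda_0^{-(1-\eta)n}\prod_i r_i^{-(1+1/\alpha)}$.

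Next I would count components. By~\eqref{eq.count1}, the number of $\omega\in\mathcal{P}^{(1)}_n$ sharing a fixed itinerary $(t_i,r_i)_{i=1}^s$ is at most $C^s\prod_i r_i^{\log 2/(c_1\alpha)}$; the number of placements of return times $t_1<\cdots<t_s$ in $\{1,\dots,n\}$ is $\binom{n}{s}$; and the constraint $\sum_i r_i\le \eta n$ forces $s\le \eta n$. Assembling these,
\[
\sum_{\omega\in\mathcal{P}^{(1)}_n}|\omega|\le C\delta\,\lambda_0^{-(1-\eta)n}\sum_{s=0}^{\lfloor\eta n\rfloor}\binom{n}{s}C^s\prod_{i=1}^s\Bigl(\sum_{r\ge 1}r^{-(1+1/\alpha)+\log 2/(c_1\alpha)}\Bigr).
\]
Since $c_1$ is taken large (as required by the condition $s_1=e^{-c_1}\ll 1$ imposed after~\eqref{eq.intermittent2}), the exponent is strictly less than $-1$, so the inner sum is a finite constant $K=K(\alpha,c_1)$, giving the bound $\le C\delta\,\lambda_0^{-(1-\eta)n}\sum_{s\le \eta n}\binom{n}{s}(CK)^s$.

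The main obstacle, and the reason the entire construction was set up as above, is to balance the combinatorial growth against the exponential expansion. Using $\binom{n}{s}\le e^{nH(s/n)}$ where $H$ denotes the binary entropy, the right hand side is dominated by $e^{n(H(\eta)+\eta\log(CK))}\lambda_0^{-(1-\eta)n}$. Since $H(\eta)+\eta\log(CK)\to 0$ as $\eta\to 0^+$ while $\log\lambda_0>0$ is fixed, we may choose $\eta$ small enough that
\[
\theta:=(1-\eta)\log\lambda_0-H(\eta)-\eta\log(CK)>0,
\]
yielding $\sum_{\omega\in\mathcal{P}^{(1)}_n}|\omega|=O(e^{-\theta n})$ as required.
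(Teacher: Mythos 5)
Your proof is correct and follows essentially the same strategy as the paper: bound $|\omega|$ by the inverse derivative (expansion $\lambda_0$ outside $I_{\mathcal{D}}$, expansion $\approx r_i^{1+1/\alpha}$ during a depth-$r_i$ excursion), count components via the $\mathcal{C}$-chopping degeneracy of~\eqref{eq.count1}, and balance expansion against combinatorial growth by choosing $\eta$ small.

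There are a few minor bookkeeping differences worth noting. You explicitly count return-time placements with $\binom{n}{s}$ and then sum freely over the depths via a convergent series $K=\sum_r r^{-\beta_1}$ with $\beta_1=1+\tfrac{1}{\alpha}(1-c_1^{-1}\log 2)$; the paper instead crudely bounds $\prod_i r_i^{-\beta_1}\le 1$ and counts depth compositions $\sum r_i=k$ by $N_{k,s}\le\binom{k}{s}$ (Stirling), absorbing other degeneracies into $C^s$. Both routes yield an $e^{o(1)\cdot n}$ combinatorial factor defeated by $\lambda_0^{-(1-\eta)n}$ for $\eta$ small. Your use of the convergent sum $K$ is actually closer in spirit to how the paper handles Lemma~\ref{lem.comb2} (via Sublemma~\ref{slem.prod}), and your binary-entropy estimate $\binom{n}{s}\le e^{nH(s/n)}$ makes the balancing transparent. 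One small remark: your claim that the constraint yields $s\le\eta n$ is a weaker (but sufficient) bound than the paper's $s\le\mathcal{D}^\alpha \eta n$, since in fact each $r_i\ge\mathcal{D}^{-\alpha}$; either suffices. Finally, the exponent $\lambda^{n-\eta n}$ appearing in the paper's display~\eqref{eq.p1-est1} should be read as $\lambda^{-(n-\eta n)}$ (since $\lambda>1$ is the expansion constant); you correctly write $\lambda_0^{-(1-\eta)n}$ throughout.
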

\begin{proof}
Consider $\omega\in\mathcal{P}_n$ with itinerary of return depths
$(r_1,\ldots,r_s)$ defined up to time $n$. Since a return depth value is only associated to $\omega$ on a return to $I_{\mathcal{D}}$,
it follows that $r_i\geq\mathcal{D}^{-\alpha}>1$ for each $i\leq s$. Moreover this puts a bound on $s$, namely that
$s\leq \mathcal{D}^{\alpha}n$. It is also possible for this sequence to be empty if
$f^k(\omega)\cap I_{\mathcal{D}}=\emptyset$ for all $k\leq n$. For iterates evolving outside of $I_{\Delta}$, the map is
uniformly expanding, and $Df^k(x)\geq C\lambda^{k}$ if the orbit of $x$ resides outside of 
$I_{\mathcal{D}}$ up to time $k$. 
However, the rate constant $\lambda$ depends on $\mathcal{D}$. 
A blunt lower bound on the $\lambda$ is given by 
$$\lambda\geq Df_{\alpha}(\mathcal{D})=1+C_1\mathcal{D}^{\alpha},$$
where $C_1$ depends only on $\alpha$ and $c$.
More generally, if $x\in I_{\mathcal{D}}$ is such that $Df^n$ is defined at $x$, 
$f^k(x)\in(0,1/2)$ for all $k\leq n$, and
$f^n(x)\not\in I_{\mathcal{D}}$, then we have
$$Df^n(x)\approx n^{1+\frac{1}{\alpha}}.$$
A more refined analysis of the derivative can be obtained (but we do not need it here). Thus we have the following estimate:
\begin{equation}\label{eq.p1-est1}
\begin{split}
\sum_{\omega\in \mathcal{P}^{(1)}_n}|\omega| &\leq \sum_{s=1}^{n\mathcal{D}^{\alpha}}\sum_{k=0}^{\eta n}
\sum_{\sum r_i=k}|\omega(r_1,\ldots,r_s)|\\
&\leq \sum_{s=1}^{n\mathcal{D}^{\alpha}}\sum_{k=0}^{\eta n} N_{k,s}\lambda^{n-\eta n}
\left(\prod_{i=1}^{s} Cr^{-1-\frac{1}{\alpha}}_i
\cdot 2^{\frac{1}{c_1\alpha}\log r_i}\right)\\
&\leq \sum_{s=1}^{n\mathcal{D}^{\alpha}}\sum_{k=0}^{\eta n} C^sN_{k,s}\lambda^{n-\eta n}, 
\end{split}
\end{equation}
where $C>0$ is a uniform constant.
In the first line if equation \eqref{eq.p1-est1}, we sum over all relevant $\omega(r_1,\ldots,r_s)$ with
$\sum_{i=1}^{s} r_i=k$. In the second line, we let $N_{k,s}$ denote the number of (positive) integer sequences
$r_1,\ldots r_s$ with $\sum r_i=k$. The constant $\lambda$ is the expansion during the iteration outside $I_{\mathcal{D}}$.
The expression inside the term $\prod(\cdot)$ is formed by counting the number of components with itinerary $r_i$, and
calculating the derivative along the orbit in a $J_{r_i}$ (up to time $r_i$). The net contribution 
of each term inside the product is equal to:
\begin{equation}\label{eq.beta-ref}
r_{i}^{-1-\frac{1}{\alpha}(1-c^{-1}_1\log 2)}:=r_{i}^{-\beta_1},
\end{equation}
and we assume $c_1$ is large enough that $\beta_1>2$.
Hence the product is insignificant in the case of $\mathcal{P}^{(1)}_{n}$. We now estimate $N_{k,s}$. 
A standard counting argument implies that
$$N_{k,s}\leq {{k}\choose{s}}.$$
Here $0\leq k\leq \eta n$ and $s\leq\mathcal{D}^{\alpha} n$. However, this forces 
$s\leq \mathcal{D}^{\alpha}k$, since
each $r_i\geq\mathcal{D}^{-\alpha}$. In particular this implies that $N_{k,s}$ is clearly an overestimate, and further optimization
is possible (but the final estimate we obtain is sufficient for our purpose).  An application of Stirling's formula
implies that
\begin{equation*}
\begin{split}
N_{k,s} &\leq \left(1+\frac{2s}{k}\right)^k\cdot\left(\frac{k}{s}\right)^{s} \leq \exp\{2s+s(\log k-\log s)\}.
\end{split}
\end{equation*}

Since $\exp\{2s + s(\log k - \log s)\}$ is increasing in $s$ and $s \leq \epsilon k$ for some
$\epsilon<\mathcal{D}^{\alpha}$, we have:
\begin{equation*}
N_{k,s} \leq \exp\{\epsilon k(2-\log\epsilon)\}.
\end{equation*}
Hence
\begin{equation*}
\begin{split}
\sum_{\omega\in \mathcal{P}^{(1)}_n}|\omega| &\leq 
\sum_{s=1}^{n\mathcal{D}^{\alpha}\eta}\sum_{k=0}^{\eta n} C^s \exp\{\mathcal{D}^{\alpha}k(1-\log\epsilon)\}\cdot \lambda^{n-\eta n}\\
&\leq n^2\mathcal{D}^{\alpha}\eta^2\cdot C^{n\mathcal{D}^{\alpha}\eta}\exp\{\mathcal{D}^{\alpha}\eta n(1-\log\epsilon)\}\cdot \lambda^{n-\eta n}\\
&=O(e^{-\theta n}),
\end{split}
\end{equation*}
for some $\theta>0$. This bound follows from the fact that $\lambda$ is independent of $\eta$. However, note that the lower 
bound on $\lambda$ does depend on $\mathcal{D}$. This completes the proof of Lemma \ref{lem.comb1}.
\end{proof}

We now consider elements in $\mathcal{P}^{(2)}_n$. We have the following lemma (for reference
compare to \cite[Lemma 3.6]{BLvS}).
\begin{lemma}\label{lem.comb2}
The following estimate holds:
\begin{equation}
\sum_{\omega\in \mathcal{P}^{(2)}_n}|\omega| = O\left(n^{-\frac{1}{\alpha}}\right).
\end{equation}
\end{lemma}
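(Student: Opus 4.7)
\emph{Plan.} The argument adapts the strategy of BLvS~\cite{BLvS} to $\mathcal{P}_n^{(2)}$. Unlike in the proof of Lemma~\ref{lem.comb1}, we cannot harvest the exponential expansion outside $I_{\mathcal D}$, because on $\mathcal{P}_n^{(2)}$ most of the time $\le n$ is spent inside $I_{\mathcal D}$. The decay must therefore come entirely from the heavy-tailed distribution of the return depths.

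First I would reuse the per-itinerary estimate already obtained in the proof of Lemma~\ref{lem.comb1}. For each depth itinerary $(r_1,\dots,r_s)$,
\[
\sum_{\substack{\omega\in\mathcal{P}_n^{(2)}\\ \mathrm{itin.}(\omega)=(r_1,\dots,r_s)}} |\omega| \;\le\; C^s\, \lambda^{-(n-\sum r_i)}\prod_{i=1}^{s} r_i^{-\beta_1},
\]
where the factor $\prod r_i^{-1-1/\alpha}$ is the distortion/derivative contribution, the factor $\prod 2^{N_{r_i}}=\prod r_i^{\log 2/(c_1\alpha)}$ is the chopping contribution from the intersections with $\mathcal{C}$, and $\beta_1=1+\tfrac{1}{\alpha}\bigl(1-\tfrac{\log 2}{c_1}\bigr)$ is as in \eqref{eq.beta-ref}. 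We have chosen $c_1$ large enough that $\beta_1>2$. On $\mathcal{P}_n^{(2)}$ we have $\sum r_i>\eta n$, so $\lambda^{-(n-\sum r_i)}\le 1$ is trivially bounded and may be discarded.

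Next I would re-read the remaining sum probabilistically. Put $K=\sum_{r\ge\mathcal{D}^{-\alpha}} r^{-\beta_1}$ (finite and, crucially, small if $\mathcal D$ is small), and let $X_1,X_2,\dots$ be i.i.d.\ with $\P(X=r)=r^{-\beta_1}/K$ for $r\ge\mathcal{D}^{-\alpha}$. Then
\[
\sum_{\omega\in\mathcal{P}_n^{(2)}} |\omega| \;\le\; \sum_{s\ge 1} (CK)^{s}\, \P\bigl(X_1+\cdots+X_s>\eta n\bigr).
\]
Since $\beta_1>2$, the $X_i$ have finite mean and variance and a power-law tail of index $\beta_1-1$, so by the single-big-jump principle for heavy-tailed i.i.d.\ sums,
\[
\P\bigl(X_1+\cdots+X_s>\eta n\bigr)\;\le\; C'\, s\, n^{-(\beta_1-1)},
\]
uniformly in $s$ up to $s\asymp n/\mathbb{E}X$; beyond that, $(CK)^s$ gives a negligible exponentially small contribution once $\mathcal D$ is chosen to make $CK<1$.

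Summing over $s$: $\sum_s s (CK)^{s}<\infty$, so we obtain
\[
\sum_{\omega\in\mathcal{P}_n^{(2)}} |\omega| \;=\; O\bigl(n^{-(\beta_1-1)}\bigr)\;=\;O\bigl(n^{-1/\alpha}\bigr),
\]
where the final equality uses that $c_1$ is at our disposal: for $c_1$ large the loss $\log 2/(c_1\alpha)$ in the exponent is absorbed into the implicit constant. If $CK$ is not small we instead run the same Stirling-type counting of $N_{k,s}$ used for $\mathcal{P}^{(1)}_n$, combined with $s\le\mathcal{D}^\alpha n$, to close the sum.

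I expect the main obstacle to be control of the chopping factor $\prod 2^{N_{r_i}}$. The raw multiplicity degrades the per-excursion weight from the ideal $r_i^{-1-1/\alpha}$ to $r_i^{-\beta_1}$, thereby eroding the exponent in the final bound. Getting the advertised rate $n^{-1/\alpha}$ rather than a strictly weaker power relies on two points: (a) taking $c_1$ large enough that $\beta_1$ is arbitrarily close to $1+1/\alpha$; and (b) the fact that in the single-big-jump regime only one excursion has depth $\asymp n$, so the logarithmic penalty $\log 2/(c_1\alpha)$ enters the exponent only once. The other delicate step is to verify that the boundary contribution from itineraries whose final excursion $r_s$ extends past time $n$ (still in progress at time $n$) obeys the same bound; this is done by a direct tail estimate on $r_s$ using exactly the same heavy-tail input.
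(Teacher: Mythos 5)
Your approach correctly sets up the per-itinerary count and the reduction to a tail probability, but the final step is wrong, and the error is not cosmetic. You bound each excursion depth $r_i$ (including the big one) by the weight $r_i^{-\beta_1}$ with $\beta_1 = 1+\tfrac{1}{\alpha}\bigl(1-\tfrac{\log 2}{c_1}\bigr)$, which already carries the chopping penalty $2^{N_{r_i}}$. The single-big-jump bound then gives $n^{-(\beta_1-1)}$, and $\beta_1-1 = \tfrac{1}{\alpha}-\tfrac{\log 2}{c_1\alpha} < \tfrac{1}{\alpha}$ strictly, so $n^{-(\beta_1-1)}/n^{-1/\alpha} = n^{\log 2/(c_1\alpha)}\to\infty$. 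This polynomial loss cannot be ``absorbed into the implicit constant'' — making $c_1$ large shrinks the exponent gap but never closes it, and $c_1$ is a fixed parameter of the map, not something to be sent to infinity after $n$.

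You diagnose the right issue (``the logarithmic penalty enters the exponent only once in the single-big-jump regime''), but the formal estimate you wrote does not implement it: the big jump still carries the weight $r^{-\beta_1}$ from your choice of $\P(X=r)\propto r^{-\beta_1}$. The paper's proof avoids this by treating the designated big excursion \emph{geometrically} rather than probabilistically. After the pigeonhole step identifies an index $j$ with $r_j\geq \eta n/(2j^2)$, it groups all components sharing the itinerary $(t_1,\ldots,t_j)$ and depths $(r_1,\ldots,r_{j-1})$, observes that their union $\tilde\omega$ is an interval mapped by $f^{t_j}$ into $[0,x_{r_j}]$, and so contributes at most $|[0,x_{r_j}]|\approx r_j^{-1/\alpha}$ times the pullback factor $\prod_{i<j}r_i^{-\beta_1}$. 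The chopping multiplicity $2^{N_{r_j}}$ for the big excursion is thus never paid: the union of all the chopped sub-pieces is controlled by a single image-length bound, and all later excursions are simply discarded. Only the $j-1$ earlier (small) excursions pay $\beta_1$, and they are dominated by the factor $2^{-\beta_1 j}$ after Sublemma~\ref{slem.prod}. Summing $\sum_j (\eta n/2j^2)^{-1/\alpha}2^{-\beta_1 j}$ then gives exactly $O(n^{-1/\alpha})$. To repair your argument you would need to single out the maximal $X_i$ and reweight it with $r^{-1/\alpha}$ instead of $r^{-\beta_1}$; once you do, you are essentially performing the paper's pigeonhole decomposition.
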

\begin{proof}
Again, consider $\omega\in\mathcal{P}^{(2)}_n$ 
with itinerary of return depths
$(r_1,\ldots,r_s)$ defined up to time $n$, and with associated sequence of return times $(t_1,\ldots,t_s)$.
Since $\sum_{i=1}^s r_i\geq \eta n$, a simple argument using the pigeon hole principle implies that there exists $j\leq s$ such 
that $r_j\geq \eta n/(2j^2).$ Consider now a set $\tilde{\omega}$ formed by a union of components in $\mathcal{P}^{(2)}_n$ which share with $\omega$
the same itinerary of return times $(t_1,\ldots,t_j)$, and itinerary of return depths $(r_1,\ldots, r_{j-1})$, but having 
$r_j\geq \eta n/(2j^2).$ Thus $\tilde\omega$ is formed by a concatenation of components 
(that include $\omega$), and $f^{t_j}$ maps
$\tilde\omega$ diffeomorphically into an interval $(u,v)\subset [0,x_{r}]$ with
$x_r<x_{r_j}$. Hence $|f^{t_j}(\tilde\omega)|\leq |[0,x_{r_j}]|.$
Let $\tilde\omega(r_1,\ldots, r_{j-1},r)\subset\tilde{\omega}$ 
be a component in $\mathcal{P}^{(2)}_n$ with $f^{t_j}(\tilde\omega(r_1,\ldots,r_{j-1},r)$ 
having depth $r\geq \eta n/(2j^2)$.
We obtain
\begin{equation}\label{eq.p2-est1}
\begin{split}
\sum_{\omega\in \mathcal{P}^{(2)}_n}|\omega| &\leq \sum_{j=1}^{n}
\sum_{(r_1,\ldots,r_s)}|\tilde\omega(r_1,\ldots,r_j)|\\
 &\leq \sum_{j=1}^{n}\sum_{r\geq n/2j^2}|[0,x_{r}]|
\sum_{(r_1,\ldots,r_{j-1})}\prod_{i=1}^{j-1}r^{-\beta_1}_i\\
&\leq \sum_{j=1}^{n}\sum_{r\geq n/2j^2} Cr^{-1-\frac{1}{\alpha}} 2^{-\beta_1 j} 
\sum_{(r_1,\ldots,r_{j-1})}\prod_{i=1}^{j-1}\left(\frac{2}{r_i}\right)^{\beta_1},
\end{split}
\end{equation}
where we recall $\beta_1>2$ from equation \eqref{eq.beta-ref}, and $C>0$ is a uniform constant.
To manipulate the product term we use the following sublemma:
\begin{sublemma}\label{slem.prod}
Suppose $U_i$ is a sequence with $\sum_i U_i<\infty$. Then for all $\zeta>0$ there exists $n_0$ such that
$$\sum_{s\geq 1}\sum_{\underset{n_i\geq{n}_0}{(n_1,\ldots, n_s)}}\prod_{n_i}\zeta U_i\leq 1.$$
\end{sublemma}
We apply Sublemma \ref{slem.prod} to the sequence $U_i=i^{-\beta_1}$ with $\zeta=2^{\beta_1}$, and choose $\mathcal{D}$ small
enough that $\mathcal{D}^{-\alpha}>n_0$. Thus continuing with the estimate in equation \eqref{eq.p2-est1} we obtain:
\begin{equation}\label{eq.p2-est2}
\sum_{\omega\in \mathcal{P}^{(2)}_n}|\omega|\leq \sum_{j=1}^{n} (\eta n/2j^2)^{-\frac{1}{\alpha}} 2^{-\beta_1 j}
=O\left( n^{-\frac{1}{\alpha}}\right).
\end{equation}

\end{proof}

The proof of Proposition \ref{prop.tail-T} now follows from Lemmas \ref{lem.comb1} 
and \ref{lem.comb2}. We obtain that $|\{T(\omega)>n\}|=O(n^{-1/\alpha}).$
We note that the singularity set $\mathcal{C}$ does not affect this power
law asymptotic. The singularity set affects the $O(\cdot)$ multiplier through
the constant $\beta_1$.
\end{proof}

\subsection{Completing the proof of Theorem \ref{thm.singmap}}
Given the tail estimate of Proposition \ref{prop.tail-T}, the remaining part of the proof follows a standard argument, e.g. 
\cite{BLvS, Young}. These arguments don't tend to rely heavily on the precise form of the map.
In the following lemma, the significance of reaching size $\delta$ is made transparent. In particular we show that
for $\omega\in\mathcal{P}$, a definite fraction of $\omega$ diffeomorphically maps onto $Y$ in finite time beyond $T(\omega)$.
We state the following.
\begin{lemma}\label{lem.largescale}
There exists $t_0>0$, $\xi>0$, such that for all $\omega\in\mathcal{P}$, there exists $\tilde{\omega}\subset\omega$
with the following properties:
\begin{enumerate}
\item $f^{T(\omega)+t}(\tilde\omega)=Y$, for some $t\leq t_0$, and this action is a diffeomorphism with bounded distortion.
\item $|\tilde\omega|\geq \xi|\omega|$;
\item Both components of $f^{T(\omega)}(\omega\setminus\tilde\omega)$ are of size $\delta/3$.
\end{enumerate}
\end{lemma}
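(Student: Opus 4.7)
The plan is to first set up $\tilde\omega$ to satisfy conditions (2) and (3) by a simple trimming procedure in the image, and then to invoke a uniform \emph{covering in bounded time} property of $f$ to verify (1). From the construction of $\mathcal{P}$ in Section~\ref{sec.comb}, $f^{T(\omega)}|_\omega$ is a diffeomorphism onto the interval $J_\omega := f^{T(\omega)}(\omega)$ with $|J_\omega|\geq\delta$ and uniformly bounded distortion: there exists $K>0$, independent of $\omega\in\mathcal{P}$, with $|Df^{T(\omega)}(x)|/|Df^{T(\omega)}(y)|\leq K$ for all $x,y\in\omega$.

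First I would define $\tilde\omega$ by deleting from $J_\omega$ the two extreme subintervals of length exactly $\delta/3$, setting $J := f^{T(\omega)}(\tilde\omega)$ to be the remaining central interval and $\tilde\omega := (f^{T(\omega)}|_\omega)^{-1}(J)$. Property (3) is then immediate, and $|J|\geq|J_\omega|-2\delta/3\geq\delta/3$. Bounded distortion at once gives $|\tilde\omega|/|\omega|\geq K^{-1}|J|/|J_\omega|\geq 1/(3K)$, so (2) holds with $\xi := 1/(3K)$.

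For (1), the substantive part, I would establish the auxiliary \emph{covering claim}: there is a uniform $t_0 = t_0(\delta)$ such that any interval $I\subset[0,1]$ with $|I|\geq \delta/3$ contains a subinterval mapped diffeomorphically onto $Y$ by some $f^t$, $t\leq t_0$, with bounded distortion. Two ingredients drive this: (a) since $\delta/3 > \sqrt{\mathcal{D}}/3$ and $\mathcal{D}$ is chosen small in the construction of $\mathcal{P}$, a definite portion of $I$ lies in the uniformly expanding region $[\mathcal{D},1]\setminus\{1/2\}$, where $Df \geq 1 + C_1 \mathcal{D}^\alpha$; (b) on $Y$, $f$ is affine doubling. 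Iterating $I$ forward, after at most $t'=t'(\delta,\mathcal{D})$ steps the image enters $Y$ with macroscopic length, and a further $O(\log(1/\delta))$ doubling steps in $Y$ produce an image covering $Y$; inverting the appropriate monotonicity branch of $f^t$ inside $I$ furnishes the required subinterval.

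The main obstacle is the apparent tension between (3), which fixes the central image $J$ of $\tilde\omega$ precisely, and (1), which might seem to demand further trimming of $\tilde\omega$. I would resolve this by exploiting the buffer of total length $2\delta/3$ already deleted in the first step: this buffer must be large enough to absorb (i) the finite combinatorial choice of the inverse branch of $f^t$ on $J$ associated with the at most $O(\log(1/\delta))$ singularities $s_\ell\in\mathcal{C}$ that $J$ or its early iterates may cross, and (ii) the alignment needed so that $f^t(J)$ hits $Y$ exactly. Both contributions are uniformly bounded, so by choosing $\delta$ appropriately larger than $\sqrt{\mathcal{D}}$ at the outset of the construction in Section~\ref{sec.comb}, the buffer is comfortably sufficient, and a single $\tilde\omega$ satisfying all three conclusions is obtained with uniform $t_0$ and $\xi$.
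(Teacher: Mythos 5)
Your proposal takes a genuinely different route from the paper's, and it has gaps. The paper chooses $\tilde\omega$ directly so that $f^{T(\omega)}(\tilde\omega)$ equals a dynamically defined interval of the form $[x_{n+1},x_n]$ with $n$ bounded in terms of $\xi$ and $\delta$; since $f^{n+1}([x_{n+1},x_n])=Y$ by construction of the sequence $(x_n)$, item (1) holds automatically, and there is no "covering claim" to prove and no tension with (3) to resolve. You instead cut off a $\delta/3$ buffer on each side of $J_\omega=f^{T(\omega)}(\omega)$ and invoke a covering-in-bounded-time claim for the central piece $J$, which leads to the problems below.

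The first and most serious gap concerns distortion. You assert a uniform distortion constant $K$ for $f^{T(\omega)}|_\omega$ ``from the construction of $\mathcal{P}$.'' This is not available in the required form: $T(\omega)$ is defined as the \emph{first} time the image reaches size $\delta$, and that alone does not bound the distortion of $f^{T(\omega)}$ over all of $\omega$. The paper's actual argument is that both branches of $f_{M_1}$ have negative Schwarzian derivative, so the Koebe Principle gives bounded distortion of $f^{T(\omega)}$ on $\tilde\omega$ \emph{because} both components of $f^{T(\omega)}(\omega\setminus\tilde\omega)$ have size at least $\delta/3$ — that is, condition (3) is the Koebe space, not merely a trimming recipe. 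Your proposal never mentions the Schwarzian, never invokes Koebe, and treats (3) as decorative, so the central analytic ingredient of the proof is missing. Secondly, the covering claim is stated but not proved, and it is more delicate than the sketch suggests: pushing $J$ forward may cross $1/2$ (producing a disconnected image and forcing a branch choice) and may re-enter $I_{\mathcal D}$ where the countably many singularities $s_\ell\in\mathcal{C}$ force further branch choices; showing that a surviving monotone piece still covers $Y$ within a \emph{uniformly bounded} time is exactly what the paper's choice of $[x_{n+1},x_n]$ finesses, since all $n{+}1$ iterates of that interval are monotone by construction of $\mathcal{A}$. Finally, the reconciliation of (1) and (3) is not coherent: the $2\delta/3$ buffer cannot be used to ``align'' $f^t(J)$ with $Y$, since $J$ is already fixed by the first cut. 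What you would actually need is to redefine $\tilde\omega$ as the $f^{T(\omega)}$-preimage of the covering subinterval $I''\subset J$ and then re-verify (2) and (3) for the new $\tilde\omega$ (for (2) one additionally needs a lower bound on $|I''|$ from $t\le t_0$ and a uniform upper bound on $|Df|$); none of this is carried out, so the closing assertion that ``a single $\tilde\omega$ satisfying all three conclusions is obtained'' is unsubstantiated.
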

\begin{proof}
The proof of the lemma follows from the fact that $\mathcal{D}\ll\delta$, e.g. $\delta>\sqrt{\Delta}$ will suffice. 
The cases to consider include i) when $f^{T(\omega)}(\omega)\cap\{1/2\}\neq\emptyset$, 
ii) when $f^{T(\omega)}(\omega)\cap I_{\mathcal{D}}\neq\emptyset$, or iii) when
$f^{T(\omega)}(\omega)\cap\left(\{1/2\}\cup I_{\mathcal{D}}\right)=\emptyset.$ 
For example in the case $f^{T(\omega)}(\omega)\cap I_{\mathcal{D}}\neq\emptyset$ we consider the intersection of
$f^{T(\omega)}(\omega)$ with intervals of the form $[x_{n},x_{n+1}]$ with $x_{n}>\mathcal{D}$. For $n$ large we have 
$x_{n}\approx n^{-1/\alpha}$. Given $\xi>0$, we can choose $n$ so that $n^{-1/\alpha-1}\approx\xi$, and moreover that
both components of $f^{T(\omega)}(\omega)\setminus[x_{n+1},x_n]$ are larger than $\delta/3$. The other cases reduce to similar scenarios.

For the distortion, we note that the Schwarzian derivative $Sf = f''' f' - \frac32 (f'')^2 = -6$ for $x > \frac12$ and 
$=C_0(\tilde\alpha^2-1)\tilde\alpha x^{\tilde\alpha-2} - 
C^2(\tilde\alpha+1)^2\tilde\alpha \frac{2+5\tilde\alpha}{2} x^{2(\tilde \alpha-1)} < 0$
for $x < \frac12$ inside each interval where$f$ is continuous.
Here $\tilde \alpha = \alpha-c^{-1}_1\log 2 \in (0,1)$.
Therefore $f^{T(\omega)}(\omega)$ has negative Schwarzian derivative for each $\omega$ in the statement of this lemma.
Since both components of $f^{T(\omega)}(\omega\setminus\tilde\omega)$ are of size $\delta/3$,
the Koebe Principle (see \cite{MeSt}) gives boundedness of distortion of $f^{T(\omega)}$ on $\tilde\omega$, and the remaining $\leq t_0$ will not destroy this.
\end{proof}

\begin{rmk} 
\label{rmk-GMfM2}An argument similar to the Proof of \ref{lem.largescale} shows that  the Schwarzian derivative of  $f_{M_1}$, $f_{M_1}^1$ is negative, yielding the 
required distortion properties. Since the big image property is automatically satisfies in the setup of the first induced maps $F$ with $F=(f_{M_1})^\tau$ and  $F=(f_{M_1}^1)^\tau$,
in these cases the map $F$ is Gibbs Markov.
\end{rmk}

\subsubsection*{Tail of the return time function $\varphi$}
Thus, in Lemma \ref{lem.largescale} the left and right components $\omega^{L}$, $\omega^R$ of $\omega\setminus\tilde\omega$ are treated as new starting intervals. We then apply the combinatorial algorithm to each of the two components separately, i.e.
we put $\mathcal{P}_0=\omega^{L,R}$, and then find $|\{x:T(x)>n|x\in\omega^{L,R}\}$. For the central component $\tilde\omega$, this now becomes an element of $\mathcal{Q}$ for 
the Gibbs-Markov map $F:=f^\varphi$, and we define $\varphi(\tilde\omega)=T(\tilde\omega)+t(\omega)$.
This process is repeated, and thus each element of $\mathcal{Q}$ has an associated itinerary of large scale times
$T_1(\omega),\ldots T_n(\omega),\ldots$, before return time $\varphi(\omega)$.
For $x\in Y$, we write $T_1(x):=T(x)$, and define $\{T_{n}\}$ recursively via $T_{n+1}=T_{n}(x)+T(f^{T_n(x)}(x))$. 
Thus for all $\omega\in\mathcal{Q}$, there exists $s\geq 1$, and $t\leq t_0$ such that we have $\varphi(\omega)=T_s(\omega)+t(\omega)$.
and gives rise to a further sequence of partitions $\mathcal{Q}_n$ and stopping times $T_1,\ldots, T_n,\ldots$ We have the following
lemma:
\begin{lemma} For every $i$,
$$\mathrm{Leb}(\{x:T_{i+1}(x)\,\textrm{exists and}\,T_{i+1}>T_i+k\mid T_i\})\leq D_{\delta}\mathrm{Leb}(T>k),$$
where $D_{\delta}>0$ is a constant. 
\end{lemma}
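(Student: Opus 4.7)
The plan is to reduce the conditional estimate to the unconditional tail bound for $T$ that was established in Proposition~\ref{prop.tail-T}, using bounded distortion and the fact that $f^{T_i}$ maps each element of the partition $\mathcal{Q}_i$ onto an interval of definite large scale.

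First I would fix an element $\omega_i$ of the partition $\mathcal{Q}_i$ generated by the first $i$ returns to large scale, so that $T_i$ is constant on $\omega_i$. By construction (see the combinatorial scheme of Section~\ref{sec.comb} together with Lemma~\ref{lem.largescale}) the map $f^{T_i}\colon\omega_i\to f^{T_i}(\omega_i)$ is a diffeomorphism with uniformly bounded distortion, and the image $f^{T_i}(\omega_i)$ contains an interval $I_{\omega_i}\subset Y$ whose length is bounded below by $\delta/3$. This uniform lower bound on the image size is the crucial structural input.

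Next I would apply the combinatorial algorithm afresh to $f^{T_i}(\omega_i)\subset Y$, using the fact that the construction of $T$ in Section~\ref{sec.comb} depends only on the starting interval, not on its past history. Thus the set $\{T_{i+1}>T_i+k\}\cap\omega_i$ corresponds, under $f^{T_i}$, precisely to the set $\{T>k\}\cap f^{T_i}(\omega_i)$. By bounded distortion,
\begin{equation*}
\frac{\mathrm{Leb}(\{T_{i+1}>T_i+k\}\cap\omega_i)}{\mathrm{Leb}(\omega_i)}\;\le\;C\,\frac{\mathrm{Leb}(\{T>k\}\cap f^{T_i}(\omega_i))}{\mathrm{Leb}(f^{T_i}(\omega_i))},
\end{equation*}
with $C>0$ depending only on the distortion constant.

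To control the right-hand side by $\mathrm{Leb}(\{T>k\})$ on all of $Y$, I would use the big-image property: $|f^{T_i}(\omega_i)|\ge\delta/3$ while $|Y|=1/2$, so the denominator is uniformly bounded below, and hence
\begin{equation*}
\mathrm{Leb}(\{T>k\}\cap f^{T_i}(\omega_i))\;\le\;\mathrm{Leb}(\{T>k\}).
\end{equation*}
Combining these two inequalities gives exactly the claim with $D_\delta=3C/\delta$. The main obstacle is therefore not the probabilistic bookkeeping but verifying that the distortion constant produced along the combinatorial tree of Section~\ref{sec.comb} is indeed uniform across all $\omega_i\in\mathcal{Q}_i$ and all $i$; this in turn relies on the negative Schwarzian property noted in the proof of Lemma~\ref{lem.largescale} and on the Koebe Principle applied along the orbit between consecutive large-scale times. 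Summing the conditional estimate over $\omega_i\in\mathcal{Q}_i$ yields the stated bound on $\mathrm{Leb}(\{T_{i+1}>T_i+k\mid T_i\})$.
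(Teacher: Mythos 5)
Your proposal is essentially the standard bounded-distortion argument that the paper itself invokes by citing \cite[Lemma 4.4]{BLvS} and Young~\cite{Young}, so the approach is the intended one. Two small points would need tightening in a complete write-up. First, $f^{T_i}(\omega_i)$ is an interval of length $\geq\delta$ (and the two components $f^{T_i}(\omega_i\setminus\tilde\omega_i)$ of length $\geq\delta/3$), but this interval is not in general contained in $Y$; it is only after $\leq t_0$ further iterates that the central piece $\tilde\omega_i$ lands on $Y$. The relevant structural input is simply that the image has definite size $\geq\delta/3$, not that it sits in $Y$. Second, the identity ``the set $\{T_{i+1}>T_i+k\}\cap\omega_i$ corresponds, under $f^{T_i}$, \emph{precisely} to $\{T>k\}\cap f^{T_i}(\omega_i)$'' conflates two different stopping times: the $T$ appearing on the right is the one produced by restarting the combinatorial algorithm with $\mathcal{P}_0=f^{T_i}(\omega^{L,R}_i)$ (equivalently $\omega^{L,R}_i$), not the original $T$ on $Y$, so the inclusion $\{T>k\}\cap f^{T_i}(\omega_i)\subset\{T>k\}$ is not a literal set inclusion. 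What one actually needs, and what the combinatorial estimates of Proposition~\ref{prop.tail-T} together with Lemmas~\ref{lem.comb1}--\ref{lem.comb2} deliver, is that the tail bound $\mathrm{Leb}(T>k)=O(k^{-1/\alpha})$ holds with constants \emph{uniform over all starting intervals of size $\geq\delta/3$}; combined with bounded distortion along $f^{T_i}|_{\omega_i}$ (Koebe, using the $\delta/3$ buffer from Lemma~\ref{lem.largescale}) this gives the stated comparison with a constant $D_\delta$ depending only on $\delta$ and the distortion bound. Your final constant $D_\delta=3C/\delta$ and the concluding summation over $\omega_i\in\mathcal{Q}_i$ are correct.
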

\begin{proof}
This is a standard argument using bounded distortion, the definition of $T$ and Lemma \ref{lem.largescale}.
See \cite[Lemma 4.4.]{BLvS}, and also \cite{Young}.
\end{proof}

Now recall each $\omega\in\mathcal{Q}$ has the sequence:
$$0=T_0<T_1<\ldots <T_{s(\omega)}<\varphi(\omega).$$
Let $\mathcal{Q}^{(n)}=\{\omega\in\mathcal{Q}:\varphi(\omega)>n\}$, and let
$$\mathcal{Q}^{(n)}_i=\{\omega\in\mathcal{Q}^{(n)}:T_{i-1}<n\leq T_i\}$$
denote the set of elements in $\mathcal{Q}^{(n)}$ which have exactly $i-1$ large scale times before $n$.
We let
$$|\mathcal{Q}^{(n)}_i|=\sum_{\omega\in\mathcal{Q}^{(n)}_i}|\omega|,\quad |\mathcal{Q}^{(n)}|=\sum_{i=1}^{n}|\mathcal{Q}^{(n)}_i|.$$

\subsubsection*{Completing the proof of Theorem \ref{thm.singmap}.}
\label{subsec-comlpthdyn}
Let $\eta>0$ be a small constant, and write:
$$|\{\varphi>n\}|=|\mathcal{Q}^{(n)}|=\sum_{i=1}^{\eta n}|\mathcal{Q}^{(n)}_i|+\sum_{i=\eta n}^{n}|\mathcal{Q}^{(n)}_i|.$$
By Lemma \ref{lem.largescale} we have:
$$|\mathcal{Q}^{(n)}_i|/|\mathcal{Q}^{(n)}_{i-1}|\leq 1-\xi,$$
and therefore
$$\sum_{i=\eta n}^{n}|\mathcal{Q}^{(n)}_i|\leq \frac{1}{\xi}(1-\xi)^{\eta n}.$$
To estimate $\sum_{i=1}^{\eta n}|\mathcal{Q}^{(n)}_i|$ we again use a combinatorial argument (and see \cite{BLvS,Young} for similar).
For each element of $\mathcal{Q}^{(n)}_i$, we can assign an itinerary $(k_1,\ldots,k_i)$ with
$\sum_j k_j=n$, and $k_j=T_{j}-T_{j-1}$ for all $j\leq i-1$. We put $k_i=n-T_{i-1}$. A pigeon hole argument implies that
for any such itinerary, there exists $k_j$ with $k_j\geq n/i$. Let
$$\mathcal{Q}^{(n)}_{i,j}=\{\omega\in\mathcal{Q}^{(n)}_i, k_{j'}<n/i,\,\textrm{for $j'<j$ and $k_j\geq n/i$}\}.$$
We have the following
\begin{equation*}
\begin{split}
\sum_{i=1}^{\eta n}|\mathcal{Q}^{(n)}_i| & =\sum_{i=1}^{\eta n}\sum_{j=1}^{i}|\mathcal{Q}^{(n)}_{i,j}|
\leq \sum_{i=1}^{\eta n} i(1-\xi)^{i-1}|\{T>n/i\}\\
&\leq \sum_{i=1}^{\eta n} i(1-\xi)^{i-1}\cdot \left(\frac{i}{n}\right)^{\frac{1}{\alpha}}
=O(n^{-\frac{1}{\alpha}}),
\end{split}
\end{equation*}
which gives the required bound. This completes the tail estimate for the return time function, 
and hence the proof of Theorem \ref{thm.singmap}.

\subsection{Verification of (H0)-(H2) for $f_{M_1}$.}
\label{subsec-ver}
We now verify (H0)-(H3) for the map $f_{M_1}(x)$.
Firstly, (H0) immediately follows from the statement of Theorem 
\ref{thm.singmap}.
To verify (H1), we can adapt \cite[Lemma 3.6]{BLvS} via 
Lemma \ref{lem.comb2} to study the asymptotics of the re-inducing time
$\rho:Y\to\mathbb{N}$, (see \cite{BT18}). We obtain
$$\mu_0(\rho > L : \varphi > n)=O\left(\lambda^{L}_0 n^{-\alpha}\right),$$ 
for some 
$\lambda_0 \in (0,1)$. This then leads to
$$
\int_{\varphi > n} \rho \ d\mu_0\leq C_1 \sum_{L \geq 0} \lambda^{L}_{0} n^{-\alpha}
=O(n^{-\alpha})\leq C_2 \mu_0(\varphi > n),
$$
for uniform constants $C_1, C_2$.
To verify (H2), we note that the combinatorial algorithm
described in Section \ref{sec.comb} requires that for $\omega\in\mathcal{P}$
we either have $f^{k}(\omega)\subset Y$ of $f^{k}(\omega)\cap Y=\emptyset$.

\section{Proof of Theorem~\ref{thm-infst} for larger classes of sequences}
\label{sec-exactst}

Recall  $f_{M}$  with $M=M_1$ or $M=M_2$ defined in \eqref{eq.intermittent2} and~\eqref{eq:def-M-sine}. 
Let $k_n=\lfloor c^n\rfloor$ with $c=e^{\alpha c_1}$ and  $c=e^{\alpha c_2}$, respectively.
As shown in Subsection~\ref{sec.h3-check}, (H3) holds along $k_n$. We recall that  in the setup of $f_{M_2}$, the stronger assumption (H4)  holds (by an argument  similar to the one  Subsection~\ref{sec.h3-check} with an easier analysis since $\varphi\equiv\tau$).

Regarding the verification of (H5) on the tail of the induced observable we recall the following result.
Recall that  $g= g_X \circ \pi$ and that $g_{Y}=\sum_{j=0}^{\varphi-1} g\circ f_\Delta^j$.  
\begin{lemma}
\label{lemma-obs}Let $x_0\in X$ be the indifferent periodic point of $f_M$. Let $g_X:X\to\R$ be  a H{\"o}lder observable of exponent $\nu>1/\beta-1/2$
and suppose $g_X(x_0)\ne 0$.
Then $\mu_Y(|g_Y|>t)=C\mu_Y(\varphi>t)(1+o(1))$ as $t\to\infty$ for some $C>0$.
\end{lemma}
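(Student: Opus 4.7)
\emph{Plan.} Set $a := g_X(x_0) \neq 0$; in both settings the indifferent periodic point is the fixed point $x_0 = 0$. The strategy is to prove the pointwise approximation $g_Y(y) = a\,\varphi(y) + o(\varphi(y))$ uniformly for large $\varphi(y)$, and then translate this into the tail equivalence by showing the symmetric difference $\{|g_Y|>t\}\,\triangle\,\{|a|\varphi>t\}$ has measure negligible against $\mu_Y(\varphi>t)$.

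For the pointwise bound, $\nu$-H\"older continuity of $g_X$ at $0$ gives
\begin{equation*}
\bigl|g_Y(y) - a\,\varphi(y)\bigr| \;\leq\; L\sum_{j=0}^{\varphi(y)-1}\bigl|f_M^j(y)\bigr|^{\nu}.
\end{equation*}
For $y$ with $\varphi(y) = n$ large, the combinatorial construction of Section~\ref{proof.thm.sing}, together with (H1), shows that $\varphi$ bundles only a uniformly bounded number of first-return excursions $\tau$, during each of which the orbit shadows a tail of the backward sequence $(x_k)$ produced in Section~\ref{sec.first-tail}. Since $x_k \lesssim k^{-1/\alpha}$ by Proposition~\ref{prop:tail-of-xn-sin},
\begin{equation*}
\sum_{j=0}^{n-1}|f_M^j(y)|^{\nu} \;\lesssim\; \sum_{k=1}^n k^{-\nu/\alpha} \;=\; O\bigl(\max\{n^{1-\nu/\alpha},\log n,1\}\bigr),
\end{equation*}
which is $o(n)$ for every $\nu > 0$; the hypothesis $\nu > 1/\beta - 1/2 = \alpha - 1/2$ provides ample slack.

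Translating to tails, the pointwise bound places $\{|g_Y|>t\}\,\triangle\,\{|a|\varphi>t\}$ inside $\{|\varphi - t/|a||\lesssim t^{1-\nu/\alpha}\}$. The bound $\mu_Y(\varphi = n) = O(n^{-(1+\beta)}\ell(n))$ — obtained by differencing Theorem~\ref{thm.singmap} with the refined asymptotic \eqref{eq:tail-of-tau-sin-final} — gives
\begin{equation*}
\mu_Y(|g_Y|>t) \;=\; \mu_Y\bigl(\varphi > t/|a|\bigr)\,\bigl(1+o(1)\bigr),
\end{equation*}
and the $|a|^{\beta}$ scaling emerges from the regular variation of $\mu_Y(\varphi>\cdot)$, while the log-periodic function $\tilde M$ of \eqref{eq:tail-of-tau-sin-final} yields the limiting positive constant $C$ in (H5).

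\emph{Main obstacle.} The delicate step is the shadowing assertion in the second paragraph. The singularity set $\mathcal{C}$ prevents one from directly identifying $(f_M^j(y))_{0\leq j<\varphi(y)}$ with a reversed segment of $(x_k)$, and the reinduced time $\varphi$ for $f_{M_1}$ may absorb several first-return times $\tau$, each depositing its own ``endpoint'' contribution to the H\"older sum. Propagating the combinatorial bookkeeping of Section~\ref{sec.comb} — which caps both the number of excursions and the number of chops at a singularity — is precisely what keeps the error $o(\varphi)$; for $f_{M_2}$, where $\varphi \equiv \tau$, the argument reduces to a single excursion and the shadowing is immediate.
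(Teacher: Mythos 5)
Your proposal follows essentially the same route as the paper: expand $g_Y(y) - a\varphi(y) = \sum_{j}\bigl(g_X(f_M^j y) - g_X(x_0)\bigr)$, use $\nu$-H\"older continuity and the polynomial escape rate from the neutral fixed point to bound this by $O(\varphi(y)^{1-\nu\beta}) = o(\varphi(y))$, then translate to a tail equivalence. For $f_{M_2}$, where $\varphi \equiv \tau$ and each $\varphi$-block is a single excursion shadowing the backward sequence $(x_k)$, this is exactly the paper's proof (which adapts Gou\'ezel's first-return argument verbatim).

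The claim in your \emph{main obstacle} paragraph — that the combinatorics of Section~\ref{proof.thm.sing} together with (H1) show $\varphi$ ``bundles only a uniformly bounded number of first-return excursions'' — is not correct. (H1) is an integral condition, and the verification in Section~\ref{subsec-ver} yields only the measure-decay bound $\mu_Y(\rho>L : \varphi>n)=O(\lambda_0^{L}n^{-\alpha})$: exponential in $L$ in measure, but no pointwise cap on $\rho$. For $y$ whose reinduced block consists of many short first-return excursions, $\sum_{j<\varphi(y)}|f_M^j(y)|^\nu$ is comparable to $\varphi(y)$, so the pointwise shadowing estimate genuinely fails on such orbits; what is needed is a measure-theoretic step that discards $\{\rho>L_n\}$ for a slowly growing threshold $L_n$ and shows its contribution to $\mu_Y(|g_Y|>t)$ is negligible against $\mu_Y(\varphi>t)$. (The paper's own one-paragraph proof is terse on this same point, noting only that the cited argument is ``written for first return times,'' so your instinct to flag it as the delicate step is right even though the resolution you propose does not hold.)
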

\begin{proof}
The argument goes as in~\cite[Proof of Theorem 1.3]{Gouezel04b}. 
We recall the main elements for completeness since the argument there is written for first return times.

Set $\tilde g_Y=\sum_{j=0}^{\varphi-1} g_X\circ f^j$. 
Let $y,k$ so that $\varphi(y)=k$, assume $g_X(x_0)=C>0$ and write
\[
\tilde g_Y(y)= Ck+ \sum_{j=0}^{k-1} \left(g_X ( f^j(y))- g_X(f^j(x_0))\right).
\]
Since $g_X$ is $\nu$-H{\"o}lder,
$\sum_{j=0}^{k-1} |g_X ( f^j(y))- g_X(f^j(x_0))|\le |g_X|_\nu \sum_{j=0}^{k-1} 
\left|f^j(y)-f^j(x_0)\right|^\nu$.
Since $x_0$ is the indifferent  periodic point, 
$\sum_{j=1}^{k}\left|f^j(y)-f^j(x_0)\right|^\nu\ll \sum_{j=1}^{k} (k-j)^{-\beta\nu}\ll k^{1-\nu\beta}$ for all $1\le j\le k$. 
Hence, $\tilde g_Y(y)= C\varphi(y)+O(\varphi(y)^{1-\nu\beta})$. 

Since $\nu>1/\beta-1/2$,  $O(\varphi(y)^{-\nu\beta})=O(\varphi(y)^{-\beta(1/\beta-1/2)})=O(\varphi(y)^{-(1-\beta/2)})= o(1)$, where in the last equality we have used that
and $\beta<2$. Thus, $\mu_Y(|\tilde g_Y|>t)=C' \mu_Y(\varphi>t)(1+o(1))$, as $t\to\infty$. 
This gives the statement on $\tilde g_Y$ and, thus, on $g_Y$ in the case $C>0$.
The case $C<0$ goes similarly.~\end{proof}

Given the already introduce terminology, here we obtain Theorem~\ref{thm-infst} ii) for larger classes of sequences. More precisely, similar to~\eqref{eq:def-delta},  for $x > 0$ (large)  define 
\begin{equation}
\label{eq:gamma2}
\gamma_x=\frac{x}{k_n}, \quad \text{ where } k_{n-1} <  x \leq k_n.
\end{equation}
As explained below, given  $\gamma_x$ as in \eqref{eq:gamma2}, whenever $\gamma_{q_r} \stackrel{cir}{\to} \lambda\in (c^{-1}, 1]$,

\begin{align}
\label{eq-larger}
\frac{\sum_{j=0}^{q_r-1} v\circ f_{M_2}^j}{(q_r)^{1/\alpha}}\overset{d}{\longrightarrow} V(\alpha, \lambda),\mbox{ as } r\to\infty,
\end{align} where $V_\lambda$ is as in~\eqref{eq-conv-subseq}.

\begin{pfof}{Theorem~\ref{thm-infst}} For $f_{M_1}$, the abstract assumptions (H0)--(H3) are verified in Subsections~\ref{subsec-ver} and~\ref{sec.h3-check}.
Thus, Theorem~\ref{prop-main} applies to $f_{M_1}, f_{M_2}$ with  norming sequence $(k_n)^{1/\alpha}$ giving also the additional  result~\eqref{eq-larger}.
We note that the statement on the non trivial distribution
limit for $f_{M_1}$ is along some sequence $m_r$ and no identification of $m_r$ is claimed.
 \end{pfof}

\section*{Acknowledgement}
D. Coates was supported by an EPSRC studentship. M. Holland 
was partially supported from the EPSRC, no. EP/P034489/1. In the later stages 
of preparation of this paper D. Terhesiu 
was partially supported by the EPSRC, no. EP/S019286/1.
We thank the referee for useful remarks that helped us improve several explanations.

\bibliographystyle{abbrv}
\bibliography{wobbly}

\end{document}